\documentclass[12pt,reqno,twoside]{amsart}
\usepackage{amsmath, amssymb,latexsym}
\usepackage[all,cmtip]{xy}

\newtheorem{thm}{Theorem}[section]
\newtheorem{lem}[thm]{Lemma}
\newtheorem{prop}[thm]{Proposition}
\newtheorem{cor}[thm]{Corollary}
\newtheorem{defn}[thm]{Definition}

\newtheorem{question}{Question}
\newtheorem{obs}[thm]{Observation}

\numberwithin{equation}{section}


\newcommand{\dom}{{\text{dom}}}
\newcommand{\ran}{{\text{ran}}}

\newcommand{\N}{{\mathbb{N}}}






\begin{document}
\title[Density-1 Degrees]{The Generic Degrees of Density-1 Sets, and a Characterization of the Hyperarithmetic Reals}
\author{Gregory Igusa}

\maketitle

\begin{abstract}

 A generic computation of a subset $A$ of $\mathbb{N}$ is a computation which correctly computes most of the bits of $A$, but which potentially does not halt on all inputs. The motivation for this concept is derived from complexity theory, where it has been noticed that frequently, it is more important to know how difficult a type of problem is in the general case than how difficult it is in the worst case. When we study this concept from a recursion theoretic point of view, to create a transitive relationship, we are forced to consider oracles that sometimes fail to give answers when asked questions. Unfortunately, this makes working in the generic degrees quite difficult. Indeed, we show that generic reduction is $\mathbf\Pi^1_1-$complete. To help avoid this difficulty, we work with the generic degrees of density-1 reals. We demonstrate how an understanding of these degrees leads to a greater understanding of the overall structure of the generic degrees, and we also use these density-1 sets to provide a new a characterization of the hyperartithmetical Turing degrees.
   
\end{abstract}

\section{Introduction}

In complexity theory, there has been recent work attempting to rigorously understand and study the phenomenon in which a problem might be known to have a very high complexity in the traditional sense, and yet still be very easy to solve in practice. To this end, distinctions are made between the worst-case complexity of the problem, which is the usual way to measure the complexity of a problem, the average-case complexity of the problem \cite{Gur}, which measures the expected amount of time to solve the problem, and the generic-case complexity of the problem \cite{Kap}, a measure of how complex the majority of the instances of the problem are.

The study of generic-case complexity has led to the interesting realization that it is sometimes possible to find the generic-case complexity of a problem that is not even solvable. For instance, the word problem for Boone's group is known to be unsolvable, yet it can be shown to be generically linear time solvable \cite{Kap}. This sort of behavior allows for a complexity theoretic analysis of questions which had previously been outside of the scope of complexity theory. Simultaneously, however, it calls to light the recursion theoretic question: what can be said about the generically computable sets, and about generic computation in general?

Following the notation of Jockusch and Schupp \cite{stuff} we make the following definitions:

\begin{defn}\label{real}
\rm Let $A$ be a subset of the natural numbers. Then $A$ has \it{density 1} \rm if the limit of the densities of its initial segments is 1, or in other words, if $\lim_{n\rightarrow\infty}\frac{|{A\cap n}|}{n}=1$. In this case, we will frequently say that $A$ is \it{density-1.} \rm
\end{defn}

The notation in this paper follows the heuristics of \cite{Soa}, but notation will be defined as it is introduced. A subset of the natural numbers is often referred to as a real. Note that the intersection of two reals is density-1 if and only if each of the reals is density-1. (For any $\epsilon>0$, once the densities of the initial segments of each of the reals is $>1-\frac\epsilon2$, the density of their intersection will be $>1-\epsilon$.) We will sometimes use function notation for reals, in which case we say that $A(n)=1$ if $n\in A$, and $A(n)=0$ if $n\notin A$. In longer proofs, parenthetical comments will frequently be used to provide short proofs of claims in order to help illustrate the structure of the proofs.

\begin{defn}
\rm A real $A$ is \it{generically computable} \rm if there exists a partial recursive function $\varphi$ with the following properties:

\begin{itemize}
\item
$\dom(\varphi)$ is density-1, 
\item
$\ran(\varphi)\subseteq\lbrace 0,1\rbrace$,
\item
$\varphi(n)=A(n)$, for all $n\in\dom(\varphi)$.

\end{itemize}

\end{defn}

Note that this is very different from the following concept.

\begin{defn}
\rm A real $A$ is \it{coarsely computable} \rm if there exists a total recursive function $\varphi$, whose range is contained in $\lbrace 0,1\rbrace$ such that 
$\lbrace n\, |\, \varphi(n)=A(n)\rbrace$ is density-1.

\end{defn}

Thus, a generic computation is a computation which never makes mistakes, but which occasionally does not give answers, while a coarse computation is a computation which always gives answers, but sometimes makes mistakes. Neither generic computability nor coarse computability implies the other \cite{stuff}. The focus of this paper will be on generic computation (we remind the reader that the motivation for generic computation is algorithms that, in practice, run faster than they otherwise should be able to, not algorithms that take shortcuts and are occasionally inaccurate in order to get answers more quickly.) Coarse computation is only presented here to disambiguate, and also because we will find that the coarsely computable reals have a number of interesting properties from the point of view of generic computation.

Now, we wish to work our way up to generic degrees, and for this reason, we first present relativized generic computation.

\begin{defn}
\rm For reals $A$ and $B$, $A$ is \it{generically B-computable} \rm if $A$ is generically computable using $B$ as an oracle. In this case, we frequently say B \it{generically computes} \rm A.
\end{defn}

Notice, however, that this relativized notion of generic computation is very far from transitive, since information can be ``hidden" in a real in a way that causes it to have a large amount of computing power.

\begin{obs}

There exist reals $A$, $B$, and $C$ such that $B$ generically computes $A$, $C$ generically computes $B$, but $C$ does not generically compute $A$.

\end{obs}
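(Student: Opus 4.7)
The plan is to exploit the fact that information coded on a density-$0$ set can be Turing-recovered from an oracle yet be completely invisible to a generic computation, since a generic computation is allowed to skip over the sparse coding locations. Concretely, I would fix a real $A$ that is not generically computable (such reals exist: by a straightforward diagonalization over partial recursive $\{0,1\}$-valued functions with density-$1$ domain, one can build a real that disagrees with every such $\varphi$ on at least one input in $\dom(\varphi)$; Jockusch and Schupp \cite{stuff} give explicit examples). Define
\[
B(n) = \begin{cases} A(k) & \text{if } n = 2^k,\\ 0 & \text{otherwise,}\end{cases}
\]
and let $C = \emptyset$ (any computable set works).

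Next I would verify the three required properties in turn. First, $B$ generically computes $A$: the total recursive $B$-functional $k \mapsto B(2^k)$ equals $A(k)$ everywhere, so its domain $\N$ is trivially density-$1$. Second, $C$ generically computes $B$: the set $S = \{2^k : k \in \N\}$ has density $0$, so $\N \setminus S$ has density $1$, and the constant-$0$ total recursive function agrees with $B$ on $\N \setminus S$. Third, $C$ does not generically compute $A$: since $C$ is computable, any partial $C$-recursive functional is partial recursive, so if $C$ generically computed $A$ then $A$ itself would be generically computable, contradicting the choice of $A$.

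The only nontrivial ingredient is the existence of a non-generically-computable real, which I expect to be the main conceptual point rather than a genuine obstacle, since it is an easy diagonalization (or a citation to \cite{stuff}). The rest of the argument is the by-now-standard observation that sparse coding breaks transitivity: $B$ knows $A$ bit-for-bit on the sparse set $S$, but a generic reduction from $C$ to $B$ has no way to access the values of $B$ on $S$, so the $A$-information that lives entirely on $S$ is lost in the passage from $B$ to $C$.
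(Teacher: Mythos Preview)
Your approach is essentially identical to the paper's: code $A$ into $B$ on the powers of $2$, take $C=\emptyset$, and observe that $C$ can generically compute $B$ by ignoring the sparse coding set while $B$ Turing-computes $A$ outright.

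One slip worth flagging: in your second verification you write that ``the constant-$0$ total recursive function agrees with $B$ on $\N\setminus S$.'' But a generic computation must be correct \emph{everywhere it halts}, so a \emph{total} constant-$0$ function would err on any $2^k$ with $A(k)=1$; what you have described there is a coarse computation, not a generic one. You want the \emph{partial} recursive function that outputs $0$ on non-powers of $2$ and diverges on powers of $2$ --- exactly what the paper uses, and exactly what your own first paragraph (``a generic computation is allowed to skip over the sparse coding locations'') correctly anticipates. With that correction the argument is complete and matches the paper's.
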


\begin{proof}

Let $A$ be any real that is not generically computable. (There exists such a real because every $\varphi$ is a generic computation of at most Lebesgue measure zero many reals, and there are only countably many partial recursive functions.)

Let $B$ be the real such that $2^n\in B\Leftrightarrow n\in A$ (and $m\notin B$ if $m$ is not a power of 2.)

Let $C=0$, the empty set.

Then, $B$ generically computes $A$ because $B$ computes $A$, and a computation is also a generic computation. Also, $C$ generically computes $B$ by the algorithm $\varphi$ where $\varphi(m)=0$ if $m$ is not a power of 2. ($\varphi$ does not reference its oracle, and also $\varphi(m)$ does not halt if $m$ is a power of 2.)

Finally, $C$ does not generically compute $A$, because $A$ is not generically computable.

\end{proof}

Now, we introduce generic reduction, which is a notion of relative generic computation that has been modified to make it transitive. This will allow us to discuss the degree structure of the generic degrees, and will give us a deeper understanding of what it means to generically compute something, since now the difficulty in generically computing a real will be precisely measured by its generic computation power. We will also see in Observation \ref{compisred} that the entire theory of relative generic computation can be discussed within the structure of generic reduction, so we do not lose anything.

The basic idea of a generic reduction will be that a generic reduction from $A$ to $B$ is an algorithm that can use any generic oracle for $B$ to generically compute $A$. A generic oracle for $B$ is an oracle that does not always respond when asked a question, but that responds (always correctly) to density-1 many questions about $B$. For this reason, we first define what it means for a Turing machine to work with a partial oracle.

\begin{defn}

\normalfont

Let $A$ be a real. Then a (time-dependent) \emph{partial oracle,} $(A)$, for $A$ is a set of ordered triples $\langle n,x,l\rangle$ such that:

$\exists l\big(\langle n,0,l\rangle\in (A)\big)\Longrightarrow n\notin A$,

$\exists l\big(\langle n,1,l\rangle\in (A)\big)\Longrightarrow n\in A$.

\end{defn}

The idea here is that $(A)$ is a partial oracle that, when asked a question about $A$, sometimes takes a while before it responds, and does not always respond. Thus, to ``ask" $(A)$ whether or not $n\in A$ is to search $(A)$ for some $x,l$ such that $\langle n,x,l\rangle\in(A)$. Here, $x$ is thought of as the answer that $(A)$ gives, and $l$ is thought of as the amount of time before it gives an answer.

If such $x,l$ exist for $n$, we say that the oracle halts on $n$, i.e. $(A)(n)\downarrow$, and that the output of $(A)$ on $n$ is $x$; otherwise, we say that it does not halt on $n$, or $(A)(n)\uparrow$. Also, after querying the oracle (initiating a search for some such $x,l$) our reductions will be able to do other things while waiting for the oracle to respond (while running the search in parallel to other processes.) The ``domain" of a partial oracle, written $\dom((A))$, is the set of $n$ such that $(A)(n)\downarrow$.

The words ``time-dependent" in the definition refer to the fact that the original definition \cite{stuff} is different in that it does not use an $l$ parameter, and the eventual definition of a generic reduction uses enumeration reductions rather than Turing reductions. The two can be proven to be equivalent in our setting \cite{I}. We use this definition so that generic reductions can be formalized using Turing reductions, rather than enumeration reductions. This choice streamlines the proofs of several of our theorems, but it will force us to prove Lemma \ref{moreismore}, which is obtained for free with the other definition.

\begin{defn}

\normalfont

Let $A$ be a real. Then a \emph{generic oracle,} $(A)$, for $A$ is a partial oracle for $A$ such that $\dom((A))$ is density-1.

\end{defn}

Note then the following.

\begin{obs}\label{oraclesarecomputations}

$B$ generically computes $A$ if and only if $B$ computes a generic oracle for $A$. 

\end{obs}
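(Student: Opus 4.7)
The plan is to prove both directions by straightforward translation, since each formalism encodes essentially the same data.

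For the forward direction, suppose $B$ generically computes $A$ via a $B$-partial recursive function $\varphi$ with $\dom(\varphi^B)$ density-1, $\ran(\varphi^B)\subseteq\{0,1\}$, and $\varphi^B(n)=A(n)$ whenever defined. I would define the partial oracle
\[
(A)=\{\langle n,x,l\rangle : \varphi^B(n) \text{ halts in at most } l \text{ steps with output } x\}.
\]
This set is $B$-decidable: to test membership of a triple, simulate $\varphi^B(n)$ for $l$ steps and compare the output to $x$. The correctness condition $\varphi^B(n)=A(n)$ on $\dom(\varphi^B)$ immediately yields the two implications in the definition of a partial oracle, and $\dom((A))=\dom(\varphi^B)$ is density-1, so $(A)$ is a generic oracle for $A$ that $B$ computes.

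For the converse, suppose $B$ computes a generic oracle $(A)$ for $A$. Define a $B$-partial recursive $\varphi$ by setting $\varphi^B(n)=x$, where $\langle n,x,l\rangle$ is the first triple (under any fixed effective search) found in $(A)$ whose first coordinate is $n$; if no such triple exists, let $\varphi^B(n)\uparrow$. The search is $B$-recursive because $(A)$ is $B$-decidable. The partial oracle conditions force $x=A(n)$ whenever the search succeeds, the range is contained in $\{0,1\}$, and $\dom(\varphi^B)=\dom((A))$ is density-1. Thus $\varphi$ witnesses that $B$ generically computes $A$.

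The only point requiring mild care is the reading of ``$B$ computes $(A)$'' as $B$-decidability of the set of triples; under this reading both translations go through verbatim. I do not anticipate any genuine obstacle, as the observation amounts to the remark that the triple-encoded generic oracle and the $\{0,1\}$-valued partial recursive function are uniformly interconvertible given $B$.
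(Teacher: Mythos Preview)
Your proof is correct and matches the paper's own argument essentially verbatim: both directions are handled by the same translations you give, with the forward direction defining $(A)$ via the run-time of $\varphi^B$ and the converse searching $(A)$ for a triple with first coordinate $n$. The only cosmetic difference is that the paper uses ``halts in $l$ steps'' rather than ``at most $l$ steps,'' which is immaterial.
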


\begin{proof}
If $\varphi^B$ is a generic computation of $A$, then we can let $\langle n,x,l\rangle\in(A)$ if and only if $\varphi^B(n)$ halts in $l$ steps, with value $x$. Conversely, if $B$ computes $(A)$ for some generic oracle, $(A)$, for $A$, then $B$ can generically compute $A$ by $\varphi^B(n)=x$ where $x$ is the first $x$ found such that there exists an $l$ with $\langle n,x,l\rangle\in(A)$. This algorithm halts on density-1, and gives only correct answers, since $(A)$ is a generic oracle for $A$.

\end{proof}

\begin{defn}
\normalfont
Let $A$ and $B$ be reals. Then, $A$ is \emph{generically reducible} to $B$ if there exists a Turing functional $\varphi$ such that for every generic oracle, $(B)$, for $B$, $\varphi^{(B)}$ is a generic computation of $A$. In this case, we write $B\geq_gA$.

\end{defn}

Here, it is important to note that a generic reduction is given by a uniform way to go from a generic oracle for $B$ to a generic computation of $A$. (In other words, the same reduction $\varphi$ must work for every generic oracle $(B)$.) If we relax this condition, and allow $\varphi$ to depend on $(B)$, then we get another, provably distinct, transitive notion of reducibility \cite{I}. Non-uniform generic reduction is somewhat more difficult to work with, and in this paper, we will focus solely on uniform generic reduction.

Note that the proof of Observation \ref{oraclesarecomputations} is uniform, which allows us to conclude that $\geq_g$ is a transitive relationship:

\begin{lem}\label{transitivity}
If $C\geq_gB$ and $B\geq_gA$, then $C\geq_gA$
\end{lem}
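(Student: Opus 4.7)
The plan is to compose the two reductions in the natural way, using the uniform version of Observation \ref{oraclesarecomputations} to convert a generic computation back into a generic oracle. Let $\rho$ be a Turing functional witnessing $C\geq_g B$, so that $\rho^{(C)}$ is a generic computation of $B$ for every generic oracle $(C)$ for $C$, and let $\psi$ be a Turing functional witnessing $B\geq_g A$, so that $\psi^{(B)}$ is a generic computation of $A$ for every generic oracle $(B)$ for $B$. I want to build a single Turing functional $\chi$ such that $\chi^{(C)}$ is a generic computation of $A$ whenever $(C)$ is a generic oracle for $C$.

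Given any generic oracle $(C)$ for $C$, I would first manufacture a generic oracle $(B)$ for $B$ uniformly from $(C)$. Following the construction in the proof of Observation \ref{oraclesarecomputations}, set
\[
(B) \;=\; \{\langle n,x,l\rangle : \rho^{(C)}(n)\text{ halts in }l\text{ steps with output }x\}.
\]
Since $\rho^{(C)}$ is a generic computation of $B$, every value it outputs is correct, so $(B)$ satisfies the correctness clauses of a partial oracle; and since $\rho^{(C)}$ halts on a density-1 set, $\dom((B))$ is density-1, so $(B)$ is in fact a generic oracle for $B$. I would then define $\chi^{(C)}$ to simulate $\psi^{(B)}$, resolving each query that $\psi$ makes to its oracle by searching, in parallel with all other processes, for a triple $\langle n,x,l\rangle$ in the enumeration above; whenever such a triple is found, $\psi$ receives the value $x$ as its oracle response. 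By choice of $\psi$, the resulting function $\chi^{(C)}$ is a generic computation of $A$, which gives $C\geq_g A$.

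The only delicate point, and the reason the comment preceding the lemma flags uniformity, is that $\chi$ must be a single Turing functional that works for every admissible $(C)$. This is exactly what we get: the enumeration defining $(B)$ from $(C)$ depends only on the index of $\rho$, not on the particular oracle; and the simulation of $\psi^{(B)}$ inside $\chi^{(C)}$ depends only on the index of $\psi$. So $\chi$ can be obtained by an effective, index-level composition of $\rho$ and $\psi$, making it the same functional for all $(C)$. With uniformity in place, the two generic-reduction hypotheses chain together with no further work.
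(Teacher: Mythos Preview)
Your proof is correct and follows essentially the same approach as the paper: use the uniform content of Observation~\ref{oraclesarecomputations} to turn the generic computation of $B$ (obtained from any $(C)$ via the first reduction) into a generic oracle $(B)$, and then apply the second reduction. Your version spells out the construction of $(B)$ and the uniformity more explicitly, but the argument is the same.
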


\begin{proof}
Assume $C\geq_gB$ and $B\geq_gA$. Then, let $(C)$ be a generic oracle for $C$. By assumption, $(C)$ can be used uniformly to generically compute $B$. By the uniformity of the proof of  Observation \ref{oraclesarecomputations}, $(C)$ can also be used to uniformly compute a generic oracle, $(B)$, for $B$, and, by assumption, $(B)$ can be used uniformly to generically compute $A$.

(The specific $(B)$ that is computed will depend on $(C)$, but any $(B)$ is sufficient to generically compute $A$.)
\end{proof}

With transitivity, we may now define the generic degrees in the standard manner.

\begin{defn}
\normalfont
The \emph{generic degrees} are the equivalence classes of reals under the relation $(A\equiv_gB)\leftrightarrow (A\geq_gB\wedge A\leq_gB)$.

As usual, the pre-ordering $\leq_g$ on reals  induces a partial ordering $\leq_g$ on the generic degrees:

${\bf a}\geq_g{\bf b}\leftrightarrow A\geq_gB$ where $A$ and $B$ are elements of $\bf a$, and $\bf b$ respectively.

\end{defn}

Next, we show that the usual join operation induces a degree-theoretic join in the generic degrees.

\begin{lem}\label{oplus_g}

Let $A$ and $B$ be reals. Let $A\oplus B$ be the real given by

\noindent$(2n\in A\oplus B \leftrightarrow n\in A)$ and $(2n+1\in A\oplus B \leftrightarrow n\in B)$.

Then, $A\oplus B\geq_gA$, $A\oplus B\geq_gB$, and, $\forall C$, if $C\geq_gA$ and $C\geq_gB$ then $C\geq_gA\oplus B$.

\end{lem}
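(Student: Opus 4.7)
The plan is to verify each of the three conclusions by a direct translation argument, with the only nontrivial ingredient being the observation that density-$1$ is preserved under the maps $n\mapsto 2n$ and $n\mapsto 2n+1$.

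First I would establish this density fact as a small lemma in the margin of the proof: if $D\subseteq\N$ has density $1$, then both $E_0\df\{n:2n\in D\}$ and $E_1\df\{n:2n+1\in D\}$ have density $1$. Indeed, $N-|E_0\cap N|=|\{n<N:2n\notin D\}|\le|\overline D\cap 2N|$, and since $|\overline D\cap 2N|/(2N)\to 0$, we get $|E_0\cap N|/N\to 1$; the case of $E_1$ is identical. This is the step I expect to be the most tedious to formalize, but it is elementary and poses no real obstacle.

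For $A\oplus B\ge_g A$, I would define a Turing functional $\varphi$ which on input $n$ queries its oracle at $2n$ and outputs whatever value (if any) the oracle returns. Given any generic oracle $(A\oplus B)$ for $A\oplus B$, $\varphi^{(A\oplus B)}(n)$ halts exactly when $(A\oplus B)(2n)\!\!\downarrow$, and by construction of $A\oplus B$ the answer it gives is correct for $A(n)$. The density lemma applied to $D=\dom((A\oplus B))$ shows $\dom(\varphi^{(A\oplus B)})$ is density-$1$, so $\varphi^{(A\oplus B)}$ is a generic computation of $A$, uniformly in $(A\oplus B)$. The argument for $A\oplus B\ge_g B$ is identical with $2n$ replaced by $2n+1$.

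For the join property, suppose $\varphi_A$ and $\varphi_B$ are Turing functionals witnessing $C\ge_g A$ and $C\ge_g B$ respectively. Define $\psi$ by
\[
\psi^{(C)}(m)=\begin{cases}\varphi_A^{(C)}(n)&\text{if }m=2n,\\ \varphi_B^{(C)}(n)&\text{if }m=2n+1.\end{cases}
\]
Given any generic oracle $(C)$ for $C$, the sets $D_A=\dom(\varphi_A^{(C)})$ and $D_B=\dom(\varphi_B^{(C)})$ are both density-$1$ in $\N$, and $\dom(\psi^{(C)})=\{2n:n\in D_A\}\cup\{2n+1:n\in D_B\}$. Splitting $\N$ into even and odd parts shows directly that this union has density $1$, and on its domain $\psi^{(C)}$ agrees with $A\oplus B$ by construction. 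Hence $\psi$ witnesses $C\ge_g A\oplus B$, uniformly in $(C)$, completing the proof.
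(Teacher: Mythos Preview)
Your proposal is correct and follows essentially the same approach as the paper: both reduce the lemma to the fact that a set is density-$1$ if and only if its even half and odd half are each density-$1$ (in the evens and odds respectively), and then observe that the obvious translation functionals work. In fact your write-up is more detailed than the paper's own proof, which merely states the density fact, calls it a ``straightforward, but somewhat long limit calculation,'' and omits it.
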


\begin{proof}
The basic idea of the proof is that a subset of $\N$ is density-1 if and only if its even elements are density-1 in the even numbers, and its odd elements are density-1 in the odd numbers. The proof of this fact is a straightforward, but somewhat long limit calculation, and will be omitted.

From this, we deduce that it is easy to go from a generic oracle for $A\oplus B$ to one for each of $A$, and $B$, and vice versa.

\end{proof}

In terms of understanding the generic degrees, one of the most important features is that the Turing degrees embed naturally into the generic degrees.

\begin{defn}
\normalfont
For any real $X$, $\mathcal{R}(X)$ is the real defined by:

$\mathcal{R}(X)=\lbrace 2^mk\, |\, m\in X,k\in\N, k \text{ is odd}\rbrace$. 
\end{defn}

In other words, $n\in\mathcal{R}(X)$ if and only if $m\in X$, where $2^m$ is the largest power of $2$ dividing $n$.

\begin{prop}\label{R}
The map $X\mapsto\mathcal{R}(X)$ induces an embedding from the Turing degrees to the generic degrees.
\end{prop}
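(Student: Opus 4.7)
The plan is to prove that $X \leq_T Y$ if and only if $\mathcal{R}(X) \leq_g \mathcal{R}(Y)$, which simultaneously gives well-definedness, order-preservation, and injectivity of the map $X \mapsto \mathcal{R}(X)$ on Turing degrees, and hence an order embedding. The key combinatorial observation is that $\mathcal{R}(X)$ codes each bit $X(m)$ redundantly by $\mathcal{R}(X)(2^m k) = X(m)$ for every odd $k$, and the set $S_m := \{2^m k : k \text{ odd}\}$ has positive density $2^{-m-1}$ in $\N$. Consequently, any density-$1$ set $D \subseteq \N$ intersects $S_m$ in a set of density $1$ within $S_m$: since $S_m \setminus D \subseteq \N \setminus D$ has density $0$ in $\N$ and $S_m$ has positive density, the ratio $|(S_m \setminus D) \cap [0,n]|/|S_m \cap [0,n]|$ tends to $0$.

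For the forward direction, given a Turing reduction $\psi^Y = X$, I build a functional $\varphi$ that on input $n = 2^m k$ (with $k$ odd) simulates $\psi(m)$: whenever $\psi$ queries $Y(m')$, $\varphi$ launches parallel queries $(\mathcal{R}(Y))(2^{m'}), (\mathcal{R}(Y))(3 \cdot 2^{m'}), (\mathcal{R}(Y))(5 \cdot 2^{m'}), \ldots$ and uses the first value returned, which necessarily equals $Y(m')$. By the density observation, every such search terminates against every generic oracle $(\mathcal{R}(Y))$, so $\varphi^{(\mathcal{R}(Y))}$ is a total, correct computation of $\mathcal{R}(X)$, hence in particular a generic one. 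This step is essentially a parallel-search emulation of the given Turing reduction.

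For the reverse direction, suppose $\mathcal{R}(X) \leq_g \mathcal{R}(Y)$ via a functional $\varphi$. From $Y$ one totally computes $\mathcal{R}(Y)$, and the list $\{\langle n, \mathcal{R}(Y)(n), 0\rangle : n \in \N\}$ is a $Y$-recursive generic oracle $(\mathcal{R}(Y))$ with domain all of $\N$. Then $\varphi^{(\mathcal{R}(Y))}$ is a generic computation of $\mathcal{R}(X)$, so its halting set $D$ has density $1$, and the density observation gives that $D \cap S_m$ is infinite for every $m$; on any input $2^m k \in D$, the output is $X(m)$. Thus $Y$ computes $X(m)$ by dovetailing $\varphi^{(\mathcal{R}(Y))}$ on $2^m, 3 \cdot 2^m, 5 \cdot 2^m, \ldots$ until one halts. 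I expect this direction to be the main conceptual obstacle: one must recognize that the uniformity of the generic reduction forces $\varphi$ to cope with the very atypical ``all-answers-immediate'' oracle built from $Y$, and then invoke the positive density of $S_m$ to prevent $\varphi$ from being undefined on all $Y$-revealing inputs.
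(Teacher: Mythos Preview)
Your proposal is correct and follows essentially the same approach as the paper: both directions use the parallel search over the positive-density columns $S_m=\{2^m k:k\text{ odd}\}$, and your ``all-answers-immediate'' oracle is exactly the paper's ``oracle that halts everywhere.'' Your explicit density computation for $D\cap S_m$ is a detail the paper leaves implicit (it simply asserts the domain would fail to be density-1 otherwise), but the argument is the same.
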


The key idea of this proof is that every piece of information in $X$ is stretched out over a positive density set in $\mathcal{R}(X)$, and so any generic description of $\mathcal{R}(X)$ contains all of the information in $X$.

The proof of the embedding consists of a proof that having access to a generic oracle for $\mathcal{R}(X)$ is exactly the same as having access to an oracle for $X$, and likewise, that the task of generically computing $\mathcal{R}(X)$ is exactly as difficult as the task of computing $X$. This is the sense in which the embedding is ``natural."

\begin{proof}

If $Y\geq_TX$, then we may generically reduce $\mathcal{R}(X)$ to $\mathcal{R}(Y)$ by using a generic oracle for $\mathcal{R}(Y)$ to compute $Y$, and then using $Y$ to compute $X$, which we use to compute $\mathcal{R}(X)$.

Let $(B)$ be a generic oracle for $\mathcal{R}(Y)$. Then, we can use $(B)$ to compute $Y$ in the following way.

To determine if $m\in Y$ we search for a $k\in\mathbb N$ such that $(2k+1)2^m\in\mathcal{R}(Y)$. Our oracle, $(B)$ must eventually give an answer for some such $K$, because otherwise its domain would not be density-1. $(B)$ is not allowed to make any mistakes, and so that answer must also be the answer to whether or not $m\in Y$.

Thus, to generically compute $\mathcal{R}(X)$ from $(B)$, we use $(B)$ to compute $Y$, use $Y$ to compute $X$, and use $X$ to compute $\mathcal{R}(X)$. (Note that for our purposes, we only need to generically compute $\mathcal{R}(X)$, but computing $\mathcal{R}(X)$ is just as good.)

Conversely, if $\mathcal{R}(Y)\geq_g\mathcal{R}(X)$, then in particular, $Y$ generically computes $\mathcal{R}(X)$. This is because $Y$ can compute a generic oracle for $\mathcal{R}(Y)$ (namely, the oracle that halts everywhere), and any generic oracle $\mathcal{R}(Y)$ can generically compute $\mathcal{R}(X)$. So then, to use $Y$ to compute $X(m)$, we use our generic computation of $\mathcal{R}(X)$ to simultaneously attempt to compute, for each $k$, whether or not $(2k+1)2^m\in\mathcal{R}(X)$. The generic computation must halt on at least one of these, else the domain of the generic computation is not density-1. When we find some $k$ such that $\varphi^Y((2k+1)2^m)$ halts, then we use that output for our computation of $X(m)$.

\end{proof}

Note, in particular, that by using this embedding, we are able to study generic computation in terms of generic reduction.

\begin{obs}\label{compisred}

Let $A,B$ be reals. Then $\mathcal{R}(B)\geq_gA$ if and only if $B$ generically computes $A$.

\end{obs}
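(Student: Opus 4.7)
The plan is to split the biconditional into its two directions, each of which essentially repackages an argument already used in the proof of Proposition \ref{R}.

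For the $(\Rightarrow)$ direction, suppose $\mathcal{R}(B) \geq_g A$, witnessed by a Turing functional $\varphi$. The key observation is that $B$ itself can compute the ``total'' generic oracle $(\mathcal{R}(B))$ for $\mathcal{R}(B)$, namely the one that, on input $n$, simply computes $\mathcal{R}(B)(n)$ from $B$ (which is trivial: find the largest power of $2$ dividing $n$ and query $B$) and then outputs $\langle n, \mathcal{R}(B)(n), l\rangle$ for the corresponding $l$. This is a valid generic oracle with domain all of $\N$. By the definition of generic reduction, $\varphi$ applied to this $B$-computable oracle yields a generic computation of $A$, so $B$ generically computes $A$.

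For the $(\Leftarrow)$ direction, suppose $B$ generically computes $A$ via a partial recursive functional $\psi^B$. We need to exhibit a single Turing functional $\varphi$ such that $\varphi^{(\mathcal{R}(B))}$ is a generic computation of $A$ for every generic oracle $(\mathcal{R}(B))$. The construction mirrors the argument already given in Proposition \ref{R}: from any generic oracle for $\mathcal{R}(B)$ we can uniformly compute $B$, because to decide whether $m \in B$ we search in parallel over $k$ for an answer from the oracle on input $(2k+1)2^m$, and density-$1$ of the oracle's domain guarantees at least one such $k$ will eventually give an answer, which must be correct. Having computed $B$ in this uniform fashion, we simply run $\psi^B$ to obtain a generic computation of $A$.

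The main obstacle — such as it is — is making sure that in the $(\Leftarrow)$ direction the reduction is genuinely uniform in the generic oracle, and that the composition really does yield a partial computable function whose domain remains density-$1$. Uniformity is immediate from the description, since the search procedure used to decode $B$ from any $(\mathcal{R}(B))$ is the same fixed procedure. Density-$1$ of the final domain is inherited directly from $\psi^B$, because on every input where the search for $k$ succeeds we hand control to $\psi^B$ on the genuine oracle $B$, so the domain of $\varphi^{(\mathcal{R}(B))}$ equals $\mathrm{dom}(\psi^B)$, which is density-$1$ by hypothesis. Thus no delicate density calculation is required beyond what is already in Proposition \ref{R}.
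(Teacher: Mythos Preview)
Your proof is correct and follows essentially the same approach as the paper: for $(\Rightarrow)$ you use that $B$ computes the total generic oracle for $\mathcal{R}(B)$, and for $(\Leftarrow)$ you uniformly recover $B$ from any generic oracle for $\mathcal{R}(B)$ via the search procedure of Proposition~\ref{R}, then compose with the given generic computation $\psi^B$. One small phrasing point: in the $(\Leftarrow)$ direction you write ``on every input where the search for $k$ succeeds,'' but computing $\psi^B(n)$ may require several bits of $B$, hence several such searches; since each one succeeds, this is harmless, and your conclusion that $\mathrm{dom}(\varphi^{(\mathcal{R}(B))}) = \mathrm{dom}(\psi^B)$ stands.
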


\begin{proof}

If $\mathcal{R}(B)\geq_gA$ then $B$ generically computes $A$ because $B$ can compute a generic oracle for itself, which can be used to generically compute $A$.

Conversely, any generic oracle for $\mathcal{R}(B)$ can (uniformly) compute $B$ (as in the first half of the proof of Proposition \ref{R}), and so, if $B$ generically computes $A$, then any generic oracle for $\mathcal{R}(B)$ can generically compute $A$ by computing $B$, and then using the generic computation of $A$ from $B$.

\end{proof}

\section{Density-1 Reals}

\subsection{Introduction}

Now, to understand the generic degrees, we attempt to understand them from two standpoints. First, we attempt to answer basic degree-theoretic questions about them such as: Are there any minimal degrees? Are there minimal pairs of degrees? Does every nonzero degree have another degree that it is incomparable to? Second, we use the embedding of the Turing degrees to attempt to understand the generic degrees in terms of their relationship with the Turing degrees: How far up and how far down do the Turing degrees embed in the generic degrees? Can we understand concepts within the generic degrees in terms of known concepts within the Turing degrees?

For the question of ``how far up do the Turing degrees go?" the answer is straightforward, which is that they go all the way up.

\begin{obs}

For any real $A$, $\mathcal{R}(A)\geq_gA$. Thus, the embedded Turing degrees are upward dense in the generic degrees.

\end{obs}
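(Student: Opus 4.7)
The plan is to prove $\mathcal{R}(A)\geq_g A$ by exhibiting a single Turing functional $\varphi$ that, when supplied with \emph{any} generic oracle $(\mathcal{R}(A))$ for $\mathcal{R}(A)$, actually totally computes $A$ (and hence in particular generically computes it). This is essentially a repackaging of the first half of the proof of Proposition \ref{R}, so most of the work has already been done; the present observation just notes that when $B=A$ the two ingredients $\mathcal{R}(A)\geq_g\mathcal{R}(A)$ and $\mathcal{R}(A)\geq_g A$ coincide in content.

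Concretely, I would define $\varphi$ as follows: on input $m$, search in parallel through all pairs $(k,l)$ for a witness $\langle (2k+1)2^m, x, l\rangle\in(\mathcal{R}(A))$, and output the first such $x$ found. Correctness of the value is immediate from the definition of a partial oracle, because every integer of the form $(2k+1)2^m$ lies in $\mathcal{R}(A)$ if and only if $m\in A$, so whichever $k$ is located must give an $x$ that agrees with $A(m)$.

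The only point requiring argument is that the search halts for every $m$. For fixed $m$, the set $S_m=\{(2k+1)2^m:k\in\N\}$ of odd multiples of $2^m$ has density $2^{-(m+1)}>0$ in $\N$. If $\varphi^{(\mathcal{R}(A))}(m)$ failed to halt, then the oracle would diverge on all of $S_m$, so $\N\setminus\dom((\mathcal{R}(A)))$ would contain $S_m$ and hence have lower density at least $2^{-(m+1)}>0$, contradicting that $\dom((\mathcal{R}(A)))$ is density-1. So $\varphi^{(\mathcal{R}(A))}$ is total and equal to $A$, which is more than enough for a generic reduction. This establishes $\mathcal{R}(A)\geq_g A$, and since $\mathcal{R}(A)$ lies in an embedded Turing degree by Proposition \ref{R}, every generic degree has an embedded Turing degree above it, i.e.\ the embedded Turing degrees are upward dense (cofinal) in the generic degrees.

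There is no real obstacle here; the only thing to be careful about is the density bookkeeping in the preceding paragraph, namely that a \emph{positive}-density subset of $\N$ cannot be disjoint from a density-1 set. Everything else is a direct unwinding of the definitions of $\mathcal{R}$, generic oracle, and generic reduction.
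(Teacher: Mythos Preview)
Your proposal is correct and is exactly the paper's approach, just written out in full detail: the paper's one-line proof simply notes that any generic oracle for $\mathcal{R}(A)$ can uniformly compute $A$ (as established in the proof of Proposition~\ref{R}), and your argument is precisely the unwinding of that fact.
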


\begin{proof}
Any generic oracle for $\mathcal{R}(A)$ can be used uniformly to compute $A$, and so, in particular, to generically compute $A$.
\end{proof}

The study of how far down the Turing degrees go is, in some sense, the study of the quasi-minimal generic degrees.

\begin{defn}\label{quasiminimal}
\normalfont

A nontrivial generic degree, {\textbf b}, is \emph{quasi-minimal} if for every Turing degree {\textbf a}, if ${\textbf b}\geq_g\mathcal{R}({\textbf a})$, then ${\textbf a}=0$.

\end{defn}

In other words, a quasi-minimal generic degree is a generic degree that is not generically computable, but is also not above any non-zero embedded Turing degrees.

Quasi-minimal degrees will be discussed more thoroughly in Section 2.3, when we discuss minimal degrees and minimal pairs in the generic degrees.

One of the biggest goals of this paper is to discuss the generic degrees of the density-1 reals, and show how they relate to many questions concerning the generic degrees in general. To aid our discussion, we define density-1 generic degrees as generic degrees that have density-1 elements.

\begin{defn}
\normalfont

A generic degree ${\bf a}$ is \emph{density-1} if there is a density-1 real $A\in{\bf a}$.

\end{defn}

The original motivation for studying the density-1 reals is that they, in some sense, isolate the ``new" concept that needs to be addressed in the generic degrees: A generic oracle sometimes outputs ``1," sometimes outputs ``0," and sometimes does not halt. Furthermore, the ``information" in the generic oracle is somehow contained both in the outputs that it gives, and also in the places where it does not give outputs. The density-1 reals isolate this second form of information, since a generic oracle of a density-1 real can be assumed to never give any outputs other than ``1," and so can be thought of as having all of its information contained within its halting set.

The following lemma formalizes the concept that, from the point of view of generic reduction, the entire information content of a density-1 real can be captured using oracles and computations that only output ``yes" answers when they halt.

\begin{lem}\label{output1}

Let $A$ be a density-1 real, and $B$ be any real. Then, the following hold.

 $B\geq_gA$ if and only if there exists a Turing functional $\varphi$ which only outputs 1's (for any real $X$ and number $n$, if $\varphi^X(n)\downarrow$, then $\varphi^X(n)=1$), such that for any generic oracle $(B)$ for $B$, $\varphi^{(B)}$ is a generic computation of $A$.

Also, $A\geq_gB$ if and only if there exists a Turing functional $\varphi$ such that for any generic oracle $(A)$ for $A$, if $\dom((A))\subseteq A$, then $\varphi^{(A)}$ is a generic computation of $B$.

\end{lem}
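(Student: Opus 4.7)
The plan is to exploit the key observation that when $A$ is density-1, its complement has density $0$, so any generic oracle $(A)$ for $A$ can return the answer $0$ on at most a density-$0$ set (since it never answers incorrectly). This means we can freely suppress all ``0'' responses from any oracle or computation without losing density-$1$-ness, and both equivalences then follow by mechanical manipulation.

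For the first equivalence, the $(\Leftarrow)$ direction is immediate, since the specified $\varphi$ is itself a witness to $B\geq_gA$. For $(\Rightarrow)$, I would start from any functional $\psi$ witnessing $B\geq_gA$ and define $\varphi^X(n)$ to simulate $\psi^X(n)$, halting with output $1$ if $\psi^X(n)$ halts with output $1$, and never halting otherwise; this $\varphi$ only ever outputs $1$'s on any oracle. For any generic oracle $(B)$ for $B$, the computation $\psi^{(B)}$ has density-$1$ domain and only outputs correct values of $A$, and the set where $\psi^{(B)}(n)=0$ lies in the complement of $A$ and hence has density $0$. Thus $\dom(\varphi^{(B)})=\{n:\psi^{(B)}(n)\!\downarrow=1\}$ is still density-$1$, and $\varphi^{(B)}$ agrees with $A$ on its domain, so it is a generic computation of $A$.

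For the second equivalence, the $(\Rightarrow)$ direction is again immediate: any functional witnessing $A\geq_gB$ a fortiori satisfies the hypothesis, which only asks for correct behavior on the sub-class of generic oracles $(A)$ with $\dom((A))\subseteq A$. For $(\Leftarrow)$, given an arbitrary generic oracle $(A)$ for $A$, I would uniformly build a restricted oracle $(A')$ by dropping every triple whose middle coordinate is $0$; that is, $\langle n,x,l\rangle\in(A')$ iff $x=1$ and $\langle n,1,l\rangle\in(A)$. Then $(A')$ is a partial oracle for $A$ with $\dom((A'))\subseteq A$ by construction, and $\dom((A))\setminus\dom((A'))$ lies in the complement of $A$, which has density $0$; so $\dom((A'))$ is still density-$1$ and $(A')$ is a genuine generic oracle for $A$ satisfying the domain condition. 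Simulating $(A')$ from $(A)$ (each search for a triple in $(A')$ is run as a search in $(A)$, filtering out triples of the form $\langle n,0,l\rangle$) lets us compute $\varphi^{(A')}$ uniformly from $(A)$, giving a generic computation of $B$ and thus witnessing $A\geq_gB$.

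The proof contains no genuine obstacle; the only step requiring any care is the density bookkeeping, which in both directions reduces to the elementary fact (already recorded in the paper's introduction) that the intersection of two density-$1$ sets is density-$1$, together with the observation that the set of inputs on which a valid partial oracle outputs the wrong ``polarity'' for a density-$1$ set must itself have density $0$.
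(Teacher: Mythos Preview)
Your proof is correct and follows essentially the same approach as the paper: in the first equivalence you suppress all non-$1$ outputs of the witnessing functional, and in the second you filter out all $0$-answers from the oracle before feeding it to $\varphi$, in each case using that the discarded set lies inside the complement of $A$ and hence has density $0$. The paper phrases the density calculation as ``the domain is the intersection of $A$ with the domain of $\psi^{(B)}$'' rather than via the complement, but this is the same observation.
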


The primary content of this lemma is that a generic oracle or generic computation of a density-1 real, $A$, can, in some sense, be thought of as an enumeration of a density-1 subset of $A$. This idea will be formalized more thoroughly in Lemmas \ref{d1join} and \ref{d1subset}.

\begin{proof}
Let $A$ be a density-1 real, and assume $B\geq_gA$.

Fix $\psi$ such that for any generic oracle $(B)$ for $B$, $\psi^{(B)}$ is a generic computation of $A$. If we modify $\psi$, and define $\varphi$ so that $\varphi^X$ behaves exactly the same way as $\psi^X$, except that $\varphi^X(n)$ diverges whenever $\psi^X(n)\neq1$, then $\varphi$ will also have the property that for any $(B)$, $\varphi^{(B)}$ is a generic computation of $A$, because the outputs that it gives will still be correct ($\varphi$'s outputs are a subset of $\psi$'s outputs), and the domains of the computations will still be density-1, since for each $(B)$, the domain of $\varphi^{(B)}$ will be the intersection of $A$ with the domain of $\psi^{(B)}$.

Conversely, if there exists a $\varphi$ as in the statement of the observation, then that $\varphi$ is a generic reduction from $A$ to $B$.

For the second part of the observation, assume there exists a $\varphi$ that generically computes $B$ from any generic oracle for $A$ which outputs only 1's. Then it can be modified to generically compute $B$ from any generic oracle $(A)$ for $A$ in the following manner. $A$ is density-1, and $(A)$ is a generic oracle, and so the intersection of $A$ with the domain of $(A)$ must be density-1. So now, we define $\psi$ so that $\psi^X(n)=\varphi^Y(n)$ where $Y=X\setminus\lbrace\langle n,0,l\rangle\,|\,n,l\in\N\rbrace$. (Intuitively, $\psi$ is the computation that mimics $\varphi$, except that it ignores any locations where its oracle outputs 0.) Then, $\psi$ will generically compute $B$ from any generic oracle for $A$, because it modifies that oracle to a generic oracle that outputs only 1's, and then uses $\varphi$.

Again, the converse is true by definition: If $A\geq_gB$, then there exists a $\varphi$ which generically computes $B$ from any generic oracle for $A$. In particular, $\varphi$ generically computes $B$ from any generic oracle for $A$ that outputs only 1's.

\end{proof}

Note that for this result, we needed to use the fact that the intersection of two density-1 reals is density-1. We provided a short proof of this after Definition \ref{real}, but here, we will state and prove this observation clearly, since it will be very relevant to much of our work with density-1 reals.

\begin{obs}\label{int}
Let $A$ and $B$ be reals. Then $A\cap B$ is density-1 if and only if both $A$ and $B$ are density-1.
\end{obs}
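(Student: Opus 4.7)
The plan is to prove the two directions separately, using only elementary properties of counting on initial segments.

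For the forward direction, suppose $A\cap B$ is density-1. Since $A\cap B\subseteq A$ and $A\cap B\subseteq B$, for every $n$ we have $|A\cap B\cap n|\le |A\cap n|$ and $|A\cap B\cap n|\le |B\cap n|$. Dividing by $n$ gives
\[
\frac{|A\cap n|}{n}\ \ge\ \frac{|A\cap B\cap n|}{n},\qquad \frac{|B\cap n|}{n}\ \ge\ \frac{|A\cap B\cap n|}{n}.
\]
The right-hand side tends to $1$ by hypothesis, and the left-hand side is bounded above by $1$, so by the squeeze principle both $|A\cap n|/n$ and $|B\cap n|/n$ converge to $1$. Thus $A$ and $B$ are each density-1.

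For the backward direction, I would work with complements. Writing $\overline{A}$ for the complement of $A$ in $\mathbb{N}$, we have $\overline{A\cap B}=\overline{A}\cup\overline{B}$, so
\[
|\overline{A\cap B}\cap n|\ \le\ |\overline{A}\cap n|+|\overline{B}\cap n|.
\]
Fix $\varepsilon>0$. Since $A$ is density-1, there is $N_A$ such that $|\overline{A}\cap n|/n<\varepsilon/2$ for all $n\ge N_A$, and similarly there is $N_B$ for $B$. For all $n\ge\max(N_A,N_B)$ the displayed inequality gives $|\overline{A\cap B}\cap n|/n<\varepsilon$, so $|A\cap B\cap n|/n>1-\varepsilon$. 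Since $\varepsilon$ was arbitrary, $|A\cap B\cap n|/n\to 1$, which is exactly density-1 of $A\cap B$.

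There is no real obstacle here; the whole statement is a routine $\varepsilon$-level limit calculation, essentially just the fact that densities of complements are subadditive on initial segments. The only thing to be a little careful about is not conflating ``density-1'' (a statement about a limit existing and equaling $1$) with a uniform bound on the density of complements at every length; this is handled in the argument above by fixing an $\varepsilon$ and passing to large enough $n$.
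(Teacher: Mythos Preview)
Your proof is correct and is essentially the same as the paper's: the forward direction is the superset-of-density-1 argument, and your complement inequality $|\overline{A\cap B}\cap n|\le |\overline{A}\cap n|+|\overline{B}\cap n|$ is exactly the paper's observation that anything missing from $(A\cap B)\cap n$ is missing from $A\cap n$ or from $B\cap n$, with the same $\varepsilon/2$ split.
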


\begin{proof}
If $A\cap B$ is density-1, then each of $A$ and $B$ is density-1, because it is a superset of a density-1 set.

Conversely, fix some $\epsilon>0$. We must prove that there is some $m$ such that $\forall n>m\ \  \frac{|{(A\cap B)\cap n}|}{n}>1-\epsilon$.

If $A$ and $B$ are each density-1, then choose $m$ such that for every $n>m$, $\frac{|{A\cap n}|}{n}>1-\frac\epsilon2$ and $\frac{|{B\cap n}|}{n}>1-\frac\epsilon2$. This $m$ suffices for our purposes, since if anything is missing from $(A\cap B)\cap n$, then it is either missing from $A\cap n$, or from $B\cap n$.

\end{proof}

This allows us to prove a few results which will be useful to us, and will also help illustrate the manner in which density-1 reals are easier to work with than other reals.


\begin{lem}\label{d1sub}
Let $A$ and $B$ be density-1 reals. Assume $B\subseteq A$. Then $B\geq_gA$.
\end{lem}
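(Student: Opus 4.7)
The plan is to apply the first part of Lemma \ref{output1}, which says that since $A$ is density-1, to establish $B \geq_g A$ it suffices to produce a single Turing functional $\varphi$ that only outputs $1$'s, and such that $\varphi^{(B)}$ is a generic computation of $A$ for every generic oracle $(B)$ for $B$. The intuition is that a generic oracle for a density-1 set essentially enumerates a density-1 subset of that set; when $B \subseteq A$, any such subset of $B$ is automatically a density-1 subset of $A$, and so can be used to certify membership in $A$ by outputting $1$.

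Concretely, I would define $\varphi$ as follows: on input $n$, query the oracle about $n$, and output $1$ if and only if the oracle eventually returns the value $1$ on $n$; otherwise diverge. Formally, $\varphi^X(n) = 1$ if there exists $l$ with $\langle n, 1, l \rangle \in X$, and $\varphi^X(n)\uparrow$ otherwise.

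Now I would check the three requirements. Correctness is immediate: if $\varphi^{(B)}(n)\downarrow$, then $(B)$ has witnessed $n \in B$, which forces $n \in B \subseteq A$, so the output $1$ agrees with $A(n)$. The functional by construction outputs only $1$'s. The main point to verify is that $\dom(\varphi^{(B)})$ has density $1$. Since the oracle $(B)$ cannot lie, the set of $n$ on which $(B)$ halts with value $1$ is exactly $\dom((B)) \cap B$. Both $\dom((B))$ and $B$ are density-1, so by Observation \ref{int} their intersection is density-1, as required.

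There is no serious obstacle here; the proof is a direct use of the two tools already set up, namely Lemma \ref{output1} (which lets us restrict to ``output $1$ only'' functionals in the density-1 setting) and Observation \ref{int} (which lets us combine the two density-1 conditions). The conceptual content is just that nothing is hidden in the $0$-answers of the oracle once we know the underlying set is density-1 and sits inside $A$.
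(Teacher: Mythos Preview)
Your proof is correct and essentially the same as the paper's: define $\varphi$ to output $1$ on $n$ exactly when the oracle reports $n\in B$, observe correctness from $B\subseteq A$, and observe the domain is $B\cap\dom((B))$, which is density-1 by Observation~\ref{int}. The only cosmetic difference is that you route through Lemma~\ref{output1} explicitly, whereas the paper verifies the definition of $\geq_g$ directly; the content is identical.
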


\begin{proof}
Assume $B\subseteq A$.

Then $B\geq_gA$ via the algorithm $\varphi$ where $\varphi(n)=1$ if $n\in B$. This algorithm only gives correct outputs because $B\subseteq A$, and it halts on the intersection of $B$ with the domain of the generic oracle for $B$, which is density-1.

\end{proof}

\begin{lem}\label{d1join}
Let $A$ and $B$ be density-1 reals. Then $A\cap B\equiv_gA\oplus B$.

\end{lem}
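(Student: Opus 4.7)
The plan is to prove both directions of $\equiv_g$ separately, relying on the density-1 toolkit just assembled.

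For $A\cap B\geq_g A\oplus B$: first note that $A\cap B$ is density-1 by Observation \ref{int}. Since $A\cap B\subseteq A$ and both sides are density-1, Lemma \ref{d1sub} gives $A\cap B\geq_g A$; symmetrically $A\cap B\geq_g B$. The join property of Lemma \ref{oplus_g} then yields $A\cap B\geq_g A\oplus B$ immediately.

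For $A\oplus B\geq_g A\cap B$, I plan to exhibit an explicit generic reduction. Given a generic oracle $(A\oplus B)$, the first step is to reindex it to produce generic oracles $(A)$ and $(B)$ for $A$ and $B$ individually, by setting $\langle n,x,l\rangle\in(A)$ iff $\langle 2n,x,l\rangle\in(A\oplus B)$, and analogously for $(B)$ via $2n+1$; these are genuine generic oracles by the even/odd decomposition of density-1 already invoked in Lemma \ref{oplus_g}. The reduction $\varphi^{(A\oplus B)}(n)$ will then run $(A)(n)$ and $(B)(n)$ in parallel and output $1$ precisely when both halt with value $1$, diverging otherwise. Whenever it halts, its output is $1$ and $n\in A\cap B$, so it never errs. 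Its halting set equals $(A\cap\dom((A)))\cap(B\cap\dom((B)))$, which by repeated application of Observation \ref{int} to the four density-1 sets $A$, $B$, $\dom((A))$, $\dom((B))$ is density-1, giving the required generic computation.

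There is no serious obstacle here: both inequalities reduce to routine combinations of Observation \ref{int}, Lemma \ref{d1sub}, and Lemma \ref{oplus_g}, together with the bookkeeping above. The one point worth flagging is that my second reduction is naturally expressed by a functional that only ever outputs $1$, exactly as in the characterization of Lemma \ref{output1}: for density-1 reals, the information in a generic oracle lives in its halting pattern rather than in the values it returns, and the construction above is essentially a direct instantiation of that principle.
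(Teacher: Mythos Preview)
Your proposal is correct and follows essentially the same route as the paper's proof: both directions use Lemma \ref{d1sub} (plus the join property) for $A\cap B\geq_g A\oplus B$, and the explicit ``output $1$ when both halves say $1$'' reduction for the reverse inequality, with Observation \ref{int} guaranteeing the halting set is density-$1$. Your write-up is slightly more explicit about invoking Lemma \ref{oplus_g} and about decomposing the oracle into $(A)$ and $(B)$, but the underlying argument is identical.
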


\begin{proof}
To show that $A\cap B\geq_gA\oplus B$, we show that $A\cap B\geq_gA$, and that $A\cap B\geq_gB$. This is true by Lemma \ref{d1sub}.

Conversely, $A\oplus B\geq_gA\cap B$ by the algorithm $\varphi$ where $\varphi(n)=1$ if $2n$ and $2n+1$ are both in $A\oplus B$. (If we would like, we may also have $\varphi(n)=0$ if either $2n$ or $2n+1$ is not in $A\oplus B$, this would still be a correct generic reduction, but it is against the philosophy implicit in Lemma \ref{output1}.) If $(C)$ is a generic oracle for $A\oplus B$, then the domain of $\varphi^{(C)}$ is density-1 because it is the intersection of the domains of the natural generic computations of $A$ and of $B$ from $(C)$.

\end{proof}

\begin{lem}\label{d1subset}
Let {\textbf a} and {\textbf b} be density-1 generic degrees. Then ${\textbf b} \geq_g {\textbf a}$ if and only if $\exists B\in {\textbf b}\ \exists A\in {\textbf a}$ such that $B$ and $A$ are both density-1, and $B\subseteq A$.
\end{lem}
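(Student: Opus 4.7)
The plan is to handle the two directions separately, with the backward direction being essentially immediate and the forward direction reducing to a clever choice of representatives.

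For the backward direction, if we are given density-1 representatives $B \in \mathbf{b}$ and $A \in \mathbf{a}$ with $B \subseteq A$, then Lemma \ref{d1sub} yields $B \geq_g A$ directly, hence $\mathbf{b} \geq_g \mathbf{a}$. Nothing further is needed.

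For the forward direction, assume $\mathbf{b} \geq_g \mathbf{a}$. Since both degrees are density-1, I will pick arbitrary density-1 representatives $B_0 \in \mathbf{b}$ and $A_0 \in \mathbf{a}$, so that $B_0 \geq_g A_0$. The idea is to leave $A := A_0$ alone and replace $B_0$ by a density-1 subset of $A_0$ that still sits in $\mathbf{b}$. The natural candidate is
\[
B := A_0 \cap B_0.
\]
Three things then need to be checked: (i) $B$ is density-1; (ii) $B \subseteq A$; (iii) $B \equiv_g B_0$. Item (i) follows from Observation \ref{int}, since both $A_0$ and $B_0$ are density-1; item (ii) is immediate from the definition of intersection.

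The only point requiring an argument is (iii), and this is where Lemmas \ref{d1join} and \ref{oplus_g} do the work. Because $A_0$ and $B_0$ are both density-1, Lemma \ref{d1join} gives $A_0 \cap B_0 \equiv_g A_0 \oplus B_0$. Applying Lemma \ref{oplus_g} with $C := B_0$, together with the hypothesis $B_0 \geq_g A_0$, yields $B_0 \geq_g A_0 \oplus B_0$; and the other inequality $A_0 \oplus B_0 \geq_g B_0$ is also in Lemma \ref{oplus_g}. Chaining these equivalences, $B = A_0 \cap B_0 \equiv_g A_0 \oplus B_0 \equiv_g B_0$, so $B \in \mathbf{b}$ as required.

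There is no real obstacle here: the entire argument is a short application of earlier lemmas once one sees that intersecting $B_0$ with $A_0$ is the right move. The only thing to be careful about is that the representatives chosen are in fact density-1, which is guaranteed by the definition of a density-1 generic degree, and that the equivalence $B \equiv_g B_0$ uses both directions of Lemma \ref{oplus_g}, not merely the join property.
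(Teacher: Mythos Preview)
Your proof is correct and follows essentially the same route as the paper: the backward direction is immediate from the subset lemma, and the forward direction replaces $B_0$ by $B_0\cap A_0$ and uses Lemma~\ref{d1join} together with the join properties to see that this intersection is still in $\mathbf{b}$. The only cosmetic difference is that the paper cites Lemma~\ref{d1join} for the backward implication while you cite Lemma~\ref{d1sub}, and you spell out the appeal to Lemma~\ref{oplus_g} a bit more explicitly than the paper does.
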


\begin{proof}

Assume $\exists B\in {\textbf b}\ \exists A\in {\textbf a}(B\subseteq A)$, with $B$ and $A$ both density-1. Then $B\geq_gA$ by Lemma \ref{d1join}.

Conversely, assume ${\textbf b} \geq_g {\textbf a}$, and let $B_0\in {\textbf b},A_0\in {\textbf a}$, with $B_0,A_0$ both density-1. Let $B=B_0\cap A_0$, and let $A=A_0$. Clearly, $B\subseteq A$. Also, by Lemma \ref{d1join}, $B\equiv_g A_0\oplus B_0$. Furthermore, $A_0\oplus B_0\equiv_g B_0$ because $B_0\geq_gA_0$. Thus, $B\in {\textbf b},A\in {\textbf a}$, and $B\subseteq A$.
\end{proof}


We use Lemma \ref{d1subset} to prove that the density-1 sets are dense in the generic degrees

\begin{prop}\label{denseisdense}
Let {\textbf a} and {\textbf b} be density-1 generic degrees. Assume ${\textbf b} >_g {\textbf a}$. Then there exists a density-1 degree {\textbf c} such that ${\textbf b} >_g {\textbf c} >_g {\textbf a}$.

\end{prop}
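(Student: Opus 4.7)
My plan is to apply Lemma \ref{d1subset} to fix density-1 representatives $B\in{\textbf b}$ and $A\in{\textbf a}$ with $B\subseteq A$, and then construct by a finite-extension argument a density-1 real $C$ with $B\subseteq C\subseteq A$ realizing a strictly intermediate generic degree. Since ${\textbf b}\neq{\textbf a}$, the set $X := A\setminus B$ must be infinite (else $A$ and $B$ agree off a finite set, forcing $A\equiv_g B$). For any $S\subseteq X$, writing $C:=B\cup S$ gives $B\subseteq C\subseteq A$; because both $X$ and $\N\setminus A$ have density $0$, $C$ is density-1 regardless of $S$, and Lemma \ref{d1sub} then yields ${\textbf b}\geq_g{\textbf c}\geq_g{\textbf a}$ automatically. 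The real work lies in choosing $S$ so that both inequalities are strict.

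I will build $S$ in stages to meet, for each index $e$, the requirements
\[R_e:\ \Phi_e\text{ does not witness }C\geq_g B,\qquad N_e:\ \Phi_e\text{ does not witness }A\geq_g C.\]
By Lemma \ref{output1} it suffices to consider $\Phi_e$ outputting only $1$s, and for $R_e$ only generic oracles $(C)$ with $\dom((C))\subseteq C$. The $N_e$-strategy is clean: if $\Phi_e$ were a generic reduction $A\to C$, then for every generic oracle $(A)$ the set $\dom(\Phi_e^{(A)})$ would be a density-1 subset of $C$. Were $\dom(\Phi_e^{(A)})\subseteq B$ for \emph{every} such $(A)$, then $\Phi_e$ would itself witness $A\geq_g B$, contradicting ${\textbf b}>_g{\textbf a}$. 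So some generic oracle $(A)'$ produces an $n_0\in \dom(\Phi_e^{(A)'})\cap X$; committing $n_0\notin S$ forces $n_0\notin C$, so $\Phi_e^{(A)'}$ halts outputting $1$ on an element outside $C$. This is a one-element commitment.

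The $R_e$-strategy is dual. Since $A\not\geq_g B$, the functional $\Phi_e$ cannot be a reduction when $C$ is replaced by the larger set $A$, so there is a generic oracle $(X)^\ast$ with $\dom((X)^\ast)\subseteq A$ for which $\Phi_e^{(X)^\ast}$ is not a generic computation of $B$. To make $(X)^\ast$ a valid $C$-oracle, so that its failure testifies to $\Phi_e$'s failure for our $C$, I must guarantee $S\supseteq\dom((X)^\ast)\cap X$. This set sits in the density-$0$ set $X$ but may be infinite, so the commitment is a standing promise that must be interwoven with the single-element exclusions demanded by the $N_e$-strategies. The main obstacle is precisely this interleaving: a higher-priority $R_e$-promise may force into $S$ the very element a lower-priority $N_e$ wanted to keep out. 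This is handled by a standard finite-injury priority argument: an $N_e$ whose witness is captured simply looks for a fresh witness, which must exist by the same analysis as above, since if no $n_0\in X$ outside the finitely many captured elements ever appeared in any $\dom(\Phi_e^{(A)})$, $\Phi_e$ would still witness $A\geq_g B$ modulo a finite set, again contradicting ${\textbf b}>_g{\textbf a}$. Each requirement is injured only finitely often, producing the desired density-1 $C$ with ${\textbf b}>_g{\textbf c}>_g{\textbf a}$.
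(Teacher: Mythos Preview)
Your overall plan matches the paper's: use Lemma \ref{d1subset} to get density-1 $B\subseteq A$ and build $C$ with $B\subseteq C\subseteq A$ diagonalizing against every candidate reduction in both directions. Your $N_e$-strategy is essentially the paper's even-stage strategy. The gap is in $R_e$.

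You correctly note that the $R_e$-commitment $\dom((X)^*)\cap X$ may be infinite, but then assert that a lower-priority $N_e$ can always find a fresh witness ``outside the finitely many captured elements.'' This is where the argument breaks: the captured set need not be finite. Nothing prevents the oracle $(X)^*$ chosen for a single higher-priority $R_j$ from having $\dom((X)^*)=A$, which commits \emph{all} of $X$ into $S$ and leaves $N_e$ with no witness whatsoever. Your fallback claim---that if every $n_0\in X$ outside the captured set stayed out of every $\dom(\Phi_e^{(A)})$ then $\Phi_e$ would witness $A\geq_g B$---also fails once the captured set $Y$ is infinite: you would only get $\dom(\Phi_e^{(A)})\subseteq B\cup Y$, which says nothing about $A\geq_g B$. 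So the appeal to finite injury is unjustified; an $R_j$-promise is not a finite restraint that can be outwaited.

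The paper closes this gap with a case split that keeps every $R_e$-commitment genuinely finite. At the odd stage it first asks whether \emph{some} generic oracle $(B)$ for $B$ with $\dom((B))\subseteq B$ already makes $\Phi_e^{(B)}$ fail as a generic computation of $B$. If so, do nothing: since $B\subseteq C$ always, that $(B)$ is automatically a generic oracle for $C$, and $\Phi_e$ is defeated with zero commitment. If not, then (restricting via Lemma \ref{moreismore} to more-is-more functionals) every generic oracle for $A$ with domain $\subseteq A$ contains a generic $B$-oracle, forcing $\dom(\Phi_e^{(A)})$ to be density-1; since $A\not\geq_g B$, the failure on the full oracle $A$ (with the current finite initial segment of $C$ spliced in) must therefore be an \emph{incorrect output} $\Phi_e(n)\neq B(n)$, witnessed by a finite use. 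One then extends the current approximation to $C$ by copying $A$ just far enough to capture that use. This dichotomy---either a $B$-oracle already fails (free), or the failure is a finitely-witnessed wrong output (finite extension)---is exactly the missing ingredient; with it, the construction is a straight finite-extension argument with no injury at all.
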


Before we prove this, however, we require a technical result roughly saying that in generic reductions, we may assume that computations always give more outputs if they get more inputs.

\begin{lem}\label{moreismore}

Let $A,B$ be reals. Then $B\geq_gA$ if and only if there is a $\varphi$ such that the following hold.

\begin{itemize}
\item
For any generic oracle, $(B)$, for $B$, $\varphi^{(B)}$ is a generic computation of $A$.
\item
For any $C$, if anything generically reduces to $C$ via $\varphi$, then for any partial oracles ${(C)_0}$ and $(C)_1$, if $\text{dom}({(C)_0})\subseteq\text{dom}({(C)_1})$, and ${(C)_1}\upharpoonright \text{dom}({(C)_0})={(C)_0}$ (with ${(C)_0}$ and ${(C)_1}$ regarded as partial functions), then $\text{dom}(\varphi^{(C)_0})\subseteq\text{dom}(\varphi^{(C)_1})$, and $\varphi^{(C)_1}\upharpoonright \text{dom}(\varphi^{(C)_0})=\varphi^{(C)_0}$.
 
\end{itemize}

\end{lem}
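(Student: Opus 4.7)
The backward direction is trivial, since the first bullet in the statement is exactly the definition of $B\geq_g A$. The substance is therefore the forward direction: given a Turing functional $\psi$ witnessing $B\geq_g A$, construct a new functional $\varphi$ that witnesses the same reduction and whose oracle access is ``enumeration-style,'' so that the monotonicity bullet comes for free.

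The plan is to define $\varphi^{(C)}(n)=v$ to halt exactly when there is a finite set $D$ of triples such that $\psi^{D}(n)\downarrow = v$ (treating $D$ itself as the total oracle, so that queries about triples in $D$ receive ``yes'' and all others ``no'') and $D$ is \emph{compatible} with $(C)$ in the sense that for every $\langle m,x,l\rangle\in D$ there is some $l'$ with $\langle m,x,l'\rangle\in(C)$. Since $\varphi$ merely dovetails over all finite $D$, it is a legitimate Turing functional. The monotonicity bullet is then immediate: compatibility of $D$ with $(C)$ depends only on the partial function induced by $(C)$, so whenever $(C)_1$ extends $(C)_0$ as partial functions, every $D$ compatible with $(C)_0$ is also compatible with $(C)_1$, and the same witness gives $\varphi^{(C)_1}(n)=\varphi^{(C)_0}(n)$.

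What remains to check is that $\varphi^{(B)}$ is still a generic computation of $A$ for each generic oracle $(B)$ of $B$. For density of the halting set: whenever $\psi^{(B)}(n)\downarrow=v$, let $D$ be the finite set of positively-answered queries made during that computation; then $D\subseteq(B)$, so $D$ is compatible with $(B)$, and running $\psi$ with oracle $D$ reproduces the same yes/no answer on each query (positives land in $D$, negatives do not), so $\psi^{D}(n)\downarrow=v$, witnessing $\varphi^{(B)}(n)\downarrow=v$. Hence $\dom(\varphi^{(B)})\supseteq\dom(\psi^{(B)})$, which is density-1. For correctness, given a witness $D$ for $\varphi^{(B)}(n)=v$, I would build an auxiliary generic oracle $(B)'$ for $B$ by taking $(B)'=D\cup\{\langle m,B(m),l\rangle : l\geq L\}$, where $L$ exceeds every $l$ appearing in any triple queried during $\psi^{D}(n)$. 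This $(B)'$ is a partial oracle for $B$ (every triple is truthful) whose domain as a partial function is cofinite, hence density-1, and running $\psi^{(B)'}(n)$ follows exactly the computation of $\psi^{D}(n)$ and halts with output $v$; since $\psi$ is a generic reduction, $v=A(n)$. The main subtlety --- and the reason the $l$-parameter in the definition of a partial oracle is doing real work --- lies in this padding step, where the freedom to choose large fresh $l$-values lets us enlarge $D$ to a generic oracle without disturbing the finite computation already completed on $D$.
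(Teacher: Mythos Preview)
Your approach is essentially the same as the paper's: given $\psi$ witnessing $B\ge_g A$, define $\varphi$ to dovetail over finite ``sub-oracles'' of the given oracle and output whatever $\psi$ says on the first one that halts. Your correctness argument via the $l$-padding trick is in fact more explicit than the paper's, which just asserts that ``any finite portion of a partial oracle for $B$ can be extended to a generic oracle for $B$''; you spell out exactly how to do this without disturbing the already-completed computation.

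There is one small gap in your monotonicity paragraph. You write that when $(C)_1$ extends $(C)_0$, ``the same witness gives $\varphi^{(C)_1}(n)=\varphi^{(C)_0}(n)$.'' But $\varphi$, as an actual Turing functional, must commit to the \emph{first} witness $D$ it finds in some fixed dovetailing; since more finite $D$'s are compatible with $(C)_1$ than with $(C)_0$, the first-found witness for $(C)_1$ can appear earlier in the enumeration than the one used for $(C)_0$, and a priori could give a different output. This is precisely why the second bullet of the lemma carries the hypothesis ``if anything generically reduces to $C$ via $\varphi$'': under that hypothesis, both $\varphi^{(C)_0}$ and $\varphi^{(C)_1}$ are partial computations of the same real, so they must agree wherever both halt. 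The paper makes exactly this move in one sentence; you should invoke the hypothesis rather than claim the witnesses literally coincide. With that adjustment, your argument is complete and matches the paper's.
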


We call such a $\varphi$ a ``more-is-more" functional, since having more information from the oracle never results in computing fewer things from the output. (Note that in Jockusch and Schupp's definition of generic reduction \cite{stuff}, all generic reductions are via more-is-more functionals.)

\begin{proof}

Certainly, if there is a $\varphi$ as in the statement of the lemma, then $B\geq_gA$, because the first bulletpoint ensures that $\varphi$ is a generic reduction of $A$ to $B$.

Conversely, assume that $B\geq_gA$ via $\psi$. Then, define $\varphi^X$ as follows.

Think of $X$ as a partial oracle, and consider all partial oracles $Y$ that agree with $X$ (i.e., so that $Y$ does not give any outputs that $X$ does not give). For any such $Y$, if $\psi^Y(n)=x$ then $\varphi^X(n)=x$. (If there are multiple such $Y$s for which $\psi^Y(n)$ is defined, then use the first such $Y$ that is found.)

Note then that, first of all, if $X$ is a generic oracle for $B$, then $\varphi^X$ is a generic computation for $A$, because every $Y$ that is used will be a partial oracle for $B$, and any finite portion of a partial oracle for $B$ can be extended to a generic oracle for $B$, and so $\psi^Y$ cannot make any mistakes when generically computing $A$. Anything that $\psi^X$ would output, will also be outputted by $\varphi^X$, and so the domain of $\varphi^X$ will be density-1.

Also, $\varphi$ satisfies the ``more is more" requirement of the lemma, because if ${(C)_0}$ and $(C)_1$ are as in the lemma, then 
any $Y$ that agreed with $(C)_0$ would also agree with $(C)_1$,
and so $\varphi^{(C)_1}$ would halt anywhere that $\varphi^{(C)_0}$ would halt. If anything generically reduces to $C$ via $\varphi$, then in particular, $\varphi^{(C)_1}$ must agree with $\varphi^{(C)_0}$ anywhere that they both halt.

\end{proof}

Now we move on to prove Proposition \ref{denseisdense}

\begin{proof}
Use Lemma \ref{d1subset} to fix density-1 sets $B\subseteq A$ in {\textbf b} and {\textbf a} respectively. We build a real $C$ such that $B\subseteq C\subseteq A$. This guarantees that $C$ is density-1 and that $B\geq_g C\geq_g A$. The main difficulty in the construction will be ensuring that $C\ngeq_gB$, and $A\ngeq_gC$.

Note that it is not necessary to ensure that $B\geq_TC$, and indeed, this will probably not be the case.

The basic idea of the proof is that $C$ will alternate between copying $B$ until it forces one instance of $A$ not computing it, and copying $A$ until it forces one instance of it not computing $B$. By the end of the construction, there will be no $\varphi$ via which $A$ could compute $C$, or via which $C$ could compute $B$.

\vspace{5pt}

At stage $2e$, we have some finite approximation $\sigma_{2e}$ to $C$, and we wish to extend it to ensure that $C$ will not generically reduce to $A$ via $\varphi_e$. 

The first thing that we ask is whether $\varphi_e$ is a functional that only outputs 1's when it halts. If not, then we do not have to do anything, since $C$ will be density-1 by the end of the construction, so by Lemma \ref{output1}, if there is a reduction from $C$ to $A$, then there is a reduction that only outputs 1's.

Next, we ask whether it is true that for every generic oracle $(A)$, for A, $\varphi_e^{(A)}$ is a generic computation of $A$. If not, then we do not have to do anything, since $C$ will be a density-1 subset of $A$, so any generic computation of $C$ that only outputs 1's must also be a generic computation of $A$. 

If so, then there must be some number $n$, and some generic oracle, $(A)$, for $A$ such that $\varphi_e^{(A)}(n)=1$, but $n\notin B$. Otherwise, $A\geq_gB$ via $\varphi_e$. (We are assuming that $\varphi_e^{(A)}$ halts on density-1, and only outputs 1's, so the only thing that could prevent it from being a generic computation of $B$ is if it outputs a 1 when it is not supposed to.)

By the usual argument, there must be infinitely many such $n$ (because otherwise we could modify $\varphi_e$ to not halt on those $n$, which is not possible, because we know that $A\ngeq_gB$.) So we can choose one such $n$ that is larger than $|\sigma_{2e}|$, and extend our approximation to $C$ to be equal to $B$ up to and including that $n$.

This ensures that it is not true that $A\geq_gC$ via $\varphi_e$, since $n\notin C$, but $\varphi_e^{(A)}(n)=1$.

\vspace{5pt}

At stage $2e+1$, we have some finite approximation $\sigma_{2e+1}$ to $C$, and we wish to extend it so that $B$ does not generically reduce to $C$ via $\varphi_e$.

The first thing we ask is whether $\varphi_e$ is a more-is-more functional. If not, then we do not need to do anything, since, by Lemma \ref{moreismore}, if there is a reduction from $B$ to $C$, then there is a reduction via a more-is-more functional.

Next, we ask whether it is true that for every generic oracle $(B)$, for B, with dom$((B))\subseteq B$, $\varphi_e^{(B)}$ is a generic computation of $B$. If the answer is ``no," then we do not have to do anything: $C$ will contain $B$, so every such generic oracle for $B$ will, in fact, also be a generic oracle for $C$. Thus if any of the $\varphi_e^{(B)}$ is not a generic computation of $B$, then $B$ will definitely not reduce to $C$ via $\varphi_e$.

If the answer is ``yes," then it must be true that for some $n$, $\varphi_e^A(n)\neq B(n)$. Otherwise, $A\geq_gB$ because for any $(A)$, $\varphi_e^{(A)}$ never gives incorrect outputs about $B$. (Notice that any generic oracle for $A$ actually is a superset of some generic oracle for $B$, so since $\varphi_e$ is a more-is-more functional, for any $(A)$, the domain of $\varphi_e^{(A)}$ is density-1.)

Similarly, if we let $A_1$ be the real that agrees with $\sigma_{2e+1}$ up to $|\sigma_{2e+1}|$, and agrees with $A$ after that, then there must also be some  $n\notin B$, such that $\varphi_e^{A_1}A(n)=1$ because otherwise $A_1\geq_gB$. This is not possible, since $A_1\equiv_gA$. (Recall that $A_1$ is a finite modification of $A$.) We extend our approximation to $C$ to match $A$ until we have copied enough to make $\varphi_e^{C}(n)\neq B(n)$ for some $n$.

This ensures that it is not true that $C\geq_gB$ via $\varphi_e$.

\vspace{5pt}

Once we have completed $\omega$-many stages, we will have ensured that, for each $e$, if $\varphi_e$ only outputs 1's, then it does not witness $A\geq_gC$, and also, if $\varphi_e$ is a more-is-more reduction, then it does not witness $C\geq_gB$. Thus, by Lemmas \ref{output1} and \ref{moreismore}, $A\ngeq_gC$ and $C\ngeq_gB$. Hence, because $B\subseteq C\subseteq A$, we have that that $B>_g C>_g A$.

Let {\textbf c} be the generic degree of $C$.

\end{proof}

We can strengthen this result to split $A$ over $B$.

\begin{prop}
Let {\textbf a} and {\textbf b} be density-1 generic degrees. Assume ${\textbf b} >_g {\textbf a}$. Then there exist density-1 degrees {\textbf c} and {\textbf d} such that ${\textbf b} >_g {\textbf c} >_g {\textbf a}$, ${\textbf b} >_g {\textbf d} >_g {\textbf a}$, and ${\textbf c}\oplus{\textbf d}={\textbf b}$.

\end{prop}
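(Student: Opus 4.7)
The plan is to adapt the construction of Proposition \ref{denseisdense}, running two interleaved finite-injury approximations instead of one. By Lemma \ref{d1subset} fix density-1 representatives $B \in \mathbf{b}$ and $A \in \mathbf{a}$ with $B \subseteq A$. I will build density-1 reals $C$ and $D$ with $B \subseteq C \subseteq A$, $B \subseteq D \subseteq A$, and $C \cap D = B$. Granted these, the inclusions give $B \geq_g C \geq_g A$ and $B \geq_g D \geq_g A$ by Lemma \ref{d1sub}, and Lemma \ref{d1join} applied to the density-1 pair $(C,D)$ gives $C \oplus D \equiv_g C \cap D = B$, whence $\mathbf{c} \oplus \mathbf{d} = \mathbf{b}$. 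It therefore remains to meet the four families of requirements $A \not\geq_g C$, $C \not\geq_g B$, $A \not\geq_g D$, $D \not\geq_g B$ (one copy per Turing functional $\varphi_e$), which I handle in rotation modulo $4$.

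At each stage $s$ I maintain finite approximations $\sigma^C_s, \sigma^D_s$ of equal length satisfying $B \upharpoonright |\sigma^C_s| \subseteq \sigma^C_s, \sigma^D_s \subseteq A \upharpoonright |\sigma^C_s|$ and $\sigma^C_s \cap \sigma^D_s = B \upharpoonright |\sigma^C_s|$. A diagonalization against $A \geq_g C$ (or, symmetrically, $A \geq_g D$) via $\varphi_e$ is carried out exactly as in the even stage of Proposition \ref{denseisdense}: after using Lemma \ref{output1} to assume $\varphi_e$ outputs only $1$'s and checking that $\varphi_e^{(A)}$ generically computes $A$ for every generic oracle for $A$, the hypothesis $A \not\geq_g B$ supplies infinitely many $n \in A \setminus B$ for which $\varphi_e^{(A)}(n) = 1$ on some generic oracle; I pick one beyond the current approximation length and extend $\sigma^C$ to agree with $B$ up to $n$ and $\sigma^D$ to agree with $A$ up to $n$ (swap the roles of $C, D$ for the symmetric requirement). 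A diagonalization against $C \geq_g B$ (or, symmetrically, $D \geq_g B$) via $\varphi_e$ is the odd-stage argument of Proposition \ref{denseisdense}: after using Lemma \ref{moreismore} to assume $\varphi_e$ is more-is-more and checking that $\varphi_e^{(B)}$ generically computes $B$ whenever $\dom((B)) \subseteq B$, the real $A_1$ obtained by concatenating $\sigma^C_s$ with $A$ above $|\sigma^C_s|$ satisfies $A_1 \equiv_g A$, so some $n$ has $\varphi_e^{A_1}(n) \neq B(n)$; I then extend $\sigma^C$ to copy $A$ and $\sigma^D$ to copy $B$ far enough to lock in every oracle bit used by the computation $\varphi_e^{A_1}(n)$. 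At every stage the enlargement on $A \setminus B$ happens on exactly one of $\sigma^C, \sigma^D$, so the disjointness constraint $\sigma^C \cap \sigma^D = B$ is preserved.

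The main technical obstacle is identical to the one already overcome in Proposition \ref{denseisdense}: at each ``match-$A$'' stage, a fresh diagonalizer $n$ larger than the current approximation length must exist, which rests on the fact that the set of witnessing $n$'s is infinite (otherwise $\varphi_e$ could be modified to avoid them and witness $A \geq_g B$, contradicting $\mathbf{b} >_g \mathbf{a}$). That argument applies verbatim, now with two approximations in play. In the limit we have density-1 reals $B \subseteq C \subseteq A$ and $B \subseteq D \subseteq A$ with $C \cap D = B$, and each of the four requirement families has been met, delivering $\mathbf{b} >_g \mathbf{c} >_g \mathbf{a}$, $\mathbf{b} >_g \mathbf{d} >_g \mathbf{a}$, and $\mathbf{c} \oplus \mathbf{d} = \mathbf{b}$.
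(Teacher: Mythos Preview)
Your proposal is correct and follows essentially the same approach as the paper: build $C,D$ between $B$ and $A$ with $C\cap D=B$, and alternate which of $C,D$ copies $B$ and which copies $A$ so as to diagonalize while preserving the intersection constraint. The only cosmetic difference is that the paper pairs the requirements (at stage $2e$ it simultaneously diagonalizes $A\not\geq_g C$ and $D\not\geq_g B$ via $\varphi_e$, since both call for the same copying pattern), whereas you rotate through the four requirement families one at a time; this is pure bookkeeping and changes nothing of substance.
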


\begin{proof}
The basic idea of this proof is that we will mimic the previous construction, but we will ensure that $C\cap D=B$, which will ensure that $C\oplus D\equiv_g B$ by Lemma \ref{d1join}. We may also, if we desire, ensure that $C\cup D=A$, which we will do, just for symmetry, but this does not ensure that ${\textbf a}$ is the infimum of {\textbf c} and {\textbf d} in the generic degrees.

We will assume familiarity with the proof of Proposition \ref{denseisdense}.

\vspace{5pt}

As before, fix density-1 sets $B\subseteq A$ in {\textbf b} and {\textbf a} respectively.

At stage $2e$, we have some finite approximations $\sigma_{2e}$ to $C$, and $\tau_{2e}$ to $D$, and we wish to extend them so that $C$ does not generically reduce to $A$ via $\varphi_e$, and so that $B$ does not generically reduce to $D$ via $\varphi_e$. (Notice here that we are satisfying requirement $2e$ for $C$, but requirement $2e+1$ for $D$.)

We accomplish this by having $C$ copy $B$ until for some $(A)$, $\varphi_e^{(A)}$ incorrectly computes some bit of $C$. We simultaneously have $D$ copy $A$ until for some $(D)$, $\varphi_e^{(D)}$ incorrectly computes some bit of $B$.

(If one of the strategies satisfies its objective before the other one does, then it continues to copy the set that it is copying until the other strategy has satisfied its own objective.) If one of the strategies does not need to act, then both strategies only wait for the one that needs to act to satisfy its objective. If neither strategy needs to act, then we simply move on to stage $2e+1$. (Again, this cannot be done uniformly, but that is fine, because we do not need either $C$ or $D$ to be computable from $B$, but just for them both to be generically computable from $B$. This is guaranteed since both $C$ and $D$ contain $B$.)

At stage $2e+1$, we have $C$ copy $A$ and $D$ copy $B$ in a manner analogous to what we did at stage $2e$.

At the end of the construction, we have that $C\cap D=B$,  and so $C\oplus D\equiv_g B$. We have also forced that $C\ngeq_gB, D\ngeq_gB, A\ngeq_gC,$ and $A\ngeq_gD$. So we may let {\textbf c}, and {\textbf d} be the generic degrees of $C$, and $D$, respectively.

\end{proof}

\subsection{Bounding Hyperarithmetic Reals}

Now, that we have established some techniques for working with density-1 degrees, we seek to prove Theorem \ref{hyp}, which says that for any real $A$, the Turing degree of $A$ is hyperarithmetic if and only if there is some density-1 $B$ such that $B\geq_g\mathcal{R}(A)$. For our purposes, the most useful characterization of the hyperarithmetic reals is that  is that the hyperarithmetic reals are precisely the reals that can be computed from any sufficiently fast growing function.

\begin{thm}\label{hyphasmodulus}
\normalfont{(Solovay)} \cite{Sol}

\emph{Let $A$ be a real. Then $A$ is hyperarithmetic if and only if there is a function $f$, and a Turing functional $\varphi$ such that for every function $g$ dominating $f$, $\varphi^g$ is a computation of $A$. In this case, we say that $f$ is a modulus of computation for $A$.}

\emph{(Here, $g$ dominates $f$ if and only if $\forall n$, $g(n)\geq f(n)$. In this case, we sometimes write $g\gg f$.)}

\end{thm}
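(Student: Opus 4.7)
The plan is to prove the two directions separately. The forward direction (hyperarithmetic implies a modulus exists) is essentially constructive, proceeding by effective transfinite recursion along Kleene's $\mathcal{O}$. The reverse direction (modulus implies hyperarithmetic) is subtler and requires $\Sigma^1_1$ bounding applied to well-founded trees.

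For the forward direction, I would prove by transfinite induction on $a \in \mathcal{O}$ that each iterated jump $H_a$ admits a modulus $f_a$ with corresponding functional $\psi_a$, all obtained uniformly in the notation $a$. The base case is trivial. At a successor notation $a = 2^b$, I would take $f_a(n)$ to be the maximum of $f_b(n)$ and a bound on the halting times of all converging $H_b$-computations on inputs at most $n$; the reduction first recovers $H_b$ using $f_a \geq f_b$ and then runs simulations up to $g(n)$ steps. At a limit notation $a = 3 \cdot 5^e$, the fundamental sequence is recursive in $a$, and the inductive moduli are combined via interleaving into a modulus for the effective join. Since every hyperarithmetic $A$ satisfies $A \leq_T H_a$ for some $a \in \mathcal{O}$, and moduli transfer under fixed Turing reductions by simple composition, this completes the direction.

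For the reverse direction, the naive attempt to write $n \in A$ as $\exists h \forall g (g \gg h \to \varphi^g(n) = 1)$ gives only a $\Sigma^1_2$ description, since the inner universal quantifier over real-valued $g$ makes the matrix $\Pi^1_1$ rather than arithmetic. To improve this to $\Delta^1_1$, I would reformulate in terms of well-founded trees. For each $n$, let $T_n$ be the tree of finite sequences $\sigma$ with $\sigma(i) \geq f(i)$ along which $\varphi^\sigma(n)$ remains undefined (viewing $\sigma$ as a partial oracle of its length). The hypothesis that $f$ is a modulus forces each $T_n$ to be well-founded, since any infinite path would give $g \gg f$ with $\varphi^g(n)\uparrow$. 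Their ranks $\|T_n\|$ are then uniformly bounded below $\omega_1^{\mathrm{CK},f}$ by $\Sigma^1_1$ bounding (equivalently, admissibility of $\omega_1^{\mathrm{CK},f}$). Exploiting this uniform bound, one extracts a hyperarithmetic modulus $\hat f$, from which $A = \varphi^{\hat f}$ is Turing-computable, so $A$ is hyperarithmetic.

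The main obstacle is carrying out the reverse direction rigorously: the passage from ``some modulus exists'' to ``a hyperarithmetic modulus exists'' is exactly where the effective descriptive set theory does real work, and there is no elementary shortcut. This is why the theorem is a nontrivial characterization rather than just a sufficient condition. The forward direction, in contrast, is conceptually straightforward, generalizing the familiar modulus-of-convergence construction for $\Delta^0_2$ sets to all $\Delta^1_1$ sets by transfinite induction.
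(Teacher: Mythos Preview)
The paper does not prove this theorem at all: it is quoted as a result of Solovay, with a citation, and the surrounding text explicitly says that for the purposes of the paper ``Theorem~\ref{hyphasmodulus} may be treated as a definition.'' So there is no proof in the paper to compare your attempt against.

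That said, a comment on your sketch. The forward direction is fine; the transfinite induction along $\mathcal O$ that you outline is the standard construction and goes through.

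The reverse direction, as you have written it, has a real gap. Your trees $T_n$ are defined using the condition $\sigma(i)\ge f(i)$, so they are $f$-recursive, not recursive. Relativized $\Sigma^1_1$ bounding then gives you a uniform bound $\alpha<\omega_1^{\mathrm{CK},f}$ on their ranks, exactly as you say. But nothing hyperarithmetic can be extracted from an ordinal below $\omega_1^{\mathrm{CK},f}$ when $f$ itself need not be hyperarithmetic; at this point the argument only yields that $A$ is $\Delta^1_1(f)$, which is trivial since already $A\le_T f$. The sentence ``exploiting this uniform bound, one extracts a hyperarithmetic modulus $\hat f$'' is precisely the step that needs an idea, and none is supplied. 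You correctly flag this passage as ``where the effective descriptive set theory does real work,'' but flagging it is not the same as doing it. To make the argument go through one must work with \emph{recursive} trees (dropping the dependence on $f$) and argue, for instance, that for the recursive tree $V_{n}=\{\sigma:\varphi^\sigma(n)\not\downarrow=A(n)\}$ the class $[V_n]$ fails to be dominating, and then show that this non-domination can be witnessed hyperarithmetically; or equivalently one must prove directly that the existence of some modulus implies the existence of a $\Delta^1_1$ modulus. Either route requires an additional argument beyond bounding ranks relative to $f$.
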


Here, $A$ is hyperarithmetic if and only if there is a recursive ordinal $\alpha$ such that $A\leq_T0^{\alpha}$, the $\alpha$th jump of $0$. Equivalently, as shown by Kleene, $A$ is hyperarithmetic if and only if it is $\Delta^1_1$ (i.e., definable by both a $\Sigma^1_1$ formula and a $\Delta^1_1$ formula in the language of second order arithmetic.) See \cite{sacks} for a more thorough explanation of the subject. In this paper, however, we will only need the characterization of the hyperarithmetic reals in terms of fast growing functions, so we do not present the other two characterizations in detail, and Theorem \ref{hyphasmodulus} may be treated as a definition.

The easier direction of our argument will be showing that anything computable from a sufficiently fast growing function, $f$, can be coded into a density-1 set, $A$, in such a way that any generic oracle for $A$ can recover a function that dominates $f$.

\begin{prop}\label{fastgrowing}
Let $A$ be any hyperarithmetic real. Then there is a density-1 real $B$ such that $B\geq_g \mathcal{R(A)} $.

\end{prop}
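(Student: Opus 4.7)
The plan is to code the modulus function $f$ for $A$ into a density-$1$ set $B$ in such a way that any generic oracle for $B$ recovers a function $g \gg f$; then the modulus functional $\varphi_0$ from Theorem \ref{hyphasmodulus} computes $A$, and hence $\mathcal{R}(A)$. Concretely, I would use the bijection $\pi:\N^2\to\N$ given by $\pi(n,k):=2^n(2k+1)-1$, and set $C_n:=\{\pi(n,k):k\geq 0\}$ to be the $n$th column; each $C_n$ has density $1/2^{n+1}$ in $\N$ and the column densities sum to $1$. I would then define
\[
B := \{\pi(n,k) : k \geq f(n)\},
\]
so that $B^c$ consists of exactly the first $f(n)$ elements of each column $C_n$.

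The first step is to verify that $B$ is density-$1$. Given $\varepsilon>0$, I would pick $n_0$ with $\sum_{n>n_0}1/2^{n+1}<\varepsilon$. A column $C_n$ contributes nothing to $B^c\cap[0,N]$ once $\pi(n,0)=2^n-1>N$, so only columns with $n\leq\log_2(N+1)$ matter. The contribution from $n\leq n_0$ is bounded by the constant $\sum_{n\leq n_0}f(n)$, and the contribution from $n_0<n\leq\log_2(N+1)$ is at most $|C_n\cap[0,N]|\leq (N+1)/2^{n+1}+1$ per column, giving a total of at most $(N+1)\varepsilon+\log_2(N+1)$. Dividing by $N$ and letting $N\to\infty$ yields $\limsup|B^c\cap[0,N]|/N\leq\varepsilon$, and since $\varepsilon>0$ was arbitrary, $B$ has density $1$.

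The second step is to define the reduction. Given any generic oracle $(B)$ with domain $D$, the density-$0$ of $D^c$ combined with the positive density of each column $C_n$ forces $D\cap C_n$ to have density $1/2^{n+1}$ in $\N$, hence to be infinite; since $C_n\setminus B$ is finite, $D\cap C_n\cap B$ is infinite as well. The functional $\varphi$ would compute $g(n)$ by dovetailing the queries $\pi(n,0),\pi(n,1),\ldots$ until a triple $\langle\pi(n,k_n),1,l\rangle$ appears in $(B)$, and then set $g(n):=k_n$; correctness of the oracle gives $\pi(n,k_n)\in B$, so $k_n\geq f(n)$ and hence $g\gg f$. Then $\varphi_0^g$ is a total computation of $A$, and $\varphi^{(B)}(m):=A(\nu_2(m))$ (for $m\geq 1$, with the obvious convention at $m=0$) is a total, hence generic, computation of $\mathcal{R}(A)$ from $(B)$, witnessing $B\geq_g\mathcal{R}(A)$. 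The main obstacle is the first step: for $f$ arbitrary and possibly very fast-growing, the encoding still has to yield a density-$1$ set, and this is exactly what the summability of the column densities $1/2^{n+1}$ makes possible.
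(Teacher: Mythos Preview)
Your argument is correct and follows the same overall strategy as the paper: use Solovay's modulus characterization, encode the modulus $f$ into a density-$1$ set $B$, and show that any generic oracle for $B$ uniformly recovers some $g\gg f$, from which $A$ and hence $\mathcal{R}(A)$ are computed outright. The difference is in the encoding. The paper builds $B$ so that its \emph{global} density approaches $1$ slowly---specifically so that $\frac{|B\cap n|}{n}<1-2^{-m}$ whenever $n<f(m)$---and recovers $g(m)$ as the first $n$ at which the oracle's observed density exceeds $1-2^{-m}$. You instead partition $\N$ into columns $C_n$ of summable densities $2^{-n-1}$ and delete the first $f(n)$ elements of each column, recovering $g(n)$ as the index of the first column-$n$ element the oracle confirms to lie in $B$. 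Your encoding is more local and arguably more transparent (each value $f(n)$ lives in its own column, and the density-$1$ verification reduces to a tail-sum estimate); the paper's encoding has the mild advantage that it makes no structural use of a column decomposition and ties the recovered $g$ directly to the density profile, which meshes with the converse argument in the paper's Proposition~\ref{fastgrowing2}. Either route proves the proposition.
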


The idea of this proof is that, if $f$ is a fast growing function, we may define $B$ so that $B$ is density-1, but the density of $B$ approaches 1 very slowly. If we do this, then any generic oracle, $(B)$, for $B$ will have its density approach 1 at most as quickly as $B$'s density does, and thus $(B)$ will be able to compute a function that dominates $f$. $(B)$ can then use that function to compute $A$.

\begin{proof}
By Theorem \ref{hyphasmodulus}, there is a function $f$, and a Turing functional $\varphi$ such that for any $g$, if $\forall n\, g(n)\geq f(n)$, then $\varphi^g$ is a computation of $A$.

Let $f$, $\varphi$ be as above. Replacing $f$ with a faster growing function if necessary, we may assume $f$ is an increasing function.

Let $B$ be any density-1 real such that for every $n$ and $m$, if $n< f(m)$, then $\frac{|{B\cap n}|}{n}<1-2^{-m}$. Such a real can be constructed by the simple algorithm, in which for each $n$, we decide whether $n$ goes into $B$ after determining $B\cap n$. We make this decision based on: $n$ goes into $B$ if and only if putting it in to $B$ would not cause $\frac{|{B\cap (n+1)}|}{n+1}$ to go above $1-2^{-m}$, where $m$ is the smallest $m$ such that $f(m)>n+1$. (Since $f$ is an increasing function, such an $m$ exists.)

When $B$ is defined in this manner, it will have density 1 because, as $n$ gets large, the restrictions on putting elements into $B$ get weaker. Thus, for any fixed value of $m$, we will eventually stop requiring that $\frac{|{B\cap (n+1)}|}{n+1}<1-2^{-m}$. Then, once $n$ becomes large enough, $\frac{|{B\cap (n+1)}|}{n+1}$ will be greater than $1-2^{-m}$. (We need $n$ to be sufficiently large that whatever happened earlier is negligible, and also that adding or removing $n$ from $B$ will not change $\frac{|{B\cap (n+1)}|}{n+1}$ by more than $2^{-(m+1)}$.)

Then, from any generic oracle $(B)$ for $B$, we can define a function $g$, where $g(m)$ is the first number $n$ such that we see that $\frac{|{B\cap\, \text{dom}((B))\cap n}|}{n}\geq1-2^{-m}$. For every $m$, $g(m)$ is defined because the density of $(B)$ must approach 1, so there must be some least $n_0$ such that for all $n_1\geq n_0$, $\frac{|{B\cap\, \text{dom}((B))\upharpoonright n_1}|}{n_1}>1-2^m$, and so there must be some $n\geq n_0$ where we see that $\frac{|{B\cap\, \text{dom}((B))\cap n}|}{n}\geq1-2^{-m}$. (Note that, since $(B)$ only provides an enumeration of its domain, the first such $n$ that we find might not be the smallest such $n$. Generic oracles sometimes take a long time to give their answers, so we slowly increase the values of $n$ that we check while waiting for the oracle to converge on all inputs less than $n$.)

Since $B\cap \text{dom}((B))\subseteq B$, we have that for every $m$, $g(m)\geq f(m)$, so we may compute $A$, and therefore compute (and generically compute) $\mathcal{R}(A)$ via $n\in A \leftrightarrow\varphi^g(n)=1$.

\end{proof}

The other direction of our argument is more subtle, because, as we will see in Proposition \ref{d1ismore}, there is no way to code an arbitrary density-1 real within a modulus of computation. Instead, we are forced to work both with our algorithm, and with the real being computed.

\begin{prop}\label{fastgrowing2}
Let $A$ be a real, and suppose $B$ is a density-1 real such that $B\geq_g \mathcal{R(A)}$. Then $A$ is hyperarithmetic.

\end{prop}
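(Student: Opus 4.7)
The aim is to apply Theorem \ref{hyphasmodulus} by producing a modulus of computation for $A$. By Lemma \ref{output1}, fix a Turing functional $\varphi$ such that for every generic oracle $(B)$ for $B$ satisfying $\dom((B)) \subseteq B$, $\varphi^{(B)}$ is a generic computation of $\mathcal{R}(A)$; by Lemma \ref{moreismore} we may additionally assume that $\varphi$ is more-is-more. A crucial consequence is that any finite set $Q \subseteq B$---viewed as the partial oracle $\{\langle n,1,0\rangle : n \in Q\}$---extends to a bona fide generic oracle for $B$ (fill in $B \setminus Q$ at arbitrarily late time stages), so by more-is-more and the correctness of generic reductions, whenever $\varphi^Q((2k+1)2^m)\downarrow = x$ we must have $x = A(m)$.

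I would next define $f \colon \N \to \N$, computable from $B$, by letting $f(m)$ be the least stage $s$ at which both: (a) for some $k$ and some finite $Q \subseteq B \cap [0,s]$, $\varphi^Q((2k+1)2^m) \downarrow$ in at most $s$ steps; and (b) $|B \cap [0,s]|/s > 1 - 2^{-(m+c)}$ for a sufficiently large fixed constant $c$. Part (a) eventually holds because the canonical generic oracle $\{\langle n,1,0\rangle : n \in B\}$ forces $\varphi$ to halt on density-$1$ many inputs of the form $(2k+1)2^m$, and part (b) eventually holds because $B$ is density-$1$.

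It then remains to exhibit a Turing functional $\psi$ such that $\psi^g$ computes $A$ whenever $g$ dominates $f$. On input $m$, $\psi^g$ performs a bounded search, enumerating every triple $(Q,k,x)$ with $Q \subseteq [0,g(m)]$, $k \leq g(m)$, and $\varphi^Q((2k+1)2^m)\downarrow = x$ in at most $g(m)$ steps, and then extracts $A(m)$ from this finite list. Since $g(m) \geq f(m)$, at least one enumerated triple arises from a $Q \subseteq B$ and must have $x = A(m)$; but triples arising from $Q \not\subseteq B$ may in principle report arbitrary values, and given only $g$ there is no direct way to test the subset relation $Q \subseteq B$. This extraction step is what I expect to be the main technical obstacle. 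My plan for resolving it is to combine two ingredients: more-is-more, which implies that any two enumerated witnesses $(Q_1,k_1,x)$ and $(Q_2,k_2,1-x)$ with opposing outputs force $Q_1 \cup Q_2 \not\subseteq B$; and condition (b), which bounds $|\bar B \cap [0,g(m)]|$ by $2^{-(m+c)}g(m)$, so the elements that a ``fake'' witness can exploit are quantitatively scarce. A counting/consistency argument along these lines should allow us to isolate the unique value $A(m)$ supported by witnesses in a way no $Q \not\subseteq B$ can simulate.

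Once $\psi$ and $f$ are in hand, the pair forms the required modulus of computation for $A$, and Theorem \ref{hyphasmodulus} immediately delivers that $A$ is hyperarithmetic.
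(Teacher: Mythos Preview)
Your overall framework is correct and matches the paper's: define a modulus $f$ from the rate at which the density of $B$ approaches $1$, then show that any $g\gg f$ uniformly computes $A$. The definition of $f$ you give is essentially the paper's (though you should define $f(m)$ as the \emph{last} time the density of $B$ dips below $1-2^{-(m+c)}$, not merely a stage at which the density happens to be high; otherwise $g(m)\geq f(m)$ does not guarantee the density bound at $g(m)$).

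The genuine gap is exactly where you say it is: the extraction step. Your proposed ``counting/consistency'' argument does not work as stated. More-is-more tells you that any $Q$ producing the wrong answer must contain an element of $\bar B$, but that is far too weak: a single $b\notin B$ can appear in exponentially many $Q\subseteq[0,g(m)]$, each producing an incorrect output, and these bad witnesses may vastly outnumber the good ones. There is no majority vote, no maximal-clique argument, and no cardinality bound that separates the correct value from the incorrect one using only the sparsity of $\bar B\cap[0,g(m)]$.

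The paper resolves this with a different idea. From $g$ one recursively builds a tree $T_g\subseteq 2^{<\omega}$ whose paths are exactly the density-$1$ sets whose density approaches $1$ at least as fast as $g$ prescribes; in particular $B$ is a path. To compute $A(m)$, one searches not for a witness $Q\subseteq B$ but for a finite string $X_0$ such that for \emph{every} $X$ extendible in $T_g$, the partial oracle for $X_0\cap X$ (outputting $1$'s only) makes $\varphi$ halt on some $(2k+1)2^m$, with all such outputs agreeing. Such an $X_0$ exists by compactness because $X_0=B$ works. And the common answer must be correct, because $B$ is always one of the allowed $X$'s, so $X_0\cap B\subseteq B$ is always among the oracles consulted and cannot be made to lie. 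The point is that you never try to recognise which $Q$'s are contained in $B$; instead you arrange that $B$ is automatically one of the voters, so any unanimous verdict is one $B$ endorsed. This intersection-with-all-paths trick is the missing ingredient in your argument.
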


The basic idea of the construction is that we use $B$ to generate a function such that from any faster growing function, one can generate a binary tree of density-1 oracles that includes $B$. (The function generated is not the same as the function from Proposition \ref{fastgrowing}. This function's purpose is in some sense dual to the purpose of the previous function.)

Once we have this class of oracles, we have them engage in a process that can be visualized as voting in pairs until they can find one leader who is strong-willed enough to make them vote unanimously. The oracle $B$ is able to make them vote unanimously, so eventually they will find such a leader. Also, $B$ is not corruptible, so when they find such a leader, even if that leader is not $B$, that leader will have $B$'s support, and so in particular, that leader will lead the oracles to the correct conclusion.

\begin{proof}

By assumption, let $A$ be a real, $B$ a density-1 real, and $\varphi$ a Turing functional such that $B\geq_g \mathcal{R}(A)$ via $\varphi$.

By Theorem \ref{hyphasmodulus}, we must prove that there is a function $f$, and a Turing functional $\psi$ such that for any $g$, if $g>>f$, then $\psi^g$ is a computation of $A$.

Let $f$ be the function where $f(m)$ is the smallest number such that $\forall n> f(m),\ \frac{|{B\cap n}|}{n}>1-2^{-m}$. Then we claim that from any $g$ that dominates $f$, we can uniformly compute $A$.

The basic idea of the construction is that from any such $g$, we can get a lower bound on the rate at which the density of $B$ goes to 1. We then consider all density-1 oracles whose density goes to 1 at at least this fast. (We use $g$ to build a tree of possibilities for $B$ that includes $B$, but also only includes paths that are density-1.) Then, to get an answer from our collection of oracles we have them work together in pairs in a way that ensures that whatever answer we get is an answer that has been approved by $B$.

Let $g$ be a function such that for all $m$, $g(m)\geq f(m)$. Replacing $g$ by a faster growing function if necessary, we may assume that $g$ is an increasing function. (Replace $g$ with $\tilde g$, where $\tilde g(m)=\text{max}(g(m),\tilde g(m-1)+1)$.)

Define $T_g\subseteq2^{<\omega}$ to be the tree such that $\sigma\in T_g$ if and only if $\forall n,m$, if $n>g(m)$, and if $n\leq|\sigma|$, then $\frac{\#\lbrace k<n\, |\, \sigma(k)=1\rbrace}{n}>1-2^{-m}$. The important facts about $T_g$ are that every path through $T_g$ is density-1, $B$ is a path through $T_g$, and that $T_g$ is uniformly recursive in $g$.

The first fact holds because $g$ provides a lower bound on the rate at which the density of a path must go to 1, and the paths through $T_g$ all respect that lower bound. The second fact holds because, by the definition of $f$, $B$ is a path through $T_f$, and faster growing functions provide larger trees, not smaller ones. The third fact holds because the definition is uniform in $g$ (and the apparently unbounded quantifiers over $n$ and $m$ are bounded by $g(m)<n\leq|\sigma|$. Since $g$ is an increasing function, a bound on $g(m)$ is a bound on $m$.)

To determine whether $n\in A$, we search for a real $X_0$ such that for every $X$ that looks like it might be a path through $T$, if we let $Y_X=X_0\cap X$, and let $(Y_X)$ be the partial oracle for $Y_X$ that only halts on the elements of $Y_X$, (and that halts immediately if it halts,) then for some odd value of $k$, $\varphi^{(Y_X)}(2^nk)$ halts, and such that all of those computations (ranging over different reals $X$) halt and give the same answer. Then $n\in A$ if and only if that answer is ``1."

Such an $X_0$ exists because $B$ is such a real. (If we let $X_0=B$, then for every $X$, if $X$ is a path through $T$, then $(Y_X)$ is a generic oracle for $B$, and so $\varphi^{(Y_X)}$ must be a generic computation of $\mathcal{R(A)}$, and thus it must halt on $2^nk$ for some odd $k$. This is observed at some finite stage by the usual compactness argument --- Every $X$ must either at some finite height fall out of $T$, or at some finite stage, with some finite portion of itself, provide a correct answer by having $\varphi^{(Y_X)}(2^nk)$ halt. Note that since all of this happens at a finite stage, we do not need to search over all uncountably many potential values of $X_0$, but rather only over all potential finite initial segments of $X_0$, so the search for $X_0$ can be conducted effectively.)

Furthermore, the answer given when $X_0$ is found must be correct, because $B$ is a path through $T_g$. Thus is always one of the eligible values for $X$, so for any $X_0$, $(Y_B)$ is a generic oracle for $B$. Thus, $\varphi^{(Y_B)}$ cannot give any incorrect outputs for $\mathcal{R(A)}$.

Thus, once again, intuitively speaking, $B$ is smart enough to force every $X$ to give the correct vote, so a consensus must eventually be reached. No $X_0$ is able to force $B$ to vote incorrectly, so any reached consensus must be a correct one. We use $g$ only to build a population of density-1 sets that includes $B$ as a member.

\end{proof}

We may now conclude the main theorem of this section.

\begin{thm}\label{hyp}
A real $A$ is hyperarithmetic if and only if there is a density-1 real $B$ such that $B\geq_g \mathcal{R(A)}$.

\end{thm}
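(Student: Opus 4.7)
The plan is that this theorem is essentially just the conjunction of the two propositions immediately preceding it, so the proof is a one-line combination. I would simply state that the forward direction ($A$ hyperarithmetic $\Rightarrow$ the existence of a density-1 $B$ with $B\geq_g\mathcal{R}(A)$) is exactly Proposition \ref{fastgrowing}, while the reverse direction ($B$ density-1 with $B\geq_g\mathcal{R}(A)$ $\Rightarrow$ $A$ hyperarithmetic) is exactly Proposition \ref{fastgrowing2}.

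Accordingly, no new argument is needed and there is no real obstacle at this step; the substantive content lives in the two propositions. The first proposition handled the coding side: given a modulus of computation $f$ for $A$ (guaranteed by Solovay's Theorem \ref{hyphasmodulus}), it built a density-1 real $B$ whose density rises to $1$ so slowly, pinned down by $f$, that every generic oracle for $B$ must itself reveal a function dominating $f$, hence compute $A$ and thus $\mathcal{R}(A)$. The second proposition handled the decoding side: given a generic reduction $\varphi$ witnessing $B\geq_g\mathcal{R}(A)$, it used the density rate of $B$ to define a function $f$ such that any $g\gg f$ generates a recursive-in-$g$ tree $T_g$ containing $B$ all of whose paths are density-1, and then ran the ``voting'' compactness argument over $T_g$ to extract $A$ from $g$, invoking Theorem \ref{hyphasmodulus} to conclude $A$ is hyperarithmetic.

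Thus my ``proof'' of Theorem \ref{hyp} would read, in full: ``Combine Proposition \ref{fastgrowing} with Proposition \ref{fastgrowing2}.'' If a bit more exposition is desired, I would append a sentence emphasizing why this is the right equivalence to state: it gives an intrinsic, generic-degree-theoretic characterization of the hyperarithmetic Turing degrees, via the embedding $X\mapsto\mathcal{R}(X)$ from Proposition \ref{R}, entirely in terms of bounding by a density-1 generic degree.
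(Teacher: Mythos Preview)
Your proposal is correct and matches the paper's own proof essentially verbatim: the paper simply states that the theorem follows directly from Propositions \ref{fastgrowing} and \ref{fastgrowing2}. Your additional exposition of what each proposition accomplishes is accurate and aligns with the arguments given in the paper.
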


\begin{proof}
This follows directly from Propositions \ref{fastgrowing} and \ref{fastgrowing2}.

\end{proof}

We also mention a somewhat useful corollary that we get from the proof of Proposition \ref{fastgrowing2}.

\begin{cor}

Let $B$ be a density-1 real, and assume that there is a recursive lower bound on the rate at which the density of $B$ approaches 1. Then $B$ is quasi-minimal.

Here, a recursive lower bound is a recursive function $h:\N\rightarrow[0,1]$ such that $\lim_{n\rightarrow\infty}h(n)=1$ and for every $n$, and every $k\geq n$, $\frac{|{B\cap k}|}{k}\geq h(n)$.

\end{cor}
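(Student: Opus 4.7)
The strategy is to reuse the proof of Proposition \ref{fastgrowing2}, observing that its conclusion depends only on producing a function dominating the modulus $f(m) := $ the least $n_0$ such that $|B \cap n|/n > 1 - 2^{-m}$ for all $n > n_0$; if we can produce such a dominating function \emph{recursively}, then the resulting computation of $A$ will itself be recursive, not merely hyperarithmetic.

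First, from the recursive lower bound $h$, I would define $\tilde f(m)$ to be the least $n$ with $h(n) > 1 - 2^{-m}$. This is total because $\lim_{n\to\infty} h(n) = 1$, and recursive because $h$ is. Next, I would verify that $\tilde f$ dominates $f$: for any $k \geq \tilde f(m)$, the defining property of a recursive lower bound gives $|B \cap k|/k \geq h(\tilde f(m)) > 1 - 2^{-m}$, so $f(m) \leq \tilde f(m)$.

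With $\tilde f$ in hand, suppose $B \geq_g \mathcal{R}(A)$ for some real $A$ via some functional $\varphi$. Running the argument of Proposition \ref{fastgrowing2} on $\varphi$ using the recursive $g = \tilde f$ in place of the hypothetical fast-growing $g$ there, we obtain a uniform recursive computation of $A$, hence $A$ is recursive. Therefore the only Turing degree below the generic degree of $B$ under the embedding $\mathcal{R}$ is $\mathbf{0}$; combined with the tacit nontriviality assumption needed to apply Definition \ref{quasiminimal} (i.e. that $B$ is itself not generically computable, which is the only content of quasi-minimality left to verify), the generic degree of $B$ is quasi-minimal.

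There is no real obstacle, since the heavy machinery --- the tree-of-oracles voting construction that drives Proposition \ref{fastgrowing2} --- is already in place. The only new ingredient is the elementary observation that a recursive pointwise lower bound on $|B \cap n|/n$ inverts, via searching for the first $n$ where $h(n)$ crosses a given threshold, into a recursive pointwise upper bound on $f$. The one subtle point worth double-checking is that the passage in Proposition \ref{fastgrowing2} from ``$g$ dominates $f$'' to ``$\psi^g$ computes $A$'' really is uniform in $g$, so that feeding in a recursive $\tilde f$ produces a recursive (not merely hyperarithmetic) computation of $A$; this is transparent from the construction there, since $\psi$ depends only on $\varphi$ and the arithmetic of densities used to build $T_g$.
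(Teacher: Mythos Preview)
Your proposal is correct and essentially identical to the paper's own proof: both define the same recursive function (the paper calls it $g$, you call it $\tilde f$) as the least $n$ with $h(n)>1-2^{-m}$, verify that it dominates the modulus $f$ from Proposition \ref{fastgrowing2}, and then invoke that proposition's construction to conclude that $A$ is recursive. Your explicit remark about the uniformity of $\psi$ in $g$ and your flagging of the tacit nontriviality assumption are welcome clarifications, but the argument itself matches the paper's line for line.
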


\begin{proof}

Let $B$ and $h$ be as in the statement of the corollary.

Let $A$ be a real, and assume that $B\geq_g \mathcal{R(A)}$.

Let $f$ be the function where $f(m)$ is the smallest number such that $\forall n> f(m),\ \frac{|{B\cap n}|}{n}>1-2^{-m}$.

Let $g$ be the function such that $g(m)$ is the smallest $n$ for which $h(n)>1-2^{-m}$.

Then, $g\gg f$, 
so by the proof of Proposition \ref{fastgrowing2}, $g$ computes $A$. Also, $g$ is recursive because $h$ is recursive, and $g$ is defined recursively in $h$. Therefore, $A$ is recursive, because it can be computed from a recursive function.

Thus, for any $A$, if $B\geq_g \mathcal{R(A)}$, then $A$ is recursive, and so $B$ is quasi-minimal.

\end{proof}

Before we move on to discuss questions of the structure of the generic degrees, we call attention to the asymmetry in our proof of Theorem~\ref{hyp}.

The proof of Proposition \ref{fastgrowing} shows that for any $f$, there is a density-1 real $B$, such that any generic oracle for $B$ can uniformly compute a function that dominates $f$. Thus, the entire content of $f$, as a modulus of computation, can be captured by a density-1 real, as a generic oracle.

We show here that the converse is not true, so density-1 generic degrees, in some sense, are more powerful than moduli of computation, but this cannot be seen from the Turing degrees that they can compute.

\begin{prop}\label{d1ismore}

There exists a density-1 real, $A$, such that for every $f:\N\rightarrow\N$, and every $\varphi$, there is a $g\gg f$ such that $\varphi^g$ is not a generic computation of $A$.

\end{prop}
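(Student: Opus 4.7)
The plan is to construct $A$ density-1 by a direct diagonalization, placing sparse ``holes'' in $\mathbb{N}\setminus A$ designed to defeat each Turing functional $\varphi_e$ against all moduli $f$ simultaneously. For each $e$, the key object is
\[
V_e := \{n : \forall f\in\N^\N,\ \exists \sigma\in\N^{<\omega},\ \sigma(i)\geq f(i)\text{ for all }i<|\sigma|,\ \varphi_e^\sigma(n)=1\},
\]
the set of positions at which $\varphi_e$ can be forced to output $1$ using arbitrarily dominating finite oracles. The point is that if $h_e\in V_e$ and we arrange $A(h_e)=0$, then for every $f$ there is $\sigma$ with $\sigma(i)\geq f(i)$ on $i<|\sigma|$ and $\varphi_e^\sigma(h_e)=1$; filling $\sigma$ out to any $g\gg f$ yields $\varphi_e^g(h_e)=1\neq 0=A(h_e)$, producing exactly the $g$ required by the proposition.

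The construction then runs in $\omega$ stages: list $\varphi_0,\varphi_1,\dots$; at stage $e$, if $V_e$ is infinite I choose $h_e\in V_e$ with $h_e\geq 2^e$ strictly beyond all previously chosen holes and declare $h_e\notin A$; otherwise I act trivially at this stage. All other natural numbers are placed in $A$. Since at most one hole is selected per stage and holes lie at positions $\geq 2^e$, the density of $\mathbb{N}\setminus A$ up to $N$ is $O((\log N)/N)\to 0$, so $A$ is density-1.

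The essential content is the dichotomy for each $\varphi_e$: either $V_e$ is infinite (and the above step works) or $\varphi_e$ cannot be part of a generic modulus for any density-1 real (and requirement $R_e$ holds vacuously for any density-1 $A$). To justify the second case, suppose $V_e$ is finite but $(f_0,\varphi_e)$ is a generic modulus for some density-1 $A'$. For each $n\notin V_e$ (a cofinite set), fix a blocking function $h_n\geq f_0$ such that no $\sigma$ with $\sigma(i)\geq h_n(i)$ on $i<|\sigma|$ has $\varphi_e^\sigma(n)=1$; then for any $g\gg h_n$ with $A'(n)=1$ we must have $n\notin\dom(\varphi_e^g)$, since the only non-$\uparrow$ output consistent with $\varphi_e^g$ being a generic computation of $A'$ would be $1$, which is blocked. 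A compactness/K\"onig argument on the tree of finite oracles extending $f_0$ in the pointwise order --- of the same flavor as the tree $T_g$ used in the proof of Proposition~\ref{fastgrowing2} --- then produces a single $g\gg f_0$ pointwise dominating $h_n$ for a density-1 set of $n\in A'$, forcing $\dom(\varphi_e^g)$ to be density-deficient and contradicting the modulus assumption.

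The main obstacle I anticipate is precisely this compactness step: the blocking functions $h_n$ may grow very quickly and vary unboundedly across coordinates, so one must choose them canonically (e.g., via a least-witness rule applied to the $\Sigma^1_1$ witnesses defining $V_e^c$) and then prune the oracle tree so that the dominating $g$ can be extracted despite no single function being able to dominate all $h_n$ uniformly. Once this compactness argument is carried out, the two-case construction above yields the desired $A$.
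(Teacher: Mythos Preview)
Your argument has a real gap in the ``$V_e$ finite'' case. The claim that $V_e$ finite forces $\varphi_e$ to fail as a modulus for \emph{every} density-$1$ real is simply false. Take $\varphi^g(n)=1$ if $n\geq g(0)$ and $\varphi^g(n)\uparrow$ otherwise. For any $n$, choosing $f$ with $f(0)=n+1$ shows there is no $\sigma$ dominating $f$ with $\varphi^\sigma(n)=1$, so $V_\varphi=\emptyset$. Yet for $A'=\N$ and any $f_0$, every $g\gg f_0$ gives $\dom(\varphi^g)=[g(0),\infty)$ with all outputs equal to $1$, a perfectly good generic computation of $\N$. More to the point, the compactness step you flag cannot be carried out: for this $\varphi$ the blocking functions are \emph{forced} to satisfy $h_n(0)>n$, so $\{n:g\gg h_n\}\subseteq\{n:n<g(0)\}$ is finite for every $g$, and no single $g$ dominates even an infinite set of $h_n$'s, let alone a density-$1$ set. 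The analogy with the tree $T_g$ in Proposition~\ref{fastgrowing2} breaks down because that was a subtree of $2^{<\omega}$, hence finitely branching; the relevant tree here would live in $\N^{<\omega}$, where K\"onig's lemma is unavailable.

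The paper's proof avoids this by using a different dichotomy. After reducing to bigger-is-less functionals that output only $1$'s, it does \emph{not} ask whether $\bigcap_f\dom(\varphi^f)$ is infinite (your $V_e$); instead it asks whether there is a sparse-by-design set $S$ such that $\dom(\varphi^f)\cap S\neq\emptyset$ for \emph{every} $f$. This is much weaker than ``$V_e$ infinite,'' so the complementary case is correspondingly stronger: if no such $S$ exists, one can build a single $g$ with $\dom(\varphi^h)=\emptyset$ for all $h\gg g$. Proving this dichotomy is the real work --- it requires a cutting-and-pasting argument (halving sparse sets repeatedly and applying pigeonhole) rather than any off-the-shelf compactness. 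Correspondingly, the diagonalization in the first case removes an entire sparse set $S_e$ from (a column of) $A$, not a single point; your one-hole-per-functional scheme is too weak to exploit the sparse-set dichotomy even once it is proved.
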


To prove Proposition \ref{d1ismore}, we require a number of  technical lemmas concerning the specifics of how a reduction from a modulus of computation to a density-1 generic degree must work. The proof is somewhat long, in part because we will be required to somehow diagonalize against all possible moduli of computation --- an inherently uncountable set. We first introduce a modified version of more-is-more functionals that to works with oracles which are fast growing functions, rather than with partial oracles.

\begin{defn}\label{biggerisless}
\normalfont

Let $\varphi$ be a functional that outputs only 1's (as in Lemma \ref{output1}). Then $\varphi$ is a \emph{bigger-is-less} functional if for any functions $g,h$, if $h\gg g$, then $\text{dom}(h)\subseteq\text{dom}(g)$.

\end{defn}

\begin{lem}\label{biggerisless}

Let $A$ be a density-1 real, and let $f$ be a function such that there is a $\varphi$ so that for every $g\gg f$, $\varphi^g$ is a generic computation of $A$.

Then there exists a bigger-is-less functional $\psi$, which only outputs 1's (as in Lemma \ref{output1}) such that $\forall g$ if $g\gg f$, then $\psi^g$ is a generic computation of $A$.

\end{lem}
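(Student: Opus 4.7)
My plan is to mirror the construction of Lemma \ref{moreismore}, dualized for oracles that are total functions rather than partial oracles. Whereas more-is-more was obtained by witnessing halts via any partial oracle consistent with the given one, bigger-is-less should be obtained by witnessing halts via any finite sequence that pointwise \emph{dominates} $g$ on its length. Getting the direction of domination right is the crux of the argument: if one instead required every finite sequence pointwise \emph{below} $g$ to satisfy $\varphi^\sigma(n)=1$, the resulting functional would be bigger-is-less but would typically fail to have density-1 domain, since the universal quantifier would range over sequences (e.g.\ those with $\sigma\not\geq f$) on which $\varphi$ was never assumed to be well-behaved.

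First I would apply the trick of Lemma \ref{output1} to replace $\varphi$ by the functional that halts precisely where $\varphi$ halts with value $1$. Since $A$ is density-1, the new domain on each $h\gg f$ is the intersection of the old density-1 domain with $A$, which is density-1 by Observation \ref{int}, so I may assume $\varphi$ outputs only $1$'s. I would then define
\[
\psi^g(n)=1\iff\exists k\in\N\ \exists\sigma\in\N^k\,\bigl[(\forall i<k)\,\sigma(i)\geq g(i)\ \text{and}\ \varphi^\sigma(n)\downarrow=1\bigr],
\]
computed from $g$ and $n$ by dovetailing the search over $(k,\sigma)$ together with a time bound on $\varphi^\sigma(n)$.

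Each required property should then reduce to a short verification. Bigger-is-less is immediate, since any witness $\sigma$ for $\psi^h(n)=1$ with $h\gg g$ automatically satisfies $\sigma(i)\geq h(i)\geq g(i)$ and hence witnesses $\psi^g(n)=1$. Correctness of $\psi^g$ for $g\gg f$ is obtained by extending a witness $\sigma$ to the total function $h$ that agrees with $\sigma$ on $[0,|\sigma|)$ and with $f$ elsewhere: the inequalities $\sigma(i)\geq g(i)\geq f(i)$ on the initial segment give $h\gg f$, and since $\varphi^\sigma(n)=1$ has use at most $|\sigma|$, the extension property of Turing functionals gives $\varphi^h(n)=1$, so $n\in A$ by the hypothesis on $\varphi$. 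Density-1 of $\dom(\psi^g)$ for $g\gg f$ runs in the other direction: for each $n$ with $\varphi^g(n)\downarrow=1$ at some use $u$, the prefix $\sigma:=g\upharpoonright u$ already witnesses $\psi^g(n)=1$, so $\dom(\psi^g)\supseteq\dom(\varphi^g)$, which is density-1 by hypothesis.

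I do not expect a genuine obstacle beyond the initial search for the correct form of $\psi$; once one sees that the right analog of the ``consistent partial oracle'' trick of Lemma \ref{moreismore} is ``finite sequence pointwise above $g$'', and in particular that phrasing the definition as an \emph{existential} (rather than a universal) over such $\sigma$ is what lets the correctness witness and the density witness both live inside the same construction, each of the three verifications is essentially one line.
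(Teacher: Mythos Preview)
Your proposal is correct and is essentially the same argument as the paper's: the paper defines $\psi^g(n)=1$ by searching over all $h\gg g$ with $\varphi^h(n)=1$, noting that only finitely much of $h$ is used, which is exactly your search over finite sequences $\sigma$ pointwise above $g$. The only cosmetic difference is in the correctness step, where the paper invokes transitivity of $\gg$ (any $h\gg g\gg f$ dominates $f$) while you explicitly extend $\sigma$ by $f$ to exhibit such an $h$; these are the same verification.
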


\begin{proof}

We first use Lemma \ref{output1} to replace $\varphi$ with a $\varphi$ that outputs only 1's when it halts.

To modify this $\varphi$ to a $\psi$ which is a bigger-is-less functional, we define $\psi$ so that $\psi^g(n)$ searches over all $h\gg g$ for some $h$ such that $\varphi^h(n)=1$, and if it finds such an $h$, then $\psi^g(n)=1$. Otherwise, $\psi^g(n)$ does not halt.

First, note that if there is an $h\gg g$, such that $\varphi^h(n)=1$, then eventually this will be learned, since only a finite amount of $h$ is used in the computation, and there are only countably many initial segments of functions that dominate $g$, and so $\psi$ can emulate $\varphi$ on all of these in parallel.

Second, note that if $h\gg g$, then $\text{dom}(h)\subseteq\text{dom}(g)$. This is because the set of functions that dominate $h$ is a subset of the set of functions which dominate $g$ (since $\gg$ is a transitive relationship.)

Finally, we show that for all $g\gg f$, $\psi^g$ is a generic computation of $A$. To show this, we must show two things. First we must show that $\text{dom}(\psi^g)$ is density-1, and second, we must show that $\text{dom}(\psi^g)\subseteq A$.

We know $\text{dom}(\psi^g)$ is density-1 because it contains $\text{dom}(\varphi^g)$ (because $g\gg g$, according to our definition of $\gg$.) Also, $\text{dom}(\psi^g)\subseteq A$ because, for any $h\gg g$, $h$ also dominates $f$. Then, $\varphi^h$ is a generic computation of $A$, and so, in particular, if $\varphi^h(n)=1$, then $n\in A$.

\end{proof}

\begin{defn}
\normalfont

Let $S$ be a real. Then we say that $S$ is \emph{sparse by design} if for every $n$, $|S\cap [5^n,5^{n+1}-1]|\leq2^{n+1}$.

\end{defn}

In other words, a sparse by design real is allowed to have at most 2 of the first 4 numbers, 4 of the next 16 numbers, 8 of the next 64 numbers, and so on.

\begin{lem}\label{sparseissparse}

If $\N\setminus A$ is sparse by design, then $A$ is density-1.

\end{lem}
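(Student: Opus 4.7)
The plan is to directly compute the density of $S:=\mathbb{N}\setminus A$ and show it tends to $0$, which is equivalent to showing $A$ is density-1. The key observation is that the block $[5^n, 5^{n+1}-1]$ has length $4\cdot 5^n$, so the sparseness constraint $|S\cap[5^n,5^{n+1}-1]|\le 2^{n+1}$ means the proportion of $S$ inside each block decays geometrically like $(2/5)^n$.

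First I would handle a given threshold $N$ by locating the unique $n$ with $5^n\le N<5^{n+1}$. Since all elements of $S$ below $N$ lie in blocks indexed $0,1,\dots,n$, I would bound
$$|S\cap N| \le \sum_{k=0}^{n} 2^{k+1} = 2^{n+2}-2 < 2^{n+2}.$$
Combined with $N\ge 5^n$, this gives
$$\frac{|S\cap N|}{N} \le \frac{2^{n+2}}{5^n} = 4\left(\frac{2}{5}\right)^n.$$
As $N\to\infty$, the corresponding $n\to\infty$, so the right-hand side tends to $0$. Hence $|A\cap N|/N\to 1$, which is exactly the definition of $A$ being density-1.

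There is no real obstacle here; the lemma is a straightforward geometric-series calculation that simply records why the ``sparse by design'' definition was chosen with parameters $5^n$ and $2^{n+1}$: the ratio $2/5<1$ is what forces the block densities, and hence the cumulative density of $S$, to decay to zero. The only minor care needed is to sum over all blocks up to and including the one containing $N$ (rather than just the last one), but this contributes only a constant factor and does not affect the limit.
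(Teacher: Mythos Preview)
Your proof is correct and is essentially the same as the paper's: both locate $N$ in the interval $[5^n,5^{n+1})$, bound the number of missing elements below $N$ by the geometric sum $\sum_{k=0}^n 2^{k+1}=2^{n+2}-2<2^{n+2}$, and divide by $N\ge 5^n$ to obtain a bound that tends to the right limit. The only cosmetic difference is that you phrase it as showing the density of $S=\N\setminus A$ tends to $0$, whereas the paper directly bounds $\frac{|A\cap n|}{n}$ from below.
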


\begin{proof}

The proof is a straightforward limit calculation.

If $5^m\leq n\leq 5^{m+1}$, then $\frac{|{A\cap n}|}{n}\geq \frac{n-2^{m+2}}{n}\geq\frac{5^m-2^{m+2}}{5^m}$. (This is because $A$ is missing at most $2+4+\dots+2^{m+1}=2^{m+2}-2$ many elements less than  $5^{m+1}$.) 

Since $\lim_{m\rightarrow\infty}\frac{5^m-2^{m+2}}{5^m}=1$, we may conclude that $\lim_{n\rightarrow\infty}\frac{|{A\cap n}|}{n}=1$.

\end{proof}

The basic idea of these sparse by design sets is that they will give us a tool for generating sufficiently many sufficiently general density-1 sets that we can diagonalize against all possible functions, but they are also sufficiently structured that we can cut and paste them together conveniently. The next lemma will give us the tool that we need to cut them into smaller sparse sets.

\begin{lem}\label{cutting}

Let $\varphi$ be a bigger-is-less functional. Let $S$ be a sparse set, and let $S=S_0\cup S_1$. Assume there exists an $f$ such that $\dom(\varphi^f)\cap S$ is empty. Let $n$ be minimal such that $f(0)=n$ for some such $f$.

\noindent Let $n_i$ be minimal such that $\exists f\, \, \dom(\varphi^{f_i})\cap S_i$ is empty.

\noindent Then, either $n_0=n$ or $n_1=n$.

\end{lem}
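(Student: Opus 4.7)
The plan is to argue by contradiction, combining the two witnesses via a pointwise maximum and using the bigger-is-less property to recover a witness for $S$ that beats the minimum $n$.

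First I would observe the easy direction: since $S_i \subseteq S$, any function $f$ with $\dom(\varphi^f) \cap S = \emptyset$ is automatically a witness for $S_i$. Hence $n_0 \leq n$ and $n_1 \leq n$. So the statement reduces to showing we cannot have both $n_0 < n$ and $n_1 < n$ simultaneously.

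Suppose for contradiction that $n_0 < n$ and $n_1 < n$. Fix witnesses $f_0, f_1$ with $f_i(0) = n_i$ and $\dom(\varphi^{f_i}) \cap S_i = \emptyset$. Define the pointwise maximum $g(k) = \max(f_0(k), f_1(k))$. Then $g \gg f_0$ and $g \gg f_1$, so by the bigger-is-less property of $\varphi$,
\[
\dom(\varphi^g) \subseteq \dom(\varphi^{f_0}) \cap \dom(\varphi^{f_1}).
\]
In particular $\dom(\varphi^g) \cap S_i = \emptyset$ for $i = 0, 1$, and therefore $\dom(\varphi^g) \cap S = \emptyset$. But $g(0) = \max(n_0, n_1) < n$, which contradicts the minimality of $n$ as the least value of $f(0)$ among functions with $\dom(\varphi^f) \cap S = \emptyset$.

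There is not really a hard step here; the only thing to be careful about is orienting the bigger-is-less inclusion the right way (more oracle information shrinks the domain, so the max of two successful functions is still successful against the union $S_0 \cup S_1$). Everything else is bookkeeping at coordinate $0$.
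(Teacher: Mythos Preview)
Your argument is correct and essentially identical to the paper's own proof: both observe $n_i \leq n$ since $S_i \subseteq S$, then suppose $n_0, n_1 < n$, take the pointwise maximum of the two witnesses, and use the bigger-is-less property to derive a contradiction at coordinate $0$. There is no meaningful difference in approach.
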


\begin{proof}

We first mention that $n_0$ and $n_1$ are both defined, because, if $\dom(\varphi^f)\cap S$ is empty, $\dom(\varphi^f)$ also does not intersect either $S_0$ or $S_1$. Thus, there exists an $f$ for each of those sets, and so there is a smallest value for $f(0)$. Also, since any $f$ that works for $S$ also works for $S_0$ and $S_1$, we have that $n_0\leq n$ and $n_1\leq n$.

We now prove the lemma by contradiction.

Assume $n_0<n$ and $n_1<n$.

For each $i$, fix $f_i$ such that $f_i(0)=n_i$, and $\dom(\varphi^{f_i})\cap S_i$ is empty.

Then, define $f$ so that, for each $m$, $f(m)=\max\lbrace f_0(m),f_1(m)\rbrace$.

Now, we claim that $\dom(\varphi^f)\cap S$ is empty.

To prove this, note that $\varphi$ is a bigger-is-less functional, so in particular, since $f\gg f_i$, we have that, for each $i$, $\dom(\varphi^f)\cap S_i$ is empty. So, because $S=S_0\cup S_1$, $\dom(\varphi^f)\cap S$ is empty.

Thus, there is an $f$ with $f(0)=\max\lbrace f_0(0),f_1(0)\rbrace<n$ such that $\dom(\varphi^f)\cap S$ is empty, contradicting our definition of $n$.

\end{proof}

We are now ready to prove Proposition \ref{d1ismore}, but to help clarify the construction, we first present the strategy for dealing with a single $\varphi$. Combining these constructions will be straightforward, since we are in no way confined to working uniformly.

\begin{lem}\label{d1ismore1}

Let $\varphi$ be a bigger-is-less functional. Then, either there is a sparse by design set $S$ such that for every $f$ $\dom(\varphi^f)\cap S$ is nonempty, or there is a $g$ so that for any $h\gg g$, $\dom(\varphi^h)$ is empty.

\end{lem}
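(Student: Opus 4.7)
Suppose option (B) fails, so for every function $g$ we have $\dom(\varphi^g) \neq \emptyset$ (which by bigger-is-less is equivalent to the stated failure of (B)). For each $T \subseteq \N$ let $\mu(T) := \min\{g(0) : \dom(\varphi^g) \cap T = \emptyset\}$, with $\mu(T) = \infty$ if no such $g$ exists. Using Lemma~\ref{cutting} together with the observation that the pointwise maximum of two dominators jointly avoids the union of their targets (by two applications of bigger-is-less), one obtains $\mu(T_0 \cup T_1) = \max(\mu(T_0),\mu(T_1))$, and hence $\mu(T) = \max_{k \in T} \mu(\{k\})$ for every finite $T$.

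For each $n$ I would choose $k_n^\ast \in I_n := [5^n, 5^{n+1}-1]$ so as to maximize $\mu(\{k\})$ over $k \in I_n$, and set $S := \{k_n^\ast : n \geq 0\}$. Then $|S \cap I_n| \leq 1 \leq 2^{n+1}$, so $S$ is sparse-by-design. To see $\mu(S) = \infty$, suppose instead there were $g$ with $g(0) = \mu(S) = N < \infty$ and $\dom(\varphi^g) \cap S = \emptyset$; then $g$ would simultaneously witness $\mu(\{k_n^\ast\}) \leq N$ for every $n$, forcing $\sup_n \mu(\{k_n^\ast\}) \leq N$. Thus it suffices to prove $\sup_n \mu(\{k_n^\ast\}) = \infty$, and by the maximality of $k_n^\ast$ this is the same as $\sup_k \mu(\{k\}) = \infty$.

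The heart of the argument is the final step: showing $\sup_k \mu(\{k\}) = \infty$ under the assumption that (B) fails. Arguing by contradiction, assume $\mu(\{k\}) \leq M$ for every $k$, and fix witnesses $f_k$ with $f_k(0) \leq M$ and $k \notin \dom(\varphi^{f_k})$. By infinite pigeonhole one passes to an infinite subsequence on which $f_k(0) = M_0$ is constant. A K\"onig's-lemma argument on the tree whose level-$n$ nodes are sequences $(a_0, \ldots, a_n)$ with $a_0 = M_0$ and with $\{k : f_k(i) \leq a_i\text{ for all } i \leq n\}$ infinite produces an infinite branch, defining a function $g$. This $g$ pointwise dominates $f_k$ for cofinally many $k$, so by bigger-is-less $k \notin \dom(\varphi^g)$ for each such $k$; a further dovetailing argument, iteratively adjoining witnesses for the remaining indices while preserving the K\"onig property, yields $\dom(\varphi^g) = \emptyset$, contradicting the failure of (B).

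The principal obstacle I anticipate is making the K\"onig's-lemma step rigorous. The tree described above is not finitely branching, and at some node it is a priori possible that no single value of $a_{n+1}$ preserves the "infinitely many witnesses" property. Resolving this is the crux of the proof and will require exploiting the bigger-is-less freedom, namely that any $f'_k \gg f_k$ is still a valid witness for $k$: by strategically replacing each $f_k$ with a larger witness before the pigeonhole at each coordinate, one can ensure that the refined tree admits an infinite path. Combining this with a dovetailing over all $k$ --- so that the extracted $g$ genuinely dominates enough witnesses to cover every index --- produces the desired $g$ with empty $\varphi$-domain.
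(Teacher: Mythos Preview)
Your reduction to ``$\sup_k \mu(\{k\}) = \infty$'' is not a gap to be filled in --- it is simply false. Consider the bigger-is-less functional defined by $\varphi^f(k)\downarrow$ iff $k > f(1)$. Then $\dom(\varphi^f) = (f(1),\infty)$ is always nonempty, so option (B) fails. But for each $k$ we may take $g(0)=0$ and $g(1)=k$ to witness $k\notin\dom(\varphi^g)$, so $\mu(\{k\})=0$ for every $k$ and $\sup_k\mu(\{k\})=0$. Thus the claim you are trying to establish in the ``heart of the argument'' is false, and no amount of K\"onig/dovetailing cleverness can rescue it. (Your $S$ in this example does happen to satisfy $\mu(S)=\infty$, since any infinite set meets every cofinite set --- but your proposed \emph{reason} for this is wrong, and the correctness of $S$ here is an accident of the example rather than a consequence of your argument.)

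The conceptual error is that $\mu$ tracks only the zeroth coordinate of the witness. Bounding $g(0)$ uniformly across singletons tells you nothing about whether a single $g$ can avoid all of $S$ simultaneously, because the obstruction may live on later coordinates. The paper's proof faces exactly this issue and resolves it by building $g$ one coordinate at a time: given an initial segment $\sigma$ with the property that every sparse-by-design $S$ can be avoided by some $f\supset\sigma$, one shows that the set $\{n_S\}$ of minimal next values is \emph{bounded}, so $\sigma$ can be extended by one more value while preserving the property. The boundedness argument is where the sparse-by-design structure actually does work: from an unbounded sequence $S_i$ of sparse sets one repeatedly halves each $S_i$ on every interval $[5^k,5^{k+1})$ (using Lemma~\ref{cutting} to retain the $n_{S_i}$ values) and pigeonholes, assembling the discarded halves into a single sparse-by-design $T$ with $n_T$ undefined --- a contradiction. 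Your singleton-based construction of $S$ bypasses this machinery and, as the counterexample shows, cannot succeed without it.
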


\begin{proof}

Throughout this proof, the variable $S$ is assumed to only range over sets that are sparse by design.

Let $\varphi$ be a bigger-is-less functional, and assume that for every $S$, there is an $f$ so that $\dom(\varphi^f)\cap S$ is empty.
We will now build a $g$ such that for any $h\gg g$, $\dom(\varphi^h)$ is empty. We build $g$ by finite approximation.

To build $g$, we first prove the following statement.

\vspace{5pt}

\noindent\underline{Claim:} If $m\in\N$, and $\sigma$ is a finite partial function, with $\dom(\sigma)=m$,

\noindent and if $\forall S\ \exists f(f\upharpoonright m=\sigma)\wedge(\dom(\varphi^f)\cap S$ is empty),

\noindent then, there exists an $n$ such that $\forall S\ \exists f(f\upharpoonright m=\sigma)\wedge (f(m)=n)\wedge(\dom(\varphi^f)\cap S$ is empty).

\vspace{5pt}

\noindent\underline{Proof of Claim:} Assume $m\in\N$, $\dom(\sigma)=m$, $\forall S\ \exists f(f\upharpoonright m=\sigma)\wedge(\dom(\varphi^f)\cap S$ is empty).

For each sparse by design $S$, let $n_S$ be the smallest $n$ such that $\exists f(f\upharpoonright m=\sigma)\wedge (f(m)=n)\wedge(\dom(\varphi^f)\cap S$ is empty).

Then, we claim that $\lbrace n_S\rbrace$ is a bounded set.

If not, fix a sequence $S_i$ such that $n_{S_i}\geq i$.

Now, we will use Lemma \ref{cutting} to cut and paste the $S_i$ together into a new sparse by design set $T$ such that $n_T$ is undefined, which contradicts our assumption on $\sigma$.

To do this, for each $S_i$, let the ``first half" of $S_i$ be the union, over all $k$, of the first $2^k$ many elements of $S_i\cap [5^k,5^{k+1}-1]$, and let the second half of $S_i$ be the rest of the elements of $S_i$. Let $T_{i,0}$ be equal to either the first half, or the second half of $S_i$, whichever one will make $n_{T_{i,0}}=n_{S_i}$. (One of these will work, by Lemma \ref{cutting}.)

(It is possible that the second half is dramatically smaller than the first half, or even empty. Also, if, for some $k$, $S_i$ did not have $2^k$ elements of $[5^k,5^{k+1}-1]$, then just put all of those elements into $T_{i,0}$.)

The idea now is to continue to cut the $S_i$ in half, and then to paste together half of one, one fourth of another, one eighth of the next, and so on. Unfortunately, the $T_{i,0}$ each have at most one element between 1 and 4, and since we cannot cut this one element in half, we use the pigeonhole principle to get them to all agree on which first element to use.

So now, we put all of $T_{0,0}$ into $T$, and then, by the pigeonhole principle, fix $m_0\in [1,4]$ such that there are infinitely many $i$ such that either $m_0\in T_{i,0}$, or $T_{i,0}\cap [1,4]$ is empty. Then, remove the other $T_{i,0}$ from our list, and also remove $T_{0,0}$ from the list, and define $S_{i,0}$ to be the $i$th set that is left on the list.

We are now ready to cut each of our sets in half a second time, and just allow $m_0$ to be in both halves.

We now formalize this argument.

\vspace{5pt}

Fix $k$.

Assume that we have a sequence of sets $S_{i,k-1}, i\in\N$, such that for each $i$, $n_{S_{i,k-1}}\geq i+k$. Assume also that for $l<k$ we have numbers $m_l$ such that for each $i,l$, $S_{i,k-1}\cap[5^l,5^{l+1}-1]\subseteq\lbrace m_l\rbrace$. Also, assume that for $l\geq k$, $|S_{i,k-1}\cap [5^l,5^{l+1}-1]|\leq2^{l+1-k}$

Then, there exists a sequence of sets $S_{i,k}, i\in\N$, and a number $m_k$, such that the following things hold.
{\bf(1)} For each $i$, $n_{S_{i,k}}\geq (i+k+1)$.
{\bf(2)} For each $i$, and each $l\leq k$, $S_{i,k}\cap[5^l,5^{l+1}-1]\subseteq\lbrace m_l\rbrace$.
{\bf(3)} For each $l>k$, $|S_{i,k}\cap [5^l,5^{l+1}-1]|\leq2^{l-k}$.

The proof is effectively the same as for the case with $k=0$, and $S_{i,-1}=S_i$, but we present it here for completeness

For each $i$, let the ``stage $k$ first half" of $S_{i,k-1}$ be the union, over all $l\geq k$, of the first $2^{l-k}$ many elements of $S_{i,k-1}\cap [5^l,5^{l+1}-1]$, and let the second half of $S_i$ be the rest of the elements of $S_i$. Let $T_{i,k}$ be equal to either the stage $k$ first half, or the stage $k$ second half of $S_i$, whichever one will make $n_{T_{i,k}}=n_{S_{i,k-1}}$.

(The ``first halves" of the sets as defined here will not have any elements less than $5^k$, but that is not a problem for us.)

Note now that for each $l>k,i$, $|T_{i,k}\cap [5^l,5^{l+1}-1]|\leq2^{l-k}$.

Then, by the pigeonhole principle, fix $m_k\in [5^k,5^{k+1}-1]$ such that there are infinitely many $i$ such that either $m_k\in T_{i,k}$, or $T_{i,k}\cap [5^k,5^{k+1}-1]$ is empty. Then, remove the other $T_{i,k}$ from our list, and also remove $T_{0,k}$ from the list, and define $S_{i,k}$ to be the $i$th set that is left on the list.

Then, we may conclude the following.

\noindent{\bf(1)} For each $i$, $n_{S_{i,k}}\geq i+k+1$ because $S_{i,k}=T_{j,k}$ for some $j>i$, and $n_{T_{j,k}}=n_{S_{j,k-1}}$ by construction.

\noindent{\bf(2)} For each $i,l\leq k$, $S_{i,k}\cap[5^l,5^{l+1}-1]\subseteq\lbrace m_l\rbrace$ because that was how we chose the $S_{i,k}$ as a subsequence of the $T_{i,k}$.

\noindent{\bf(3)} For each $l>k$, $|S_{i,k}\cap [5^l,5^{l+1}-1]|\leq2^{l-k}$ because of the way that we cut the $S_{i,k-1}$ in half when building the $T_{i,k}$.

This concludes the proof of our claim.

\vspace{5pt}

Now, we construct $T$ by simply letting $T$ be the union, over all $k$, of the $T_{0,k}$ that we used in our construction. We claim two things. First, $T$ is a sparse by design set. Second, $n_T$ is undefined.

To show that $T$ is sparse by design, we show that for each $l$, $|T\cap[5^l,5^{l+1}-1]|\leq2^{l+1}$. This is because, for each $k\leq l$, $|T_{0,k}\cap [5^l,5^{l+1}-1]|\leq2^{l-k}$. Also, for each $k>l$, $T_{0,k}\cap [5^l,5^{l+1}-1]\subseteq\lbrace m_l\rbrace$. Thus, $|T\cap [5^l,5^{l+1}-1]|\leq 1+1+2+\dots +2^l=2^{l+1}$.

To show that $n_T$ is undefined, note that for each $k\in N$, $T_{0,k}\subseteq T$ and that $n_{T_{0,k}}=n_{S_{0,k}}\geq k+1$. Then, $n_T\geq n_{T_{0,k}}$ because any $f$ that does not halt on any of $T$ also does not halt on $T_{0,k}$. Thus, for every $k\in\N$, $n_T>k$. So $n_T$ cannot be any natural number, and thus is undefined.

This provides us the contradiction that we need in order to prove that $\lbrace n_S\rbrace$ is a bounded set.

\vspace{5pt}

Thus, we may conclude that if $m\in\N$, and $\sigma$ is a finite partial function, with $\dom(\sigma)=m$, and

\noindent if $\forall S\ \exists f(f\upharpoonright m=\sigma)\wedge(\dom(\varphi^f)\cap S$ is empty),

\noindent then, there exists an $n$ such that $\forall S\ \exists f(f\upharpoonright m=\sigma)\wedge (f(m)=n)\wedge(\dom(\varphi^f)\cap S$ is empty).

\vspace{5pt}

We are now ready to define the function $g$ by induction.

Start with $\sigma$ being the empty string, and at each stage $s$, let $\sigma$ be the portion of $g$ that has been constructed so far, and let $g(s)$ be the $n$ that is given to us by this claim.

We claim that $\dom(\varphi^g)$ is empty.

To show this, let $n>0$, and assume $n\in\dom(\varphi^g)$. Let $S=\lbrace n\rbrace$. It is clear that $S$ is sparse by design. Let $\sigma$ be the initial segment of $g$ including all of the numbers that were queried by $\varphi$ during the computation of $\varphi^g(n)$. Let $m=\dom(\sigma)$. By construction of $g$, $\exists f(f\upharpoonright m=\sigma)\wedge(\dom(\varphi^f)\cap S$ is empty), but this is a contradiction, because for any $f$, if $f\upharpoonright m=\sigma$ then $\varphi^f(n)\downarrow$, because it is equal to $g$ on the part of $g$ that was queried to make $\varphi^g(n)\downarrow$.

Thus, $\dom(\varphi^g)$ is empty, and we have proved the lemma.

\end{proof}

We are now ready to prove Proposition \ref{d1ismore}.

The basic idea of the proof is to build $A$ by splitting $\N$ into infinitely many pieces, each of positive density. Each piece will be used to diagonalize against a specific $\varphi$ using Lemma \ref{d1ismore1}.

\begin{proof}

For each $e$, we will build a real $A_e$, and then we will let $A$ be the real given by $(2n+1)2^e\in A\longleftrightarrow n\in A_e$. Note that $A$ will be density-1 if and only if each $A_e$ is density-1.

(If $A_e$ is not density-1, then there is some $\epsilon>0$ so that the liminf of its initial segment densities is $1-\epsilon$. Then the liminf of the initial segment densities of $A$ must be $\leq1-2^{-e}\epsilon$. The converse is similar.)

Now, for each $e$, we will construct $A_e$ to ensure that there is no $f$ such that $\forall g\gg f\ \varphi^g_e$ is a generic computation of $A$. By Lemma \ref{biggerisless}, we need only concern ourselves with bigger-is-less functionals, so if $\varphi_e$ is not a bigger-is-less functional, then we simply let $A_e=\N$. 

If $\varphi_e$ is a bigger-is-less functional, then let $\psi_e$ be the functional where $\psi_e^X(n)=\varphi_e^X((2n+1)2^e)$. Note then that if $\varphi_e^g$ is a generic computation of $A$, then $\psi_e^g$ is a generic computation of $A_e$. Note also that $\psi_e$ is also a bigger-is-less functional.

Thus, we may use Lemma \ref{d1ismore1} to show that either there is a sparse by design set $S$ such that for every $f$ $\dom(\psi_e^f)\cap S$ is nonempty, or there is a $g$ so that for any $h\gg g$, $\dom(\psi_e^h)$ is empty.

In the first case, let $A_e$ be $N\setminus S_e$, where $S_e$ is the set $S$ that is given to us by the lemma. Then for every $f$ $\dom(\psi_e^f)\cap S_e$ is nonempty, and so, for any $f$, $\psi_e^f$ is not a generic computation of $A_e$, since it halts somewhere outside of $A_e$, and we require that bigger-is-less functionals only output 1's when they halt. Thus, we have ensured that, for any $f$, $\varphi_e^f$ is not a generic computation of $A$.

In the second case, let $A_e=\N$, and let $g_e$ be the function $g$ that is given to us by the lemma. For any $f$, $\psi_e^f$ is not a generic computation of $A_e$, because $\sup(f,g_e)\gg g_e$. Thus, by the lemma, $\dom(\psi_e^{\sup(f,g_e)})$ is empty, and thus $\psi_e^{\sup(f,g_e)}$ is not a generic computation of $A_e$. Also, $\sup(f,g_e)\gg f$, so there is a $g\gg f$ such that $\psi_e^g$ is not a generic computation of $A_e$. Therefore, $\varphi_e^g$ is also not a generic computation of $A$.

Thus, we have built an $A$ where, for every bigger-is-less $\varphi$, for every $f:\N\rightarrow\N$, there is a $g\gg f$ such that $\varphi^g$ is not a generic computation of $A$, and so, by Lemma \ref{biggerisless}, for every $\varphi$, and every $f$, there is a $g\gg f$ such that $\varphi^g$ is not a generic computation of $A$.

\end{proof}

We have shown that there are density-1 generic degrees whose generic information cannot be recovered from a sufficiently fast growing function. Thus, density-1 generic degrees can, in some sense, be thought of as a generalization of fast growing functions.

We now move on to discuss the structure of the generic degrees as a whole, and to see how questions about the density-1 degrees fit in to the larger picture.

\subsection{Structure of the generic degrees}

The main purpose of this section is to address the following two questions about generic degrees.

\begin{question}\label{q1}
Do there exist minimal degrees in the generic degrees?
\end{question}

In other words, is there a generic degree {\bf a} such that ${\bf a}>_g0$, and for all generic degrees {\bf b}, if ${\bf a}\geq_g{\bf b}>_g0$, then ${\bf a}={\bf b}$?

\begin{question}\label{q2}
Do there exist minimal pairs in the generic degrees?
\end{question}

In other words, do there exist generic degrees {\bf a} and {\bf a} such that ${\bf a}>_g0$, ${\bf b}>_g0$, and for all generic degrees {\bf c}, if ${\bf a}\geq_g{\bf c}$, and ${\bf a}\geq_g{\bf c}$, then ${\bf c}=0$?

To aid in our discussion, we also present an open question concerning the degrees of the density-1 sets, which will turn out to be relevant to Questions \ref{q1} and \ref{q2}.

\begin{question}\label{q3}

Is it true that for every nonzero generic degree {\textbf b} there exists a nonzero density-1 generic degree {\textbf a} such that ${\textbf b}\geq_g{\textbf a}$?

\end{question}

We will see that a ``yes" answer to Question \ref{q3} would imply that the answer to Question \ref{q1} is no, and that a ``no" answer to Question \ref{q3} would imply that the answer to Question \ref{q2} is yes.

Before we do this, we give a brief overview on what is known about the degrees that have a density-1 degree below them. In a previous paper \cite{I}, we showed that there are no minimal pairs for generic computation (not generic reduction.) Translated into the language of generic reduction, this can be phrased as follows.

\begin{thm}\label{nominimalpair}
\normalfont{(I.)} \cite{I}

\emph{Let $A, B$ be nonrecursive reals. Then $\mathcal{R}(A)$ and $\mathcal{R}(B)$ do not form a minimal pair in the generic degrees.} 

\end{thm}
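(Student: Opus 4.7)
By Observation~\ref{compisred}, the conclusion is equivalent to producing a real $C$ that is not generically computable but is generically computed by each of $A$ and $B$ separately. I would target $C$ of density $1$; then by Lemma~\ref{output1}, ``$A$ generically computes $C$'' amounts to ``$A$ Turing-computes a density-$1$ subset of $C$.'' So the task reduces to constructing density-$1$ sets $D_A \leq_T A$ and $D_B \leq_T B$ whose union $C = D_A \cup D_B$ contains no recursive density-$1$ subset; the latter condition is exactly non-generic-computability for a density-$1$ real.

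I would build $C$ (equivalently its density-$0$ hole set $H = \N \setminus C$) in stages. At stage $e$, to defeat $\varphi_e$ as a candidate generic computation of $C$, locate a large $m$ at which $\varphi_e(m)\downarrow = 1$ (we may assume $\varphi_e$ only outputs $1$'s, by Lemma~\ref{output1}) and add $m$ to $H$. Spacing the $m$'s far apart keeps $H$ of density $0$ and $C$ of density $1$, defeating every recursive $\varphi_e$ simultaneously. If $\varphi_e$ never halts on any such large $m$, then $\mathrm{dom}(\varphi_e)$ was not density-$1$ to begin with and no action is needed.

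The heart of the argument, and the main obstacle, is that $A$ and $B$ must independently generically compute this $C$. Since the enumeration of $H$ is $\Sigma_1$, a naive recovery of a density-$1$ subset of $C$ would require $\emptyset'$ rather than $A$ or $B$; and $A$ and $B$ may be Turing-incomparable, sharing no nontrivial common Turing information. I would bypass this by coding the diagonalization stages redundantly: each $m$ placed into $H$ is embedded into a position pattern whose ``safety certificate'' can be read off from $A$ in one way and from $B$ in an entirely different way, exploiting the nonrecursiveness of each oracle to out-compute the recursive $\Sigma_1$-approximation of $H$ on a density-$1$ set of positions. Symmetrically recovered density-$1$ sets $D_A \leq_T A$ and $D_B \leq_T B$ disjoint from $H$ then furnish the required generic computations. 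Balancing the density-$0$ budget for $H$ against the need for two independent, recursively-indexed density-$1$ covers is the technical core of the construction, and is where the hypothesis that \emph{both} $A$ and $B$ are nonrecursive (not merely their join) is used.
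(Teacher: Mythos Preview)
The paper does not prove this theorem here; it is quoted from \cite{I} and then sharpened (again by citation) to Proposition~\ref{nominimalpair1}, which is exactly the statement you are aiming for: a density-$1$ real $C$ that is not generically computable yet is generically computable from each of $A$ and $B$. So there is no in-paper argument to compare against, but your overall shape---build a density-$1$ $C$ with no density-$1$ r.e.\ subset, together with density-$1$ sets $D_A\leq_T A$ and $D_B\leq_T B$ inside $C$---matches precisely what the cited result delivers.

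That said, what you have written is a plan, not a proof. The step you yourself label ``the technical core'' is not carried out: phrases like ``safety certificate,'' ``position pattern,'' and ``exploiting the nonrecursiveness of each oracle to out-compute the recursive $\Sigma_1$-approximation of $H$'' name the difficulty without supplying a mechanism. The obstacle is genuine: $H$ is produced by a $\Sigma^0_1$ diagonalization, an arbitrary nonrecursive $A$ need not compute $\emptyset'$ or any modulus for that enumeration, and so there is no automatic way for $A$ alone to certify, on a set of density $1$, that a given $n$ will never enter $H$. You need a specific device that converts ``$A$ is not recursive'' into ``$A$ can recognize a density-$1$ set of safe positions,'' and an independent such device for $B$, with both operating against the \emph{same} hole set $H$. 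Nothing in the proposal provides this; until it does, the argument has a gap at exactly the point where all the work lies.
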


An analysis of the proof, however, yields a stronger result that is also more relevant to the current discussion.

\begin{prop}\label{nominimalpair1}
\normalfont{(I.)} \cite{I}

\emph{Let $A, B$ be nonrecursive reals. Then there is a $C$ such that $C$ is density-1, $C$ is not generically computable, $\mathcal{R}(A)\geq_gC$, and $\mathcal{R}(B)\geq_g~C$.}

\end{prop}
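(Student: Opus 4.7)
By Observation~\ref{compisred}, the hypotheses $\mathcal{R}(A)\geq_g C$ and $\mathcal{R}(B)\geq_g C$ are equivalent to $A$ and $B$ each generically computing $C$. Since the $C$ we build will be density-1, Lemma~\ref{output1} lets each of these conditions be recast as the existence of a partial recursive functional outputting only $1$'s whose domain is a density-1 subset of $C$: equivalently, $C$ must contain an $A$-r.e.\ density-1 set $D_A$ and a $B$-r.e.\ density-1 set $D_B$. The natural simplification is to take $C:=D_A\cup D_B$, making those containments automatic; density-1 of $C$ is then also automatic. By Lemma~\ref{output1} applied with oracle $\emptyset$, the requirement that $C$ \emph{not} be generically computable becomes: no r.e.\ density-1 set is contained in $C$.

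The plan is to build $D_A$ and $D_B$ simultaneously by finite extension in $\omega$ stages, with one requirement
\[
N_e:\ \text{the $e$-th r.e.\ set }W_e\text{ is not a density-1 subset of }C
\]
per $e$. At stage $e$, if $W_e$ already appears not to be density-1 there is nothing to do; otherwise we look for a large fresh $n_e\in W_e$ and commit to $n_e\notin C$, i.e.\ we refuse to enumerate $n_e$ into either $D_A$ (from $A$) or $D_B$ (from $B$). All other numbers touched at this stage are enumerated into both sets by default. Because each $N_e$ is satisfied by a single witness, the exceptional set $\bar C$ is sparse, and by spreading the stages along a fast-growing sequence of blocks one arranges $\bar C$ to have density $0$, so $C$, $D_A$, and $D_B$ are all density-1 and $D_A,D_B\subseteq C$.

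The main obstacle is that the witnesses $n_e$ depend on the enumerations of the $W_e$ (so they are $\emptyset'$-, not $\emptyset$-, effective), yet the partial functionals enumerating $D_A$ and $D_B$ must be recursive in $A$ and $B$ respectively. Here is where the nonrecursiveness of $A$ and $B$ is essential: although neither $A$ nor $B$ need compute $\bar C$ exactly, each can enumerate a density-0 r.e.\ \emph{superset} of $\bar C$ in its own oracle --- an $A$-r.e.\ density-0 set $S_A\supseteq \bar C$, and symmetrically a $B$-r.e.\ $S_B\supseteq \bar C$ --- using an $A$-recursive procedure that is finer than any recursive one, and likewise for $B$. Then $D_A:=\N\setminus S_A$ and $D_B:=\N\setminus S_B$ are as required. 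The crucial balancing act is to choose each $n_e$ in a region where the $A$-approximation and the $B$-approximation can both flag it, but where $\emptyset$ cannot predict which points will be flagged: this is exactly what prevents $\bar C$ from being contained in any $\Pi^0_1$ density-0 set, and therefore prevents $C$ from containing any r.e.\ density-1 set. The hard part is the density bookkeeping that simultaneously (i) keeps $S_A,S_B$ density-0, (ii) arranges $n_e\in S_A\cap S_B$ for every $e$, and (iii) ensures that the positions chosen are $A$- and $B$-visible without being $\emptyset$-visible; once this is set up, verifying the four conclusions is routine.
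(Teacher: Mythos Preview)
This proposition is not proved in the paper; it is quoted from \cite{I} with the remark that it follows from an analysis of the argument there. So there is no proof here to compare against, and I assess your sketch on its own.

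Your reformulation is correct and you have located the real obstacle, but your final paragraph explicitly declines to resolve it: you write that ``the hard part is the density bookkeeping that simultaneously (i)\dots(ii)\dots(iii)\dots; once this is set up, verifying the four conclusions is routine.'' Items (i)--(iii) \emph{are} the proposition. A finite-extension choice of the witnesses $n_e$, as you describe it, makes $\bar C$ a $\emptyset'$-recursive set, and the phrase ``an $A$-recursive procedure that is finer than any recursive one'' is not a definition of $S_A$; nothing in your outline tells $A$ which points to withhold from $D_A$ without consulting $\emptyset'$. (There is also a type mismatch: if $S_A$ is merely $A$-r.e., then $D_A=\N\setminus S_A$ is $A$-co-r.e., not $A$-r.e., so it cannot serve as the domain of a generic computation outputting only $1$'s; you need $S_A$ to be $A$-recursive.) The argument in \cite{I} avoids finite extension: in infinitely many intervals devoted to each $e$ one waits, recursively, for $W_e$ to enumerate a small grid of candidate points, and the single point removed from $C$ is the one indexed by a fixed pair of bits of $A$ and of $B$. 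Then $A$ alone can compute which row the removed point sits in and safely enumerate the complementary rows, and symmetrically for $B$ with columns; the removed point lies in $W_e$ by construction; and any recursive prediction of the removed point would compute a bit of $A$ or of $B$. Your outline gestures toward this idea (``$A$- and $B$-visible without being $\emptyset$-visible'') but supplies none of the mechanism.
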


In fact, an easy generalization of the argument from \cite{I} can be used to strengthen Proposition \ref{nominimalpair1} to work for any finite set of nonrecursive reals.

Proposition \ref{nominimalpair1} gives us an immediate corollary concerning the quasi-minimal generic degrees.

\begin{cor}
Let {\textbf b} be any non-quasi-minimal generic degree. Then there is a density-1 quasi-minimal generic degree {\textbf a} such that ${\textbf b}\geq_g{\textbf a}$.

\end{cor}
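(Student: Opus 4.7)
The plan is to combine Proposition \ref{nominimalpair1} with the existence of minimal pairs in the Turing degrees. Since $\mathbf{b}$ is non-quasi-minimal, it must be nontrivial and bound some embedded Turing degree $\mathcal{R}(\mathbf{d})$ with $\mathbf{d}\neq 0$; pick a nonrecursive real $A$ with $\mathbf{b}\geq_g\mathcal{R}(A)$. Now invoke the classical fact that the Turing degrees contain minimal pairs to choose a nonrecursive real $B$ such that $A$ and $B$ form a minimal pair in the Turing degrees.

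Next, I would apply Proposition \ref{nominimalpair1} to the pair $A,B$ to obtain a real $C$ which is density-1, not generically computable, and satisfies $\mathcal{R}(A)\geq_g C$ and $\mathcal{R}(B)\geq_g C$. Let $\mathbf{a}$ be the generic degree of $C$. Because $\mathbf{b}\geq_g\mathcal{R}(A)\geq_g C$, transitivity of $\geq_g$ (Lemma \ref{transitivity}) gives $\mathbf{b}\geq_g\mathbf{a}$, and $\mathbf{a}$ is density-1 and nonzero by construction.

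It remains to verify that $\mathbf{a}$ is quasi-minimal. Suppose $\mathbf{a}\geq_g\mathcal{R}(\mathbf{e})$ for some Turing degree $\mathbf{e}$. Then by transitivity $\mathcal{R}(A)\geq_g\mathcal{R}(\mathbf{e})$ and $\mathcal{R}(B)\geq_g\mathcal{R}(\mathbf{e})$. Since $X\mapsto\mathcal{R}(X)$ is an embedding of the Turing degrees into the generic degrees (Proposition \ref{R}), this forces $A\geq_T\mathbf{e}$ and $B\geq_T\mathbf{e}$. But $A$ and $B$ were chosen to form a Turing minimal pair, so $\mathbf{e}=0$. Thus $\mathbf{a}$ is a density-1 quasi-minimal generic degree with $\mathbf{b}\geq_g\mathbf{a}$, as desired.

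There is no real obstacle here beyond having Proposition \ref{nominimalpair1} and the embedding lemma in hand; the only subtlety is that one must use the \emph{generic} inequalities $\mathcal{R}(A)\geq_g\mathcal{R}(\mathbf{e})$ and $\mathcal{R}(B)\geq_g\mathcal{R}(\mathbf{e})$ together with the fact that $\mathcal{R}$ is an \emph{order} embedding (not merely an injection of degrees) to recover the Turing-theoretic inequalities $A\geq_T\mathbf{e}$ and $B\geq_T\mathbf{e}$.
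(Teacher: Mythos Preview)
Your proposal is correct and follows essentially the same argument as the paper: pick a nonrecursive Turing degree below $\mathbf{b}$, pair it with a Turing-minimal-pair partner, apply Proposition~\ref{nominimalpair1}, and use the order-embedding property of $\mathcal{R}$ to verify quasi-minimality. The only difference is naming (your $A,B,C$ are the paper's $C,D,A$).
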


This, in particular, shows that a counterexample to Question \ref{q3} would necessarily be a quasi-minimal generic degree.

\begin{proof}
Let {\textbf b} be a non-quasi-minimal generic degree. Let {\textbf c} be a nonzero Turing degree such that ${\textbf b}\geq_g\mathcal{R}({\textbf c})$. Let $C\in{\textbf c}$.

Then, by a result of classical recursion theory, $C$ is half a minimal pair, so there is a nonrecursive real $D$ such that $C$ and $D$ form a minimal pair in the Turing degrees. (For any $X$, if $C\geq_TX$ and $D\geq_TX$, then $X$ is recursive.)

By Proposition \ref{nominimalpair1}, we can choose a density-1 real $A$ such that $A$ is not generically computable, $\mathcal{R}(C)\geq_gA$, and $\mathcal{R}(D)\geq_gA$. Let {\textbf a} be the generic degree of $A$. Then, ${\textbf b}\geq {\textbf c}\geq {\textbf a}$, and also {\textbf a} is density-1. It remains to show that {\textbf a} is quasi-minimal.

If not, then fix $X$ nonrecursive such that $\mathcal{R}(X)\leq_gA$. but then, $\mathcal{R}(C)\geq_gA\geq\mathcal{R}(X)$, and also $\mathcal{R}(D)\geq_gA\geq\mathcal{R}(X)$. Since $\mathcal{R}$ induces an embedding of the Turing degrees into the generic degrees, we get that $C\geq_TX$ and $D\geq_TX$. This contradicts that $C$ and $D$ form a minimal pair in the Turing degrees, and so $X$ could not have existed, and so {\textbf a} is quasi-minimal.

\end{proof}

Thus, there are a fair number of quasi-minimal generic degrees, and, in fact, many of them are density-1. Next, we indicate how a resolution to Question \ref{q3} would allow us to resolve either Question \ref{q1} or Question \ref{q2}.

Showing that a positive resolution to Question \ref{q3} gives a negative resolution to Question \ref{q2} is the easy half, since it is a direct application of Proposition \ref{denseisdense}.

\begin{prop}\label{a1}
If for every nonzero generic degree {\textbf b} there exists a nonzero density-1 generic degree {\textbf a} such that ${\textbf a}\leq_g{\textbf b}$, then there are no minimal degrees in the generic degrees.
\end{prop}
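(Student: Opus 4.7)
The plan is to fix an arbitrary nonzero generic degree $\mathbf{b}$ and show that some $\mathbf{c}$ satisfies $\mathbf{0} <_g \mathbf{c} <_g \mathbf{b}$. By hypothesis, we may choose a nonzero density-1 generic degree $\mathbf{a}$ with $\mathbf{a} \leq_g \mathbf{b}$, and then split into two cases according to whether $\mathbf{a} <_g \mathbf{b}$ or $\mathbf{a} = \mathbf{b}$.

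In the first case, $\mathbf{a}$ itself already witnesses that $\mathbf{b}$ is not minimal, so we simply set $\mathbf{c} = \mathbf{a}$; we have $\mathbf{0} <_g \mathbf{a} <_g \mathbf{b}$ by the choice of $\mathbf{a}$. In the second case, $\mathbf{b}$ itself is density-1 (since it equals the density-1 degree $\mathbf{a}$). Now observe that the zero generic degree $\mathbf{0}$ is also a density-1 generic degree, since $\mathbb{N}$ itself is a density-1 real that is (generically) computable. So we are in a situation where $\mathbf{b} >_g \mathbf{0}$ and both $\mathbf{b}$ and $\mathbf{0}$ are density-1 generic degrees. Applying Proposition \ref{denseisdense} with $\mathbf{a}$ replaced by $\mathbf{0}$ and $\mathbf{b}$ as itself yields a density-1 generic degree $\mathbf{c}$ with $\mathbf{b} >_g \mathbf{c} >_g \mathbf{0}$, so $\mathbf{b}$ is again not minimal.

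Combining the two cases, no nonzero generic degree $\mathbf{b}$ is minimal, which is the desired conclusion. There is no real obstacle here: the entire argument is a two-line case split whose substance is already packaged into Proposition \ref{denseisdense}. The only mild point worth being explicit about is the verification that $\mathbf{0}$ is a density-1 generic degree, which is why density of the density-1 degrees \emph{above} $\mathbf{0}$ (rather than merely strictly between two positive density-1 degrees) is enough to preclude minimality.
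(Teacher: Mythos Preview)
Your proof is correct and takes essentially the same approach as the paper: both rely on the observation that $\mathbf{0}$ is a density-1 generic degree and then invoke Proposition~\ref{denseisdense}. The only difference is cosmetic---the paper skips your case split by applying Proposition~\ref{denseisdense} directly to the pair $\mathbf{a} >_g \mathbf{0}$ to obtain $\mathbf{c}$ with $\mathbf{a} >_g \mathbf{c} >_g \mathbf{0}$, and then notes that $\mathbf{b} \geq_g \mathbf{a} >_g \mathbf{c}$ forces $\mathbf{b} >_g \mathbf{c}$ regardless of whether $\mathbf{a} = \mathbf{b}$ or $\mathbf{a} <_g \mathbf{b}$.
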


\begin{proof}
Let {\textbf b} be a generic degree, and assume that there exists a nonzero density-1 generic degree {\textbf a} such that ${\textbf a}\leq_g{\textbf b}$.

Then, 0 is a density-1 generic degree (because $\mathbb{N}$ is generically computable, and density-1). So by Proposition \ref{denseisdense}, there exists a density-1 generic degree {\textbf c}, such that ${\textbf a}>_g{\textbf c}>_g0$. Thus, ${\textbf b}>_g{\textbf c}>_g0$, and so {\textbf b} is not a minimal generic degree.

\end{proof}

The other half is slightly more subtle, but the argument is basically a modification of the usual construction of a minimal pair in the Turing degrees, together with the realization that a counterexample to Question \ref{q3} would have exactly the property that we require in order to adapt the construction to our situation.

\begin{prop}\label{a2}
If there exists a nonzero generic degree {\textbf b} such that there is no nonzero density-1 generic degree {\textbf a} with ${\textbf a}\leq_g{\textbf b}$, then {\textbf b} is half of a minimal pair in the generic degrees.

\end{prop}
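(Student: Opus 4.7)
The plan is to adapt the classical Turing-degree minimal-pair construction to the generic degrees, invoking the hypothesis on $\textbf{b}$ to handle the critical case. Fix $B \in \textbf{b}$ and build $C$ recursively in $B$ by finite extensions $\sigma_0 \subsetneq \sigma_1 \subsetneq \cdots$, with $C = \bigcup_s \sigma_s$. Two families of requirements are met: for each partial recursive $\eta_k$, a diagonalization $P_k$ arranges $C \neq \eta_k$ on some $n \in \dom(\eta_k)$, ensuring $\textbf{c} \neq 0$; and for each pair of more-is-more functionals $(\varphi_e, \psi_i)$ (by Lemma \ref{moreismore} it is harmless to restrict to more-is-more), a requirement $R_{e,i}$ ensures that if both generically reduce a common real $D$ from $B$ and from $C$ respectively, then $D$ is generically computable.

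At stage $\langle R_{e,i}\rangle$, with current approximation $\sigma_s$, I search, using $B$ as an oracle, for a splitting: a finite extension $\tau \supseteq \sigma_s$, a finite partial oracle $\alpha$ consistent with $\tau$, a finite partial oracle $\beta$ valid for $B$, and an input $n$ with $\psi_i^{\alpha}(n)\downarrow \neq \varphi_e^{\beta}(n)\downarrow$. If found, I extend $\sigma_s$ to $\sigma_{s+1}$ which contains $\tau$ and sets each bit queried by $\alpha$ to match $\alpha$'s claim; then $\alpha$ extends to a genuine generic oracle $(C)$ for the final $C$, and the more-is-more property of $\psi_i$ yields $\psi_i^{(C)}(n) = \psi_i^{\alpha}(n)$, while $\varphi_e^{\beta}(n) = D(n)$ since $\beta$ is valid for $B$ and $\varphi_e$ reduces $D$. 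Thus $\psi_i^{(C)}$ disagrees with $D$ at $n$, so $\psi_i$ cannot generically reduce $D$ from $C$, and $R_{e,i}$ is vacuously satisfied.

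The main case is when no such splitting exists, and this is where the hypothesis on $\textbf{b}$ enters. Define a partial recursive $\rho$ by $\rho(n) = x$ iff some finite partial oracle $\alpha$ consistent with $\sigma_s$ has $\psi_i^{\alpha}(n) = x$; initial segments of the trivial generic oracle for the final $C$ witness that $\dom(\rho)$ is density-1. Let $N$ be the set of $n$ for which some valid partial oracle $\beta$ for $B$ satisfies $\varphi_e^{\beta}(n)\downarrow$; by more-is-more, $N$ is the domain of $\varphi_e$ applied to the generic oracle for $B$ that halts on all of $\N$, which is density-1. The no-splitting assumption then forces $\rho(n) = \varphi_e^{\beta}(n) = D(n)$ for every $n \in \dom(\rho) \cap N$. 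Moreover $B$ generically computes $N$ uniformly---the functional ``run $\varphi_e$ and output $1$ whenever it halts'' does the job---so the generic degree of $N$ is $\leq_g \textbf{b}$; since $N$ is density-1, the hypothesis forces $N$ to be generically computable. Composing a generic computation of $N$ with $\rho$---outputting $\rho(n)$ only when the generic computation of $N$ certifies $n \in N$---yields a partial recursive function with density-1 domain that agrees everywhere with $D$, i.e., a generic computation of $D$.

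The $P_k$ diagonalizations are routine finite extensions and can be interleaved with the $R_{e,i}$ stages without conflict, since each stage commits only finitely many bits of $C$. At the end $C$ is not generically computable and every common lower bound of $\textbf{b}$ and $\textbf{c}$ is $0$; since $\textbf{c} \neq 0$, this automatically forces $\textbf{c} \not\leq_g \textbf{b}$, so $(\textbf{b}, \textbf{c})$ is a minimal pair. The main obstacle is the no-splitting case: recognizing that the potential errors in the naive search-based computation $\rho$ are confined to the complement of the density-1 set $N$, and then using the hypothesis on $\textbf{b}$---which exactly rules out $N$ being a nontrivial density-1 degree below $\textbf{b}$---to filter those errors out.
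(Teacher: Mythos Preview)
Your proof is correct and follows essentially the same route as the paper: a finite-extension construction of $C$ with a splitting/no-splitting dichotomy at each stage, where in the no-splitting case the key observation is that the union $N$ of the halting sets $\dom(\varphi_e^{(B)})$ over generic oracles $(B)$ for $B$ is a density-$1$ set generically reducible to $B$, hence (by the hypothesis on $\textbf{b}$) generically computable, and one then filters the naive search-based computation through a density-$1$ r.e.\ subset of $N$. The only cosmetic difference is that the paper pairs $B$ with $\mathcal{R}(C)$ rather than with $C$ itself (so its $C$-side reduction runs $\varphi_j$ on string extensions $\tilde\tau$ as full oracles, while you run $\psi_i$ on finite partial oracles consistent with $\sigma_s$), but this does not affect the argument.
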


\begin{proof}

Let $B\in {\textbf b}$. We will build a real $C$ such that $\mathcal{R}(C)$ and $B$ form a minimal pair for generic reduction, or in other words, so that if $B\geq_gD$, and $C$ generically computes $D$, then $D$ is generically computable.

We build $C$ by finite approximation.

We have one stage for each $e$, and one stage for each $\langle i,j\rangle$.

At stage $e$, we ensure that $C$ is not computed by $\varphi_e$ in the usual manner. (We have a $\tau$ which is our current approximation to $C$. We ask whether there exists an $n>|\sigma|$ such that $\varphi_e(n)\downarrow$, and if there does, then we extend $\tau$ so that $\tau\neq \varphi_e(n)$. If there does not exist such an $n$, then in particular $\varphi_e$ is not total, so we do not need to to anything to ensure that $C$ is not computed by $\varphi_e$.)

At stage $\langle i,j\rangle$, we have an approximation $\tau$ for $C$, and we need to ensure that if $D$ generically reduces to $B$ via $\varphi_i$, and if $D$ is generically computable from $C$ via $\varphi_j$, then $D$ is generically computable.

(As a reminder, being generically computable from $C$ is equivalent to being generically reducible to $\mathcal{R}(C)$. Also, being generically equivalent to 0 equivalent to being generically computable.)

The first thing that we ask is whether there is any extension $\tilde\tau$ of $\tau$, and any generic oracle $(B)$ for $B$ such that for some $n$, $\varphi_i^{(B)}(n)\neq\varphi_j^{\tilde\tau}(n)$. If there is, then we extend $\tau$ to $\tilde\tau$, thereby ensuring that $\varphi_j^C$ cannot be a generic computation of any real that generically reduces to $B$ via $\varphi_i$. (This is because neither computation is allowed to make any mistakes, and so in particular, if $\varphi_i^{(B)}$, and $\varphi_j^{\tilde\tau}$ are trying to be generic computations of the same real, then they are not allowed to disagree with each other anywhere.)

If there is no such $\tilde\tau$, then we ask whether there is any $D$, such that $D$ generically reduces to $B$ via $\varphi_i$. (In other words, we ask whether it is true that for every generic oracle $(B)$, for $A$, $\text{dom}\big(\varphi_i^{(B)}\big)$ is density-1, and whether it is true that the $\varphi_i^{(B)}$ all agree wherever they halt. If either of these is false, then there cannot be any $D$ that generically reduces to $B$ via $\varphi_i$.) If there is no such $D$, then, again, we do not need to do anything at stage $\langle i,j\rangle$.

If there is no such $\tilde\tau$, and if there does exist such a $D$, then we may generically compute any $D$ that generically reduces to $B$ via $\varphi_i$, and that also is generically computable from $C$ via $\varphi_j$ by the following algorithm. The basic idea of the algorithm will be that it halts on a subset of the halting set of $\varphi_i$, and gives only the outputs given by $\varphi_j$ applied to extensions of $\tau$.

The key thing to notice here is that the union, over all generic oracles $(B)$, for $B$, of the domains of $\varphi_i^{(B)}$ is a density-1 set that generically reduces to $B$. This is witnessed by the algorithm, $\psi$, that halts wherever $\varphi_i$ halts, and that outputs a 1 wherever it halts. So $\psi^X(n)\downarrow\Longleftrightarrow \varphi^X(n)\downarrow\Longleftrightarrow \psi^X(n)=1$. (Let $A$ be the union, over all generic oracles, $(B)$, for $B$, of the domains of the $\varphi_i^{(B)}$. Then, for each $(B)$, $\text{dom}\big(\varphi_i^{(B)}\big)$ is a density-1 set. Also, it is clearly a subset of $A$, and so $\psi^{(B)}$ is a generic computation of $A$.)

Thus, by the hypothesis on $B$, $A$ is generically computable (since it is a density-1 set, and $B\geq_gA$), and so $A$ contains a density-1 r.e. subset, $W$. (By Lemma \ref{output1}, a density-1 set is generically computable if and only if it contains a density-1 r.e. subset, since a generic computation of $A$ that only outputs 1's actually is the same thing as an enumeration of a density-1 subset of $A$.)

Now, we define $\psi$ so that $\psi(n)\downarrow$ if and only if $n\in W$ and there exists a $\tilde\tau$ extending $\tau$ such that $\varphi_j^{\tilde\tau}(n)\downarrow$. In this case, we let $\psi(n)=\varphi_j^{\tilde\tau}(n)$ for the first such $\tilde\tau$ that we find.

This is a generic computation of any $D$ that we are concerned with, because the halting set is contained in the union of the halting sets of the $\varphi_i^{(B)}$, and the fact that we couldn't diagonalize means that the output that we found from our $\tilde\tau$ using $\varphi_j$ must be the same as that given by $\varphi_i^{(B)}$. This value must be correct, since we are assuming that $D$ reduces to $B$ via $\varphi_i$.

\end{proof}

Combining Propositions \ref{a1} and \ref{a2} gives us a free corollary concerning the generic degrees.

\begin{cor}\label{minismin}

If there exist minimal generic degrees, then there exist minimal pairs of generic degrees. In fact, any minimal generic degree is half of a minimal pair in the generic degrees.

\end{cor}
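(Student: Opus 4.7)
The plan is to derive the corollary as a direct combination of Propositions \ref{a1} and \ref{a2}, together with the density result Proposition \ref{denseisdense}. The first (weaker) statement is immediate from the second by choosing any minimal generic degree, so the real content is to show that every minimal generic degree $\textbf{b}$ satisfies the hypothesis of Proposition \ref{a2}, namely that no nonzero density-1 generic degree sits below $\textbf{b}$.

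First, I would let $\textbf{b}$ be an arbitrary minimal generic degree and suppose for contradiction that there is some nonzero density-1 generic degree $\textbf{a}$ with $\textbf{a} \leq_g \textbf{b}$. Minimality of $\textbf{b}$ forces $\textbf{a} = \textbf{b}$, so in this case $\textbf{b}$ itself must be density-1. Next, I would invoke Proposition \ref{denseisdense}: since $\mathbb{N}$ is trivially generically computable and density-1, the zero generic degree is density-1, and the proposition applied to the strict inequality $\textbf{b} >_g 0$ of density-1 degrees yields a density-1 generic degree $\textbf{c}$ with $\textbf{b} >_g \textbf{c} >_g 0$. This contradicts the minimality of $\textbf{b}$, so no nonzero density-1 generic degree can lie below $\textbf{b}$.

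Having established this, I would finish by quoting Proposition \ref{a2} applied to $\textbf{b}$ to conclude that $\textbf{b}$ is half of a minimal pair in the generic degrees. This gives the \emph{In fact} clause, and the first sentence of the corollary follows by selecting any one such minimal degree. There is no real obstacle here; the proof is a short logical combination of results already in hand. The only subtlety worth flagging is the reminder that $0$ is itself a density-1 generic degree, which is what allows Proposition \ref{denseisdense} to be invoked on the pair $\textbf{b} >_g 0$ when $\textbf{b}$ is density-1.
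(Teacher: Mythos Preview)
Your proposal is correct and is essentially the same argument the paper intends. The paper's one-line proof (``the answer to Question~\ref{q3} is either yes or no'') is really shorthand for exactly what you wrote: the proof of Proposition~\ref{a1} shows, degree by degree, that any $\textbf{b}$ with a nonzero density-1 degree below it is not minimal, so a minimal $\textbf{b}$ satisfies the hypothesis of Proposition~\ref{a2}. You have simply unpacked this, invoking Proposition~\ref{denseisdense} directly rather than citing Proposition~\ref{a1}, and in doing so you make the ``In fact'' clause explicit where the paper leaves it implicit.
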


This is simply because the answer to Question \ref{q3} must either be ``yes," or ``no."

This seems trivial, since the only way for Corollary \ref{minismin} to be false would be if there were a single minimal generic degree that was below all the other nontrivial generic degrees. In the Turing degrees, it is easy to show that any nontrivial Turing degree has another Turing degree that is incomparable to it, but we do not have a proof that this is true in the generic degrees. It seems highly unlikely for the generic degrees to have an ``hourglass" shape, with a non-minimal generic degree that is comparable to all others, but we do not currently have a method of ruling this out.

\begin{question}

Does there exist a generic degree {\textbf a} such that for every generic degree {\textbf b}, either ${\textbf a}\geq_g{\textbf b}$, or ${\textbf b}\geq_g{\textbf a}$?

\end{question}

Note that such an {\textbf a} would necessarily be quasi-minimal.

\begin{obs}

If there is a generic degree that is comparable to all other generic degrees, then that generic degree is quasi-minimal.

\end{obs}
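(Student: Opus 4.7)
The plan is to argue by contradiction: assuming $\mathbf{a}$ is a nontrivial generic degree comparable to every generic degree but not quasi-minimal, I will combine the universal comparability hypothesis with the order-reflecting property of the embedding $\mathcal{R}$ to force $\mathbf{a}$ to lie above uncountably many distinct embedded Turing degrees, and then pigeonhole on the countably many Turing functionals to contradict the definition of generic reduction.

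By the failure of quasi-minimality, $\mathbf{a} \geq_g \mathcal{R}(\mathbf{x})$ for some nonzero Turing degree $\mathbf{x}$. I would first show that $\mathbf{a} \geq_g \mathcal{R}(\mathbf{y})$ for \emph{every} Turing degree $\mathbf{y}$ that is Turing-incomparable to $\mathbf{x}$. Indeed, by the comparability hypothesis either $\mathbf{a} \geq_g \mathcal{R}(\mathbf{y})$ or $\mathcal{R}(\mathbf{y}) \geq_g \mathbf{a}$; in the latter case, transitivity (Lemma \ref{transitivity}) gives $\mathcal{R}(\mathbf{y}) \geq_g \mathcal{R}(\mathbf{x})$, and since $\mathcal{R}$ is an order-reflecting embedding (Proposition \ref{R}), this forces $\mathbf{y} \geq_T \mathbf{x}$, contradicting the assumed incomparability. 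Hence the first alternative must always hold.

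Since $\mathbf{x}$ is nonzero, there are $2^{\aleph_0}$-many Turing degrees incomparable to $\mathbf{x}$ (the set of reals Turing-comparable to a fixed noncomputable representative of $\mathbf{x}$ has Lebesgue measure zero, so a conull collection of reals distributes over uncountably many incomparable Turing degrees). Fix a representative $A \in \mathbf{a}$ and, for each such $\mathbf{y}$, a representative $Y_{\mathbf{y}} \in \mathbf{y}$ together with a Turing functional $\varphi_{\mathbf{y}}$ witnessing $A \geq_g \mathcal{R}(Y_{\mathbf{y}})$. Since only countably many Turing functionals exist, pigeonhole produces $\mathbf{y} \neq \mathbf{y}'$ with $\varphi_{\mathbf{y}} = \varphi_{\mathbf{y}'} =: \varphi$. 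Now fix any generic oracle $(A)$ for $A$, for instance the total oracle that $A$ computes by halting everywhere. Then $\varphi^{(A)}$ is simultaneously a generic computation of $\mathcal{R}(Y_{\mathbf{y}})$ and of $\mathcal{R}(Y_{\mathbf{y}'})$, so its halting set is a density-$1$ subset of the agreement set $\{n : \mathcal{R}(Y_{\mathbf{y}})(n) = \mathcal{R}(Y_{\mathbf{y}'})(n)\}$. But $Y_{\mathbf{y}}$ and $Y_{\mathbf{y}'}$ sit in distinct Turing degrees and so differ as sets at some natural number $m$; then $\mathcal{R}(Y_{\mathbf{y}})$ and $\mathcal{R}(Y_{\mathbf{y}'})$ disagree at every element of the set $\{2^m k : k \text{ odd}\}$, which has density $2^{-(m+1)} > 0$. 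Therefore the halting set has upper density at most $1 - 2^{-(m+1)} < 1$, contradicting the density-$1$ condition.

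The main obstacle is the upgrade from a single lower bound $\mathbf{a} \geq_g \mathcal{R}(\mathbf{x})$ to uncountably many incomparable lower bounds; this is where the order-reflecting half of the embedding $\mathcal{R}$ does essential work, and it is also what turns the hypothesis ``comparable to everything'' into a genuinely uncountable combinatorial commitment. Once uncountably many such lower bounds are in hand, the conclusion follows mechanically from the uniformity built into the definition of generic reduction and the positive-density spread of any single bit of information under $\mathcal{R}$.
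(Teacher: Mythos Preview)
Your proof is correct, but it takes a more circuitous route than the paper's. Your Step~1 is exactly the paper's key move: use the order-reflecting property of $\mathcal{R}$ together with universal comparability to rule out $\mathcal{R}(\mathbf{y}) \geq_g \mathbf{a}$ for any $\mathbf{y}$ incomparable to $\mathbf{x}$. The paper, however, finishes in one stroke from there: it first observes that $A \geq_g \mathcal{R}(X)$ implies $A \geq_T X$ (since $A$ computes a total generic oracle for itself, which then computes $X$ via $\mathcal{R}(X)$), and then simply picks a single $C$ Turing-incomparable to both $A$ and a fixed representative of $\mathbf{x}$; this $C$ witnesses that $\mathcal{R}(C)$ is $\leq_g$-incomparable to $\mathbf{a}$, contradicting the hypothesis directly. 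Your argument instead escalates to uncountably many lower bounds and pigeonholes on Turing functionals, which works but is really reproving, in disguise, the fact that a single real cannot sit $\geq_g$ above $\mathcal{R}(Y)$ for uncountably many Turing-inequivalent $Y$---a fact that follows immediately from the paper's observation $A \geq_g \mathcal{R}(Y) \Rightarrow A \geq_T Y$. So your approach buys independence from that small observation at the price of a cardinality argument; the paper's approach is shorter and exhibits an explicit incomparable degree.
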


\begin{proof}

Let {\textbf a} be a generic degree that is comparable to all other generic degrees. Let $A\in{\textbf a}$, and let $A\geq_g \mathcal{R}(B)$. We must show that $B$ is recursive.

Note that if $A\geq_g \mathcal{R}(B)$, then $A\geq_T B$. (It is easy for $A$ to compute a generic oracle for itself, and any generic oracle for $A$ can generically compute $\mathcal{R}(B)$, and so can compute $B$.) If $B$ is nonrecursive, then we can choose $C$ to be Turing incomparable to both $A$, and $B$. Then $A\ngeq_g \mathcal{R}(C)$, since $A\ngeq_TC$, and $A\nleq_g \mathcal{R}(C)$, since $C\ngeq_TB$. Therefore, we cannot have $\mathcal{R}(C)\geq_gA\geq_g\mathcal{R}(B)$.

Thus, if $A\geq_g \mathcal{R}(B)$, then $B$ must be recursive, and so, {\textbf a} must be quasi-minimal.

\end{proof}

\section{${\bf\Pi}^1_1$-completeness}

We finish by proving that generic reduction is ${\bf\Pi}^1_1$-complete. 

The definition we use for generic reduction is intrinsically ${\bf\Pi}^1_1$, since it involves a universal quantifier over all generic oracles. This makes it very difficult to run the constructions that we want to, since the techniques of recursion theory are often poorly suited to dealing with quantifiers over uncountable sets. Indeed, this is one of the primary reasons that most of our work concerns the generic degrees of density-1 sets --- it is somewhat easier to work with density-1 subsets of a given set than with partial oracles with density-1 domain.

By showing that $\geq_g$ is $\mathbf{\Pi_1^1}$-complete, we show that there is no way for this quantifier over generic oracles to be replaced by quantifiers over naturals, and that generic reduction is, in some sense, as complicated as it seems.

We show that $\geq_g$ is $\mathbf{\Pi_1^1}$-complete by showing that from any tree $T\subseteq\omega^\omega$, we can use the jump of that tree to uniformly find $A$ and $B$ such that $A\geq_gB$ if and only if $T$ is well-founded. This gives a Borel reduction of well-foundedness to generic reducibility, proving that generic reducibility is $\mathbf{\Pi_1^1}$-complete.

\begin{thm}\label{pi11}

There exists an algorithm which, given a tree $T\subseteq\omega^\omega$, uses $T^\prime$ as an oracle to compute a pair of reals $A$ and $B$ such that $A\geq_gB$ if and only if $T$ is well-founded. Thus, $\geq_g$ is $\mathbf{\Pi_1^1}$-complete.

\end{thm}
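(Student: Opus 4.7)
The plan is to prove $\mathbf{\Pi_1^1}$-completeness in two pieces: verify that $\geq_g$ is $\mathbf{\Pi_1^1}$, then exhibit a Borel reduction from the $\mathbf{\Pi_1^1}$-complete problem of well-foundedness of trees $T\subseteq\omega^{<\omega}$ to $\geq_g$, computable from $T'$.

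For the upper bound, I would unfold the definition: $A\geq_g B$ is equivalent to $\exists e\,\forall X\,\bigl[X\text{ is a generic oracle for }B\longrightarrow \varphi_e^X\text{ is a generic computation of }A\bigr]$. The bracketed matrix is arithmetic in the real parameters $X, A, B$, and the outer $\exists e$ is a number quantifier that can be absorbed into it, leaving a single universal real quantifier over an arithmetic matrix, which is $\mathbf{\Pi_1^1}$.

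For hardness, given $T$ I would use $T'$ (which decides the $\Sigma^0_1(T)$ predicate ``$\sigma$ has an extension in $T$,'' so distinguishes leaves from non-leaves of $T$) to construct a density-1 real $A$ and a real $B$ as follows. Partition $\mathbb{N}$ into blocks $I_\sigma$, one per $\sigma\in\omega^{<\omega}$, with sizes tuned so that $A$ comes out density-1 while, for every $f\in\omega^\omega$, the union $\bigcup_n I_{f\restriction n}$ still has density $0$, so that a putative oracle whose failure set sits inside a single branch is still generic. Inside each $I_\sigma$ with $\sigma\in T$ plant a small gap; use $T'$ to arrange that actual bits of $B$ are coded only inside the gaps corresponding to leaves of $T$, while non-leaf gaps carry only routing bookkeeping. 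The design is such that for every infinite branch $f\in [T]$ the partial oracle $(A)_f$ that enumerates $A$ outside $\bigcup_n I_{f\restriction n}$ is a legitimate generic oracle for $A$ from which no Turing functional can recover $B$ (no leaf along $f$ is ever approached); conversely, any generic oracle that is not asymptotically concentrated along some branch of $T$ must, by density, enumerate cofinitely many blocks and in particular infinitely many leaf blocks, thereby exposing $B$.

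If $T$ is well-founded, then $[T]=\emptyset$ and a compactness (K\"onig-style) argument on the well-founded tree of finite approximations to a purported generic oracle produces a single Turing functional that extracts $B$ from every generic oracle for $A$; if $T$ is ill-founded, any branch $f\in [T]$ supplies the counterexample oracle $(A)_f$, so $A\not\geq_g B$. The main obstacle --- and the place where $T'$ is genuinely needed rather than just $T$ --- is coordinating the coding so that leaves contribute to computing $B$ while non-leaves do not, while all three density constraints (on $A$ itself, on every branch oracle $(A)_f$, and on every non-branch oracle's forced ``escape rate'' into leaf blocks) hold simultaneously under a single uniform computable recipe. Pushing the compactness argument through to a \emph{single} uniform functional, rather than one depending on the oracle, is the delicate closing step.
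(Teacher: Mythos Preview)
Your upper-bound paragraph is fine. The hardness direction, however, has two genuine gaps.

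First, your leaf-based coding is inverted. You propose to code the bits of $B$ only in blocks $I_\sigma$ for $\sigma$ a \emph{leaf} of $T$, and then, given an infinite branch $f\in[T]$, to withhold the blocks $I_{f\restriction n}$. But every $f\restriction n$ along an infinite branch is a \emph{non}-leaf, so the blocks you are hiding carry none of the $B$-information by your own design; the leaf blocks, which do carry it, remain fully visible. So the oracle $(A)_f$ you describe does not obstruct the intended reduction at all. More fundamentally, there is no reason the mere shape of $T$ (leaves versus non-leaves) should decide whether $A\geq_g B$: you need the actual \emph{values} of the coded bits to do work, and those values have to be chosen adversarially.

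Second, and this is the central missing idea: your construction is static --- a fixed recipe from $T$ and $T'$ --- but to conclude $A\not\geq_g B$ in the ill-founded case you must defeat \emph{every} Turing functional $\varphi_e$, not just the intended one. Nothing in your description prevents some $\varphi_e$ from simply guessing $B$ correctly without decoding anything; indeed, if $B$ were determined by a uniform recipe, some functional could compute it outright. The paper handles this by making the construction dynamic: it builds $A$ and $B$ by finite extension, and at stage $s$ it uses $T'$ to ask whether the current condition can be extended so that $\varphi_s^A$ outputs a wrong bit of $B$. If yes, it does so; if no, it freezes a level of the tree in a way that later lets an infinite path be used to build a generic oracle $(A)$ on which $\varphi_s^{(A)}$ fails to have density-1 domain. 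The ``deduction procedures'' in the paper --- gadgets that let $b_\sigma$ be recovered from the bits $b_\tau$ of its predecessors, calibrated so that suppressing them costs a specific density drop --- are what make the well-founded direction go through while still leaving room, along an infinite path, to hide an infinite sequence of bits from a bounded density budget. Your sketch has no analogue of either the diagonalization or the propagation mechanism, and both are essential.
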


The proof will consist of three parts.

In the first part, we describe the intended reduction from $A$ to $B$. The reduction will have the property that an infinite path through $T$ corresponds to a method of creating a generic oracle for $A$ that does not generically compute $B$ via the intended reduction. Every node of the path will be able to be translated into another drop in the density of the domain of the computation without a corresponding drop in the density of the domain of the oracle.

In the second part, we build $A$ and $B$ to ensure that no reduction other than the intended reduction will work. During this second part, we do not need to work effectively, but rather we have access to $T^\prime$.

Finally, we verify that our construction works. If $T$ is well-founded, this will be clear, because the intended reduction will function as it is designed to. If $T$ is ill-founded, we will need to show that for any potential generic reduction $\varphi$, there is a generic oracle for $A$ that either makes $\varphi$ give a false answer somewhere, or makes dom$(\varphi)$ not be density-1. During this third part, we are not forced to work effectively in anything, and indeed, the generic oracles that we build would be quite difficult to compute.

The proof is somewhat bogged down in notation, but during the proof, we will attempt to explain the notation as it appears.

\begin{proof}
\textbf {Part 1}
\vspace{5pt}

Let $T\subseteq\omega^\omega$.

For each $\sigma\in T$, there will be a single bit $b_\sigma\in\lbrace0,1\rbrace$.

We will code $b_\sigma$ into $A$ in a manner so that any partial oracle for $A$ which cannot recover $b_\sigma$ must have its density drop below $1-2^{-|\sigma|-2}$ at least once as a result.

We will code $b_\sigma$ into $B$ in a manner so that if a computation cannot compute $b_\sigma$, then the domain of that computation will have its density drop below $\frac12$ as a result.

The intention of this is that if there is an infinite path $Q$ through $T$, we will be able to produce a generic oracle for $A$ that omits $b_\sigma$ for every $\sigma$ on $P$, and that therefore cannot generically compute $B$, because it is missing infinitely many pieces of information, and each missing piece of information forces there to be another instance of the computation's domain's density dropping below $\frac12$.

Unfortunately, this creates a problem. There could theoretically be a generic oracle for $A$ that chooses a collection of $\sigma$'s of increasing lengths from \emph{different} paths of $T$, and omits each of the corresponding $b_\sigma$'s. Potentially this might be unable to generically compute $B$ even if the tree is well-founded.

For this reason, we need to also introduce a method for propagating information down the tree: if $\sigma\prec\tau$, and $b_\sigma$ is known, then it should be easy to deduce what $b_\tau$ is. That way, removing the knowledge of an entire branch will still have the original desired effect, but removing bits of information from different branches will be much more difficult than previously.

However, if we want to be able to remove information along a path, we need to make sure that our procedure for propagating information downward along $T$ does not also cause information to propagate upward along $T$. Else, if $Q$ is a path through $T$, $\sigma\prec\tau$, $\sigma\prec Q$, and $\tau\nprec Q$, then $b_\sigma$ could be deduced from $b_\tau$, so we would not be able to selectively remove only the information along $Q$ from $A$.

For each $\sigma\in T$, for each $m<|\sigma|$, we create a procedure to deduce $b_\sigma$ from $\langle b_{\sigma_0},...,b_{\sigma_m}\rangle$, where $\sigma_{-1}=\sigma$, and $\sigma_{i+1}$ is the immediate predecessor of $\sigma_i$. This procedure is coded in a way so that if a partial oracle for $A$ does not know the procedure, then its density must drop below $1-2^{|\sigma_m|+2}$ as a result. The procedure is also coded in a way so that knowing $b_\sigma$ and knowing the procedure does not necessarily allow us to deduce any of the $b_{\sigma_i}$.

The actual coding is as follows:
\vspace{5pt}

Consider the sets $P_i=\lbrace n\in\N\, |\, 2^i\leq n<2^{i+1}\rbrace$.

In $B$, for each $\sigma\in\omega^\omega$, uniformly choose an $i$, and code $b_\sigma$ into $P_i$. (If $n\in P_i$, then $n\in B\leftrightarrow b_\sigma=1$.) If $\sigma\notin T$, then $b_\sigma=0$.

This will ensure that a generic computation of $B$ must compute all but finitely many of the $b_\sigma$, since omitting a finite number of $P_i$ is only a finite omission, but no density-1 set can avoid infinitely many $P_i$.

We define $A$ to be equal to $\widetilde A\oplus\mathcal{R}(T)$, where $\widetilde A$ is built as follows.

We will use some of the $P_i$ to code the values of the $b_\sigma$ in $\widetilde A$, and use the rest to code deduction procedures in $\widetilde A$.

For each $\sigma\in\omega^\omega$, uniformly choose an (even) $i$, and code $b_\sigma$ into the last $\frac1{2^{|\sigma|}}$ of $P_i$. (If $n$ is one of the last $2^{i-|\sigma|}$ many elements of $P_i$, then $n\in\widetilde A\leftrightarrow b_\sigma=1$. If $n$ is a smaller element of $P_i$ then $n\notin\widetilde A$.)

Uniformly choose a $P_i$ (with $i$ odd) for each sequence $\langle \sigma, m, \tau, j, k\rangle$ such that $\sigma\in\omega^\omega,m<|\sigma|, \tau\in2^m, j\in\lbrace0,1\rbrace, k\in\mathbb{N}$. Call it $P_{\sigma,m,\tau,j,k}$. Then, to deduce $b_\sigma$ from the sequence $\langle b_{\sigma_0},...,b_{\sigma_m}\rangle$, we use the following formula.
$$b_\sigma=j\Longleftrightarrow\exists n\exists k\  n\in \widetilde A\cap P_{\sigma,m,\langle b_{\sigma_0},...,b_{\sigma_m}\rangle,j,k}$$
When we actually build $\widetilde A$, we will ensure that for exactly one value of $k$, we put the last $\frac1{2^{|\sigma|-m-1}}\left|P_{\sigma,m,\langle b_{\sigma_0},...,b_{\sigma_m}\rangle,j,k}\right|$ many elements of $P_{\sigma,m,\langle b_{\sigma_0},...,b_{\sigma_m}\rangle,j,k}$ into $\widetilde A$.

For a fixed value of $\langle \sigma, m, \tau\rangle$, the set of all $P_{\sigma,m,\tau,j,k}$ is known as the \emph{deduction procedure} coding $b_\sigma$ from its $m+1$ predecessors. The deduction procedure \emph{operates under true assumptions} if $\tau = \langle b_{\sigma_0},...,b_{\sigma_m}\rangle$, and it \emph{operates under false assumptions} if $\tau \neq \langle b_{\sigma_0},...,b_{\sigma_m}\rangle$.

The idea here is that knowing $\langle b_{\sigma_0},...,b_{\sigma_m}\rangle$ will direct you to the correct place to look for the value of $b_\sigma$. Once you know where to look, you simply search until you find an answer. If you try to search for the value of $b_\sigma$ using incorrect values for $\langle b_{\sigma_0},...,b_{\sigma_m}\rangle$, then you might get the correct answer, you might get the incorrect answer, and you might get no answer. Because of this, knowing $b_\sigma$ gives little to no information about $\langle b_{\sigma_0},...,b_{\sigma_m}\rangle$. However, if we wish to remove a deduction procedure from an oracle, we only need to remove the place where it actually gives an answer, i.e., the last $\frac{2^i}{2^{|\sigma|-m-1}}$ many elements of $P_{i}$ for some $i$. The size is calibrated so that removing a deduction procedure whose shortest element is $\tau$ is just as difficult as removing the knowledge of what $b_\tau$ is.

Given that $A$ and $B$ are each built in the manner just described, a generic oracle for $A$ is able to generically compute $B$ by the following algorithm.

\vspace{5pt}

Let $(A)$ be a generic oracle for $A$.

To determine whether or not $n\in B$, we first determine which $P_i$ $n$ is in. Then we determine which $b_\sigma$ is coded into that $P_i$. Then, since $A\geq_g\mathcal{R}(T)$, we can use $(A)$ to determine whether or not $\sigma\in T$. If no, then $n\notin B$. If yes, then we attempt to determine the value of $b_\sigma$ as follows.

We define the sentence ``$(A)$ can determine the value of $b_\sigma$." by induction on $|\sigma|$.

Let $P_i$ be the set assigned to code $b_\sigma$ in to $\widetilde A$. Then, if $(A)$ gives an output on one of the last $2^{i-|\sigma|}$ many elements of $P_i$, then $(A)$ can determine the value of $b_\sigma$, and that value is the value of the output that we found.

The other way that $(A)$ can determine the value of $b_\sigma$ is with our deduction procedures. If there is some $m<|\sigma|$ such that $(A)$ can determine the values of $b_\tau$ for $\tau$ equal to one of the $m+1$ immediate predecessors of $\sigma$, and if $(A)$ also includes the value of $A$ in the location where the corresponding deduction procedure has a $1$, then the deduction procedure allows $(A)$ to determine the value of $b_\sigma$ just as when we described the deduction procedures.

\vspace{5pt}

If $T$ is well-founded, then for any generic oracle $(A)$, for $A$, there will only be finitely many $\sigma$ such that $(A)$ cannot determine the value of $b_\sigma$. The proof is as follows.

Let $(A)$ be a generic oracle for $A$. Assume there are infinitely many $\sigma$ such that $(A)$ cannot determine the value of $b_\sigma$. Let $\widetilde T\subseteq\omega^\omega$ be the smallest subtree of $T$ containing all of the $\sigma$ such that $(A)$ cannot determine the value of $b_\sigma$. Then, $\widetilde T$ is well-founded, because it is contained in $T$, which is well-founded. Also, it is infinite, by assumption. Thus, it must have at least one node where it branches infinitely. Call the first such node $\sigma_0$.

From each of those countably many branches, choose a minimal node $\sigma$ such that the generic oracle cannot determine the value of $b_\sigma$. (For $i>0$, let $\sigma_i$ be an extension of the $i$th branch of $\widetilde T$ such that $(A)$ cannot determine the value of $b_{\sigma_i}$, but for any $\tau$, if $\sigma_0\prec\tau\prec\sigma_i$, then $(A)$ can determine the value of $b_\tau$.)

Then, we claim that the domain of $(A)$ must have its density drop below $1-2^{-|\sigma_0|-3}$ infinitely often.

The reason for this is that, for any given $i$, if $\sigma_i$ is an immediate successor to $\sigma_0$, then the generic oracle must have its density drop below $1-2^{-|\sigma_0|-3}$ to not know the value of $b_{\sigma_i}$. (This is by the manner in which the $b_{\sigma_i}$ are directly coded into $A$.) Otherwise, by assumption on $\sigma_i$, we know that the generic oracle \emph{can} determine the values of the $b_\tau$ for all of the $\tau$ that satisfy $\sigma_0\prec\tau\prec\sigma_i$. Thus, since the first of those $\tau$ has length $|\sigma_0|+1$, the deduction procedure that allows us to determine $b_{\sigma_i}$ from those $b_\tau$ is coded in a way so that if the generic oracle cannot recover that deduction procedure, then its density must drop below $1-2^{-|\sigma_0|-3}$.

Each of these things is coded in a different place, so the domain of the generic oracle has its density drop below $1-2^{-|\sigma_0|-3}$ infinitely often. Therefore, the domain is not density-1, so $(A)$ is not a generic oracle, providing a contradiction.

Thus, if $T$ is well-founded, then any generic oracle can (uniformly) recover all but finitely many of the $b_\sigma$, and therefore all but finitely many of the bits of $B$. Thus, $A\geq_gB$.

\vspace{5pt}

\textbf{Part 2}

\vspace{5pt}

In this part, we construct $\widetilde{A}$ (and therefore $A$, and $B$, where these are defined according to the construction outlined in Part 1). While we do this, we ensure that if $T$ is ill-founded, then $A\ngeq_gB$.

If $T$ is ill-founded, then the intended reduction will not work as a generic reduction. Thus, the main purpose of this section will be ensuring that any reduction that ``cheats" infinitely often occasionally makes mistakes, and therefore cannot be used to generically reduce $B$ to $A$, since a generic reduction is never allowed to give incorrect outputs. (Here, ``cheating" simply means guessing at the values of $b_\sigma$ without having solid evidence as to why those guesses should be correct.)

For those who like to think about constructions in terms of forcing, we will be building a generic with respect to the poset implicitly defined in Part 1, except with the additional caveats that on the ``incorrect" deduction procedures, we are only allowed to encode them giving one output, giving the other output, or giving no output. Furthermore, our conditions are allowed to include restrictions of the form in the following paragraph.

One of our key techniques will be to fix a finite set of numbers, and demand that for any $n$ in that set, any deduction procedure that operates under false assumptions about any $b_\sigma$ with $|\sigma|=n$ does not produce any answers. (More formally, for any $n_0$ in that set, and any $\sigma_0, \sigma_1$, with $\sigma_0\prec\sigma_1$, and $|\sigma_0|=n_0$, $|\sigma_1|=n_1>n_0$, for any $m\geq n_1-n_0-1$, for any $\tau$ such that $\tau(n_1-n_0-1)\neq b_{\sigma_0}$, for any j, and k, $P_{\sigma_1,m,\tau,j,k}\cap\widetilde A$ is empty.)

\vspace{5pt}

The actual construction is as follows.

At the beginning of stage $s$, there is some number $f(s)$ such that we have determined the values of $b_\sigma$ for every $\sigma$ such that $|\sigma|<f(s)$, and for no other $\sigma$. For every $\sigma$ with $|\sigma|<f(s)$, and every deduction procedure for computing $b_\sigma$, we have determined the values of $\widetilde A$ on the entire deduction procedure. (That is, if $|\sigma|<f(s)$, then we have determined whether or not $n\in\widetilde A$ for every $n$ in any $P_{\sigma,m,\tau,j,k}$.) We also have some finite set of numbers $n$ such that we maintain a restriction saying that for any $n$ in that set, any deduction procedure that operates under false assumptions about any $b_\sigma$ with $|\sigma|=n$ does not produce any answers. We have not determined the values of $\widetilde A$ on any other deduction procedures.

At this point, we have one single functional, $\varphi_s$, that we need to diagonalize against. This means that we either must ensure that there will be some generic oracle $(A)$ for $A$ such that $\varphi_s^{(A)}$ does not have density-1 domain, or we must ensure that there is some generic oracle $(A)$ for $A$ such that $\varphi_s^{(A)}$ incorrectly computes $B$ at some number.

The first question that we ask is whether there any way of extending our definition of $\widetilde A$ to make $\varphi_s^{A}$ produce an incorrect computation for $B$. (We remind the reader that $T$ is fixed, and $B$ depends entirely on $A$, so determining the value of $\widetilde A$ also determines the values of $A$ and $B$, and so determines whether or not $\varphi_s^{A}$ produces any incorrect computations for $B$. Also, we only need to diagonalize against more-is-more reductions, so we may assume that having more information about the oracle always produces more outputs, and never produces different outputs, and thus, if any generic oracle for $A$ produces incorrect results, then the full oracle for $A$ produces incorrect results.)

If $\varphi_s^{A}$ can be made to produce any incorrect computation for $B$, then we make that extension, and this guarantees that when we are done constructing $\widetilde A$, (and therefore $A$ and $B$,) it will not be true that $A\geq_gB$. After that, we extend $\widetilde A$ arbitrarily in order to meet the hypotheses on what the construction should look like at the beginning of a stage. (This involves finding the largest number $c$ such that $b_\sigma$ is defined for a $\sigma$ with $|\sigma|=c$, or such that some deduction procedure for such a $\sigma$ is defined somewhere, and extending the definition of $\widetilde A$ arbitrarily to all other $b_\tau$'s and deduction procedures for $b_\tau$'s of equal or lesser length.)

If $\varphi_s^A$ cannot be made to produce any incorrect computations for $B$, then we extend our definition of $\widetilde A$ such that for every $\sigma$ with $|\sigma|=f(s)$, $b_\sigma=0$. More importantly, we restrict every deduction procedure that computes such a $b_\sigma$ and that operates under false assumptions to not give any answers. (i.e., to be empty.) We allow all the correct deduction procedures to give correct answers immediately. Finally, we insist that for the rest of the construction, for every deduction procedure that operates under a false assumption about the value of $b_\sigma$ for any $\sigma$ with $|\sigma|=f(s)$, that deduction procedure does not give any answers.

This completes the construction. (The second option will certainly happen infinitely often, and so $\widetilde A$ will be defined everywhere after $\omega$ many steps.)

\vspace{5pt}

\textbf{Part 3}

\vspace{5pt}

Finally, we prove that if $T$ is ill-founded, then $A\ngeq_gB$. To do this, we must demonstrate that for any $\varphi$ that had not been able to be extended to make a false claim about $B$, there exists a generic oracle, $(A)$, for $A$ such that $\varphi^{(A)}$ does not have density-1 domain. (At the end of Part 1, we verified that if $T$ is well-founded, then $A\geq_gB$. Also, if $\varphi$ had been able to be extended to make a false claim about $B$, we would have done that, and then it would certainly not witness $A\geq_gB$.)

We remind the reader that in this part of the proof, we are allowed to work omnisciently, and in particular we will be allowed to know every choice that was made in Part 2, and also to know an example of an infinite path through $T$.

Assume $T$ is ill-founded. Let $Q$ be an infinite path through $T$. Assume that at stage $s$ of the construction $\varphi_s^A$ could not have been made to produce any incorrect computations for $B$.

Let $s_0=s$, and for each $i>0$, let $s_i$ be the $i$th stage $t$ after stage $s$ such that $\varphi_t^A$ was not able to be made to produce any incorrect computations for $B$ at stage $t$. Let $\sigma^i$ be the initial segment of $Q$ such that $|\sigma^i|=s_i$. 

Now, we define $(A)$ to be the oracle for $A$ that does not give answers on the coding locations of any of the $b_{\sigma^i}$, and also does not include the answers (the 1's) from any of the deduction procedures used for deducing $b_{\sigma^0}$, or from any of the deduction procedures used for deducing $b_{\sigma^i}$, unless those deduction procedures have sufficiently large $m$ values that they depend on $b_{\sigma^{i-1}}$.

Then we claim that $(A)$ is a generic oracle for $A$, and that $\varphi_s^{(A)}$ does not have density-1 domain.

To show that $(A)$ is a generic oracle for $A$, we show that the set of places where $(A)$ does not give answers is density-0.

There are finitely many (in fact, at most one) $\sigma^i$ of each possible length. Also, for each length $n$, there is at most one deduction procedure whose answers we erased whose shortest queried string has length $n$. This is because only the correct deduction procedures give any answers that need to be erased, and because we only erase deduction procedures that do not require knowledge of the previous $b_{\sigma^i}$. Therefore the pieces of information that we excluded from $(A)$ were coded into smaller and smaller portions of the corresponding $P_j$. Thus, for every $m$, there is a last $j$ such that the last $\frac1{2^m}$ of $P_j$ was excluded from $(A)$, and after that point, the density of the domain of $(A)$ never again drops below $\frac1{2^{m-1}}$. (Also, it goes above $\frac1{2^{m-1}}$ by the end of $P_{j+1}$.)

Finally, we show that $\varphi_s^{(A)}$ does not give outputs in any of the locations where the $b_{\sigma^i}$ are coded in $B$.

The proof of this fact is that, for any $i$, any finite subset of the information in $(A)$ is a partial oracle that could be extended to a partial oracle for a different set $A_1$ which would also be consistent with the requirements imposed at the beginning of stage $s$ of the construction, and such that the value of $b_{\sigma^i}$ in $A_1$ was different from the value of $b_{\sigma^i}$ in $A$. (This statement will be proved by induction shortly.) Therefore, if  $\varphi_s^{(A)}$ gives any outputs on any of the the $b_{\sigma^i}$, then at stage $s$, we would have been able to extend our condition on $A$ to a condition specifying enough of $A_1$ to cause $\varphi_s$ to produce an incorrect computation for $B$, contradicting our assumption on $\varphi_s$.

\vspace{5pt}

We conclude our proof by proving, by induction on $i$, that any finite subset of the information in $(A)$ is a partial oracle that could be extended to a partial oracle for a different set $A_1$ which would also be consistent with the requirements imposed at the beginning of stage $s$ of the construction, and such that the value of $b_{\sigma^i}$ in $A_1$ was different from the value of $b_{\sigma^i}$ in $A$.

Recall that by the construction of $\widetilde A$, and by the assumption on $\sigma^i$, for every $i$, $b_{\sigma^i}=0$

\vspace{5pt}

Case 1: $i=0$

Assume that we have seen a finite number of the pieces of information in $(A)$. Then we have not seen any of the locations where $b_{\sigma^0}$ is coded, and we have also seen nothing but 0's from all of the deduction procedures for computing $b_{\sigma^0}$. At stage $s$, the value of $b_{\sigma^0}$ had not yet been decided, and there were no conditions yet on the deduction procedures for computing $b_{\sigma^0}$, except for the requirements that certain deduction procedures that operated under false assumptions were not allowed to give any outputs. Furthermore, none of the deduction procedures operating under false assumptions about $b_{\sigma^0}$ ever give answers.

Therefore, it would be consistent with what we have seen so far of $(A)$ and with the requirements imposed at the beginning of stage $s$ to have $b_{\sigma^0}$ equal to 1, and then to fill in $A_1$ with $b_{\sigma^0}=1$, and with the deduction procedures that compute $b_{\sigma^0}$ having 1's in the correct locations in the next coding sets that we have not yet looked at.

Since none of the deduction procedures that operated under the assumption $b_{\sigma^0}=1$ have given answers in the finite amount of $(A)$ that we have seen, we may freely extend them in $A_1$ to give correct outputs in $A_1$. (Notice that this is consistent with the conditions imposed on the construction at the beginning of stage $s$, because the condition that those deduction procedures never give outputs was imposed at the end of stage $s$.)

Case 2: $i>0$

Assume again that we have seen a finite number of the pieces of information in $(A)$. Then, again, we have not seen any of the locations where $b_{\sigma^i}$ is coded, and we have also seen nothing but 0's from all except a few of the deduction procedures for computing $b_{\sigma^i}$. Among these deduction procedures, the only ones from which we have seen any 1's are those which operate under true assumptions, and which also depend on the value of $b_{\sigma^{i-1}}$.

By induction, it is consistent (both with what we have seen of $(A)$, and with the condition on the construction at stage $s$) that $b_{\sigma^{i-1}}$ could have the opposite of its actual value. Thus, it would be consistent to fill in $A_1$ to have the incorrect value for $b_{\sigma^{i-1}}$, and then to also have the incorrect value for $b_{\sigma^i}$, and then to have the ``relevant" deduction procedure then place 1's into the next relevant location. (Here, relevant means operating under the assumptions that are true of $A_1$, but potentially false of $A_0$.)

Again, this does not contradict any of our requirements, since none of the deduction procedures that use $b_{\sigma^i}=1$ have given any outputs yet, so the ones that operate under assumptions that are correct in $A_1$ can still be made to give outputs which are correct in $A_1$.

This concludes our proof of $\mathbf{\Pi_1^1}$-completeness of $\geq_g$.

\end{proof}

We briefly mention one thing about this proof.

The fact that a deduction procedure operating under false assumptions about $b_{\sigma^i}$ never gives incorrect answers is very important. This is what allows us to ensure that it is consistent with any finite oracle that $b_{\sigma^i}$ has the opposite value. However, it is also important that those deduction procedures never give correct answers, because otherwise it might be possible to deduce the value of $b_{\sigma^{i+1}}$ without knowing the value of $b_{\sigma^i}$, simply by seeing every possible deduction procedure for $b_{\sigma^{i+1}}$ give the same answer. It is also important that sometimes, incorrect deduction procedures can give correct answers, because otherwise, seeing a deduction procedure give a correct answer would tell us that all of its assumptions were correct, and it is also important that sometimes incorrect deduction procedures give incorrect answers, because otherwise, seeing any answer from any deduction procedure would be sufficient to know that the answer was correct. By using all three sorts of deductions (incorrect, correct, and nonresponsive), we have enough leeway to ensure that if the opponent is clever enough to avoid making mistakes, then we can force him to not give answers.



\begin{thebibliography}{1}

\bibitem{Gur}
Yuri Gurevich, \emph{Average case completeness}, J. Comput. System Sci.
  \textbf{42} (1991), no.~3, 346--398, Twenty-Eighth IEEE Symposium on
  Foundations of Computer Science (Los Angeles, CA, 1987).

\bibitem{I}
Gregory Igusa, \emph{Nonexistence of minimal pairs for generic computability},
  To appear in J. Symbolic Logic, see also http://arxiv.org/abs/1202.2560.

\bibitem{stuff}
Carl~G. Jockusch, Jr. and Paul~E. Schupp, \emph{Generic computability, {T}uring
  degrees, and asymptotic density}, J. Lond. Math. Soc. (2) \textbf{85} (2012),
  no.~2, 472--490.

\bibitem{Kap}
Ilya Kapovich, Alexei Miasnikov, Paul Schupp, and Vladimir Shpilrain,
  \emph{Generic-case complexity, decision problems in group theory, and random
  walks}, J. Algebra \textbf{264} (2003), no.~2, 665--694.

\bibitem{sacks}
Gerald~E. Sacks, \emph{Higher recursion theory}, Perspectives in Mathematical
  Logic, Springer-Verlag, Berlin, 1990. \MR{1080970 (92a:03062)}

\bibitem{Soa}
Robert~I. Soare, \emph{Recursively enumerable sets and degrees}, Perspectives
  in Mathematical Logic, Springer-Verlag, Berlin, 1987, A study of computable
  functions and computably generated sets.

\bibitem{Sol}
Robert~M. Solovay, \emph{Hyperarithmetically encodable sets}, Trans. Amer.
  Math. Soc. \textbf{239} (1978), 99--122.

\end{thebibliography}

\end{document}